\providecommand{\logmean}[1]{\mean{#1}^{\log}}
\providecommand{\Rnnpos}{{\setR^{n \times n}_{>0}}}
\renewcommand{\rho}{\varrho}
\title[Global Maximal Regularity for equations with degenerate weights]{ Global Maximal Regularity for equations with degenerate weights}
\author[Balci]{Anna Kh.~Balci}
\address{Fakult\"at für Mathematik, University Bielefeld,
Universit\"atsstrasse 25, 33615 Bielefeld, Germany}
\email{akhripun@math.uni-bielefeld.de}
\author[Byun]{Sun-Sig Byun}
\address{Department of Mathematical Sciences and Research Institute of Mathematics, Seoul National University, 1, Gwanak-ro, Gwanak-gu, Seoul, Republic of Korea}
\email{byun@snu.ac.kr}
\author[Diening]{Lars Diening}
\address{Fakult\"at für Mathematik, University Bielefeld,
Universit\"atsstrasse 25, 33615 Bielefeld, Germany}
\email{lars.diening@uni-bielefeld.de}
\author[Lee]{Ho-Sik Lee}
\address{Department of Mathematical Sciences, Seoul National University, 1, Gwanak-ro, Gwanak-gu, Seoul, Republic of Korea }
\email{lshnsu92@snu.ac.kr}
\begin{document}

\begin{abstract}
  In this paper we are concerned with global maximal regularity estimates for elliptic equations with degenerate weights. We consider both the linear case and the non-linear case.  We show that higher integrability of the gradients can be obtained by imposing a local small oscillation condition on the weight and a local small Lipschitz condition on the boundary of the domain.  Our results are new  in the linear and non-linear case. We show by example that the relation between the exponent of higher integrability and the smallness parameters is sharp even in the linear or the unweighted case.
\end{abstract}

\keywords{degenerate weight; Lipschitz boundary; $p$-Laplacian; regularity of solutions}

\thanks{Anna Kh.Balci and Lars Diening thank for the financial support the Deutsche Forschungsgemeinschaft (DFG, German Science Foundation) through SFB 1283/2 2021-317210226 at Bielefeld University.\\Sun-Sig Byun thanks for the financial support from the National Research Foundation of Korea (NRF) through NRF-2021R1A4A1027378 at Seoul National University, Ho-Sik Lee thanks for the financial support from the National Research Foundation of Korea (NRF) through IRTG 2235/NRF-2016K2A9A2A13003815 at Seoul National University.
}

\subjclass[2010]{%
35J92, %  Quasilinear elliptic equations with p-Laplacian
35J70, %  Degenerate elliptic equations
35J25, % Boundary value problems for second-order elliptic equations
35B65, %  Smoothness and regularity of solutions
46E35 %   Sobolev spaces and other smooth functions,  traces, embeddings
}

\maketitle

\numberwithin{theorem}{section}
\numberwithin{equation}{section}

%\tableofcontents

\section{Introduction}
\label{sec:introduction}

We study the following degenerate elliptic equation of the form
\begin{align}\label{eq:lap}
  \begin{alignedat}{3}
    -\divergence(\mathbb{A}(x)\nabla u) &=-\divergence (\mathbb{A}(x)F) &\qquad &\text{in }\Omega,
    \\
    u &=0&\qquad&\text{on }\partial\Omega,
  \end{alignedat}
\end{align}
in the linear case, and of the form
\begin{align}
  \label{eq:plap}
  \begin{alignedat}{3}
    -\divergence(|\setM(x)\nabla u|^{p-2}\setM^2(x)\nabla u) &=-\divergence (|\setM(x)F|^{p-2}\setM^2(x)F) &\qquad &\text{in }\Omega,
    \\
    u &=0 &\qquad&\text{on }\partial\Omega,
  \end{alignedat}
\end{align}
in the non-linear case. We often write $\bbM(x)$ to emphasize the dependence of the weight on~$x$. Here, $\Omega\subset\setR^n$ is a bounded domain with $n\geq 2$, $1<p<\infty$, $F:\Omega\rightarrow\setR^n$ is a given vector-valued function, $\mathbb{M}:\setR^n\rightarrow\setR^{n\times n}$ is a given symmetric and positive definite matrix-valued weight satisfying
\begin{align}
  \label{eq:M}
  \abs{\setM(x)}\, \abs{\setM^{-1}(x)}\leq\Lambda\quad(x\in\setR^n)
\end{align} 
for some constant $\Lambda\geq 1$, where~$\abs{\cdot}$ is the spectral norm, and $\mathbb{A}(x):=\setM^2(x)$.  This condition says that $\bbM$ has a uniformly bounded condition number.  Note that a right-hand side of the form $-\divergence G$ with $G\,:\, \Omega \to \setR^n$ can be immediately rewritten in the above form in terms of~$F$.  Note that~\eqref{eq:lap} is a special case of \eqref{eq:plap} for~$p=2$. The condition~\eqref{eq:M} in this case reads as
\begin{align}
  \label{eq:A}
  \abs{\bbA(x)}\, \abs{\bbA^{-1}(x)} \leq\Lambda^2\quad(x\in\setR^n).
\end{align}
Let us define the scalar weight
\begin{align}
  \label{eq:omega}
  \omega(x)=|\setM(x)|=\sqrt{\abs{\bbA(x)}}.
\end{align}

Suppose that $\omega^p$ is an $\mathcal{A}_p$-Muckenhoupt weight (see Section 2) and $F\in L^{p}_{\omega}(\Omega):=L^p(\Omega,\omega^p\,dx)$, then there exists a unique weak solution $u\in W^{1,p}_{0,\omega}(\Omega)$ of \eqref{eq:plap}, which means
%It is well known that with \eqref{eq:M}, \eqref{eq:omega} and a given nonhomogeneous term $F\colon \Omega\rightarrow\setR^n$ such that $F\in L^{p}_{\omega}(\Omega):=L^p(\Omega,\omega^p\,dx)$, if $\omega^p$ is an $\mathcal{A}_p$-Muckenhoupt weight, then there exists a unique weak solution $u\in W^{1,p}_{0,\omega}(\Omega)$ of \eqref{eq:plap} in the sense that
\begin{align}\label{eq:1st}
	\int_{\Omega}|\setM(x)\nabla u|^{p-2}\setM^2(x)\nabla u\cdot\nabla \phi\,dx=\int_{\Omega}|\setM(x)F|^{p-2}\setM^2(x)F\cdot\nabla \phi\,dx
\end{align}
for all $\phi\in W^{1,p}_{0,\omega}(\Omega)$. Moreover,  we have the following standard energy estimate
\begin{align}\label{eq:energy}
	\int_{\Omega}|\nabla u|^p\omega^p\,dx\leq c\int_{\Omega}|F|^p\omega^p\,dx
\end{align}
with $c=c(n,p,\Lambda)$, see \cite{BDGN,CMP1}.

This paper aims for proving the following global maximal regularity estimates
\begin{align}\label{eq:CZ}
  \int_{\Omega}|\nabla u|^q\omega^q\,dx\leq c\int_{\Omega}|F|^q\omega^q\,dx
\end{align}
for every $q\in(1,\infty)$ in the linear case \eqref{eq:lap}, and for every $q\in[p,\infty)$ in the non-linear case \eqref{eq:plap}. The positive constant $c$ is independent of $F$ and $u$, under minimal extra  assumptions on both the boundary of $\Omega$ and the weight $\setM$ in addition to \eqref{eq:M}. We pay special attention to the optimal dependence of the parameters of the boundary and of the coefficients on $q$. Estimates of this type are also known under the name of global \emph{non-linear Calder\'{o}n-Zygmund estimates}.  Our main results are presented in Theorem~\ref{thm:lin} and~\ref{thm:nonlin}.

We first explain the linear case. When $\mathbb{A}$ is the identity matrix, our result is related to \cite{CZ1, CZ2}, from which the linear Calder\'{o}n-Zygmund theory originates. If $\mathbb{A}(x)$ is assumed to be only measurable, but uniformly elliptic in the sense that
\begin{align}\label{eq:unif.ellip}
	\lambda_{\min}|\xi|^2\leq\left<\mathbb{A}(x)\xi,\xi\right>\leq\lambda_{\max}|\xi|^2
\end{align}
for any $x\in\Omega$ and $\xi\in\setR^n$, a local version of the result \eqref{eq:CZ} is proved in \cite{Me1} for $q\in[2,2+\epsilon)$ for some small $\epsilon>0$. To obtain the estimate for all $q\in(1,\infty)$, one needs additional regularity assumption on $\mathbb{A}$.
In \cite{D1} the assumption $\mathbb{A}\in\setVMO$ is made to prove~\eqref{eq:CZ} for $q\in(1,\infty)$ for $\partial \Omega \in C^{1,1}$ and in \cite{AusQaf02} for $\partial \Omega \in C^1$. A global result on $\setR^n$ is obtained in \cite{IS} and a local result for the case of systems is proved in \cite{DFZ} for~$\bbA \in \setVMO$. The condition $\mathbb{A}\in\setVMO$ is relaxed to a small $\setBMO$ condition. The global results for bounded domains are obtained in a series of papers \cite{B1,BCKW,BW1}.

In this paper we are also interested in the degenerate case, where~\eqref{eq:unif.ellip} fails. The most simple example of which is  $\bbA(x)=|x|^{\pm\epsilon}\identity$ with $\epsilon>0$ small. Instead of \eqref{eq:unif.ellip}, we assume
\begin{align}\label{eq:deg.ellip}
	\Lambda^{-2}\mu(x)|\xi|^2\leq\left<\mathbb{A}(x)\xi,\xi\right>\leq\mu(x)|\xi|^2
\end{align}
where $\mu(x):=\abs{\bbA(x)}=\abs{\bbM(x)}^2=\omega^2(x)$. In \cite{FKS}, it is proved that if $\mu$ belongs to the Muckenhoupt class $\mathcal{A}_2$, then the solution $u$ of \eqref{eq:lap} is H\"{o}lder continuous. Gradient estimates are obtained in \cite{CMP1} under \eqref{eq:deg.ellip}, $\mu\in\mathcal{A}_2$ and a smallness assumption in terms of a weighted $\setBMO$ norm of $\mathbb{A}$. They yield $|F|^q\mu\in L^{1}_{\loc}\Rightarrow |\nabla u|^q\mu\in L^{1}_{\loc}$ for all $q\in(1,\infty)$, including the case $\mu(x)=|x|^{\pm\epsilon}\identity$ for small $\epsilon>0$. The global result is obtained in \cite{P1} and the local result for the case of systems is proved in \cite{CMP2}. In the recent paper \cite{BDGN}, the authors prove a new type of gradient estimates with the implication that $(|F|\omega)^q\in L^{1}_{\loc}\Rightarrow (|\nabla u|\omega)^q\in L^{1}_{\loc}$ for all $q\in(1,\infty)$, assuming \eqref{eq:deg.ellip} and the smallness condition for the $\setBMO$ norm of $\log\mathbb{A}$ as follows:
\begin{align}\label{eq:logBMOA}
\sup_{B\Subset\Omega}\dashint_{B}\lvert\log\mathbb{A}(x)-\mean{\log\mathbb{A}}_{B}|\,dx\leq\frac{\delta}{q}
\end{align}
for some $\delta=\delta(n,p,\Lambda)$, where $\mean{f}_B=\dashint_{B}f\,dx$ for an integrable function $f:\setR^n\rightarrow\setR^{n\times n}_{\sym}$. Here, we can define $\log\mathbb{A}:\setR^n\rightarrow\setR^{n\times n}_{\sym}$, the logarithm of the matrix-valued weight $\mathbb{A}$, since $\mathbb{A}$ is positive definite almost everywhere. This novel $\log$--$\setBMO$ condition of \cite{BDGN} not only includes the degenerate weights of the form $\mathbb{A}(x)=|x|^{\pm\epsilon}\identity$ for small $\epsilon>0$, but also has the optimality in terms of the obtainable integrability exponent $q$. Compared to \cite{CMP1}, where $\mu \,dx$ is treated as a measure, the degenerate weight $\mu$ or better~$\omega$ in \cite{BDGN} plays the role of a multiplier.
Also in this paper we treat~$\omega$ as a multiplier, which seems also important for the optimal dependency of~$q$ on the constants.
We compare the conditions used in \cite{CMP1} and those introduced in \cite{BDGN} in Section~\ref{ssec:comparison-CMP}.

%their $\log$--$\setBMO$ assumption (and also the $\setBMO^2_{\mu}$ assumption in \cite{CMP1}) has a kind of scaling invariant property; see~\cite[Section 1]{BDGN}.

Now, consider the $p$-Laplacian case. If we write
$A(\xi):=|\xi|^{p-2}\xi$ and $\mathcal{A}(x,\xi):=|\setM(x)\xi|^{p-2}\setM^2(x)\xi$, then \eqref{eq:plap} is equivalent to
\begin{align}\label{eq:plap1}
  -\divergence\mathcal{A}(\cdot,\nabla u)=-\divergence\mathcal{A}(\cdot,F).
\end{align}
Writing $\setM^2=\mathbb{A}$ and $\mathcal{A}(\cdot,F)=G$ for $\mathbb{A}:\Omega\rightarrow\setR^{n\times n}_{\sym}$ and $G\,:\, \Omega \to \setR^n$ we can write \eqref{eq:plap1} as
\begin{align}\label{eq:plap2}
  -\divergence\left(\left<\mathbb{A}\nabla u,\nabla u\right>^{\frac{p-2}{2}}\mathbb{A}\nabla u\right)=-\divergence G.
\end{align}
Then $u$ is the minimizer of the following functional:
\begin{align*}
  \mathcal{P}(v)&:=\frac{1}{p}\int_{\Omega}\left<\mathbb{A}\nabla v,\nabla v\right>^{\frac{p}{2}}\,dx-\int_{\Omega}G \cdot\nabla v\,dx\\
                &=\frac{1}{p}\int_{\Omega}|\setM\nabla v|^p\,dx-\int_{\Omega}|\setM F|^{p-2}\setM F\cdot(\setM\nabla v)\,dx.
\end{align*}

If $p\in(1,\infty)$ and $\setM=\identity$, then $A(\nabla u)=\mathcal{A}(\cdot,\nabla u)$. In this case, the H\"{o}lder continuity of $u$ and $\nabla u$ is investigated in \cite{L0,U1}, and the gradient regularity estimates were obtained in \cite{DM,I1}. In the recent years, there have been many research activities for the gradient estimates in terms of $A(\nabla u)$. The $\setBMO$ type estimate with the implication that $G\in\setBMO \Rightarrow A(\nabla u)\in\setBMO$ is shown in \cite{DM} for $p>2$ and \cite{DKS} for $1<p<\infty$. In \cite{DKS}, the implication $G\in C^{0,\alpha}\Rightarrow A(\nabla u)\in C^{0,\alpha}$ for small $\alpha>0$ is proved. A local pointwise estimate is proved in \cite{BCDKS} and extended to the global one in \cite{BCDS}. Estimates in Besov space and Triebel-Lizorkin spaces up to differentiability one for $n=2$ and $p>2$ are shown in \cite{BDW}. Besov space regularity for $\nabla u$ is also considered in \cite{BCGOP,CGP}. The result $A(\nabla u)\in W^{1,2}$ when $\divergence G \in L^2$ is obtained in \cite{CM1} for scalar equations for $p>1$ and for vectorial systems in \cite{CM2} for $p> \frac 32$ and for $p> 2(2-\sqrt{2})\approx 1.1715$ in~\cite{BCDM}. Gradient potential estimates are studied for equations in \cite{KuuMin12, KuuMin13} and for systems in \cite{BY,DM1,KuuMin14,KuuMin18}.

Now, we pay attention to the weighted case. The local version of \eqref{eq:CZ} is proved for $1<p<\infty$, with a uniformly elliptic weight $\setM$ as in~\eqref{eq:unif.ellip} with $\setM\in\setVMO$ in \cite{KZ}. Since~$\setM$ is uniformly elliptic, we have $\omega(x) \eqsim 1$, so the results reduce to the transfer of~$L^q$-regularity from $F$ to $\nabla u$.  The global estimate is obtained in \cite{KZ1} with a $C^{1,\alpha}$ domain $\Omega$ for $\alpha\in(0,1]$. The assumption $\setM\in\setVMO$ has been weakened to the one that $\setM$ has a small $\setBMO$-norm, as shown in \cite{BW2,BWZ,C2,LCZ}. Under similar assumptions it is possible to replace the $L^q$-regularity transfer by $L^q(\sigma\,dx)$-regularity transfer for suitable Muckenhoupt weights~$\sigma$, see \cite{BR,MP0,MP1,P0}. Note that the weight~$\sigma$ is not related to the weight~$\omega$ of the equation.
  
Now, we introduce Lipschitz domains along with our optimal regularity assumption for the boundary of the domain.
\begin{definition}\label{def:lip}
  Let $\delta\in[0,\frac{1}{2n}]$ and $R>0$ be given. Then $\Omega$ is called $(\delta,R)$--Lipschitz if for each $x_0\in\partial\Omega$, there exists a coordinate system $\{x_1,\dots,x_n\}$ and Lipschitz map $\psi:\setR^{n-1}\rightarrow\setR$ such that $x_0=0$ in this coordinate system, and there holds
  \begin{align}\label{eq:lip1}
    \Omega\cap B_R(x_0)=\{x=(x_1,\dots,x_n)=(x',x_n)\in B_R(x_0):x_n>\psi(x')\}
  \end{align}
  and
  \begin{align}\label{eq:psi}
    \|\nabla\psi\|_{\infty}\leq\delta.
  \end{align}
\end{definition}
Imposing a Lipschitz condition for the boundary of the domain appears in many papers, the regularity and the asymptotic behavior of caloric function \cite{ACS}, homogenization \cite{KS}, oblique derivative problem \cite{L01,L02,L3}, H\"{o}lder continuity of solutions for Robin boundary condition \cite{N1}, regularity results for elliptic Dirichlet problem \cite{S1}, Calder\'{o}n-Zygmund estimates \cite{B1,BW2}. We would like to point out that in \cite{KZ1} $C^{1,\alpha}$ regularity with $\alpha\in(0,1]$ is assumed for $\partial\Omega$. It was observed in~\cite{AusQaf02} (in the linear case) that $\partial \Omega \in C^1$ is enough.
Our Lipschitz assumption for the boundary is weaker than both $C^{1,\alpha}$ and $C^1$ assumption on $\partial \Omega$, so Theorem~\ref{thm:lin} and~\ref{thm:nonlin} can be both applied in particular to~$C^{1,\alpha}$ and $C^1$-domains. Our assumption is indeed an optimal one to be discussed in Section~\ref{ssec:sharpness}. The sharp relation between the smallness parameter of the boundary and the integrability exponent~$q$ is,
as far as we know, new in the literature, even in the unweighted, linear case.

%We also discuss the dependency of $q$ for the smallness quantity when we assume  Reifenberg-flatness for $\partial\Omega$.

%The domain $\Omega$ is called the $(\delta,R)$-Reifenberg flat domain if for each $r\in(0,R]$ and each $x\in\partial\Omega$ there exists a coordinate system $\{x_1,\dots,x_n\}$ such that in this coordinate system the origin is at $x$ and satisfies
%\begin{align}\label{eq:reifenberg}
%\{x\in B_r(0):x_n>\delta r\}\subset B_r(0)\cap\Omega\subset\{x\in B_r(0):x_n>\delta r\}.
%\end{align}

Our optimal regularity assumption for $\setM$ is a small $\setBMO$ assumption on its logarithm. This condition is also used in~\cite{BDGN} for the interior estimates.
\begin{definition}
  We say that $\log\setM$ is $(\delta,R)$-vanishing if
  \begin{align}\label{eq:logBMO}
    \lvert\log\setM|_{\setBMO(\setR^n)}:=\sup_{y\in\setR^n}\sup_{0<r\leq R}\dashint_{B_{r}(y)}\lvert\log\setM(x)-\mean{\log\setM}_{B_r(y)}|\,dx\leq\delta.
  \end{align}
\end{definition}

Now, we state the main theorems.
\begin{theorem}[Linear case]\label{thm:lin}
  Define $\omega$ as \eqref{eq:omega}, and assume \eqref{eq:M} and $F\in L^{q}_{\omega}(\Omega)$ for $q\in(1,\infty)$ in \eqref{eq:lap}. Then there exists a constant $\delta=\delta(n,\Lambda)\in(0,\frac{1}{2})$ such that if for some $R$,
  \begin{subequations}
    \begin{align}\label{eq:small}
      \log\mathbb{A}\,\,\,\text{is}\,\,\, \left(\delta\min\left\{\frac{1}{q},1-\frac{1}{q}\right\},R\right)\text{--vanishing and}
    \end{align}
    \begin{align}\label{eq:Omegasmall}		
      \Omega\,\,\text{is}\,\,\left(\delta\min\left\{ \frac{1}{q},1-\frac{1}{q}\right\},R\right)\text{--Lipschitz},
    \end{align}  
  \end{subequations}
  then the weak solution $u\in W^{1,2}_{0,\omega}(\Omega)$ of \eqref{eq:lap} satisfies $\nabla u\in L^{q}_{\omega}(\Omega)$ and we have the estimate
  \begin{align}\label{eq:lin}
    \int_{\Omega}(|\nabla u|\omega)^q\,dx\leq c\int_{\Omega}(|F|\omega)^q\,dx
  \end{align}
  for some $c=c(n,\Lambda,\Omega,q)$.
\end{theorem}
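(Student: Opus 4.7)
The proof follows a Calder\'on--Zygmund scheme tailored to degenerate matrix weights. I would combine the interior comparison estimates of \cite{BDGN} with a new boundary analysis performed in Lipschitz charts and assemble both via a stopping-time argument based on the weighted Hardy--Littlewood maximal function. The role of $\min\{1/q,1-1/q\}$ in \eqref{eq:small}--\eqref{eq:Omegasmall} is that of a duality parameter: since $\bbA$ is symmetric, \eqref{eq:lap} is formally self-adjoint in the pairing $(f,g)\mapsto\int f\cdot g\,dx$, so an estimate $\|\omega\nabla u\|_{L^q}\le C\|\omega F\|_{L^q}$ is equivalent, via testing with the dual solution, to the corresponding estimate with exponent $q'$. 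It therefore suffices to prove the theorem for $q\ge 2$ under smallness $\delta/q$; the range $q\in(1,2)$ then comes for free by duality with $q'\ge 2$, where \eqref{eq:small}--\eqref{eq:Omegasmall} reads with smallness $\delta/q'=\delta(1-1/q)$.

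\textbf{Muckenhoupt calibration and interior estimate.} The small-BMO hypothesis on $\log\bbA$, combined with John--Nirenberg, yields that $\omega^q$ is an $\mathcal{A}_q$-weight with universally bounded characteristic on balls of radius $\le R$, provided $\delta$ is small enough depending only on $n,\Lambda$. Consequently the weighted Hardy--Littlewood maximal operator is bounded on $L^q_\omega$ with constants stable in $q$, and the reverse H\"older inequalities for $\omega$ are quantitative. On any interior ball $B\Subset\Omega$ one has the comparison inequality
\begin{align*}
  \biggl(\dashint_B\bigl(\abs{\nabla(u-v)}\omega\bigr)^2\,dx\biggr)^{1/2}\le c\delta_B\biggl(\dashint_{2B}\bigl(\abs{\nabla u}\omega\bigr)^2\,dx\biggr)^{1/2}+c\biggl(\dashint_{2B}\bigl(\abs{F}\omega\bigr)^2\,dx\biggr)^{1/2},
\end{align*}
where $v$ solves the frozen-coefficient homogeneous equation on $B$ and $\delta_B$ measures the local oscillation of $\log\bbA$ on $2B$; this is exactly the content of the interior theory developed in \cite{BDGN}.

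\textbf{Boundary flattening and boundary comparison.} At $x_0\in\partial\Omega$, assumption \eqref{eq:Omegasmall} yields a bi-Lipschitz change of variables $\Phi$ with $\|D\Phi-I\|_\infty\lesssim\delta/q$ flattening $\partial\Omega\cap B_R(x_0)$. Pushing \eqref{eq:lap} through $\Phi$ produces an equation of the same form on a half-ball with matrix weight $\widetilde{\bbA}=|\det D\Phi|(D\Phi)^{-1}(\bbA\circ\Phi^{-1})(D\Phi)^{-T}$; because the matrix logarithm is Lipschitz near $I$, the bound $\|D\Phi-I\|_\infty\lesssim\delta/q$ guarantees that $\log\widetilde{\bbA}$ still satisfies a small-BMO estimate of the same order. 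On a half-ball $B_r^+$ I would then compare $\tilde u$ with the solution $v$ of the frozen constant-coefficient homogeneous problem $-\divergence(\overline{\bbA}\nabla v)=0$ agreeing with $\tilde u$ on the curved portion of $\partial B_r^+$ and vanishing on the flat face. Classical up-to-the-boundary regularity for constant-coefficient problems gives $\nabla v\in L^\infty(B_{r/2}^+)$ with the expected sup--integral bound, and an adaptation of the arguments in \cite{BDGN} produces the boundary analogue of the interior comparison inequality with $\delta_B$ replaced by $c\delta/q$.

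\textbf{Stopping time and main obstacle.} Both comparison estimates feed a Vitali / good-$\lambda$ argument performed on super-level sets of a suitable weighted maximal function of $\abs{\nabla u}\omega$; the $L^\infty$-bound on $\nabla v$ converts the comparison into a good-$\lambda$ inequality with excess factor $(c\delta)^q$ on the right-hand side, and integration against $\lambda^{q-1}$ then delivers \eqref{eq:lin}. The power $q$ is precisely what matches the smallness budget $\delta/q$ of \eqref{eq:small}--\eqref{eq:Omegasmall} and keeps the iteration convergent. The main obstacle is the boundary comparison step: the matrix weight $\bbA$ may degenerate, the boundary is only Lipschitz rather than $C^1$, and the coefficient oscillation must be tracked quantitatively through both the bi-Lipschitz flattening and the comparison with the frozen homogeneous problem, with constants that match the $1/q$ scaling obtained in \cite{BDGN}. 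Getting this tracking right is what ultimately fixes the sharp threshold $\delta\min\{1/q,1-1/q\}$ in both hypotheses.
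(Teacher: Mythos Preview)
Your duality reduction to $q\ge 2$ is exactly what the paper does, and the two-scale boundary comparison (flatten, then freeze coefficients, then use the Lipschitz estimate for the constant-coefficient reference solution) is close in spirit to the paper's Propositions~\ref{prop:comp}--\ref{prop:comp2} and~\ref{prop:regx}, though the paper freezes first on $\Omega_r$ and only then flattens the boundary.

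The genuine gap is in your ``Stopping time and main obstacle'' paragraph. A Vitali/good-$\lambda$ argument will \emph{not} produce the sharp linear threshold $\delta/q$: as the paper states explicitly in the introduction, redistributional estimates of Iwaniec--Caffarelli--Peral type always incur an \emph{exponential} dependence of the smallness parameter on~$q$. Concretely, the comparison inequality gives control of $\nabla u-\nabla v$ only in $L^2$ with error $\sim\delta$, so Chebyshev converts this into a level-set estimate with excess factor $\sim\delta^2$; after integrating against $\lambda^{q-1}$ one needs $A^q\delta^2<1$ for some fixed $A>1$, i.e.\ $\delta\lesssim A^{-q/2}$. Your claimed excess factor $(c\delta)^q$ does not arise from the standard machinery and you give no mechanism for obtaining it.

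The paper replaces this entire step by a pointwise sharp-maximal-function inequality (Proposition~\ref{prop:sharpmax}),
\[
\mathcal{M}^{\sharp}_2\bigl(\mathcal{V}(\cdot,\nabla z)\bigr)\le c\bigl(\abs{\log\bbM}_{\setBMO}+\|\nabla\psi\|_\infty+\epsilon\bigr)\,\mathcal{M}_{2s}\bigl(\mathcal{V}(\cdot,\nabla z)\bigr)+\text{(lower order)},
\]
obtained from the decay estimate (Proposition~\ref{prop:dec}). One then applies the quantitative Fefferman--Stein inequality $\|f\|_{L^q(\setR^n)}\le cq\,\|\mathcal{M}^{\sharp}_1 f\|_{L^q(\setR^n)}$ of Lemma~\ref{thm:sharp1}, whose \emph{linear} factor~$q$ is precisely what makes the absorption $cq\cdot(\delta/q)<1/2$ work and fixes the threshold $\delta/q$. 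Without this Fefferman--Stein step (and the preceding pointwise oscillation estimate that feeds it), your argument would prove a weaker theorem with $\delta\sim c^{-q}$ in place of $\delta/q$, and the sharpness discussion in Section~\ref{sec:4} would be lost. A secondary point: the paper treats $\omega$ as a multiplier throughout and works with the \emph{unweighted} maximal and sharp maximal operators applied to $\mathcal{V}(\cdot,\nabla z)=|\bbM\nabla z|^{(p-2)/2}\bbM\nabla z$, rather than with a weighted maximal operator on $L^q_\omega$ as you suggest.
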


For the non-linear case, we have the following result.

\begin{theorem}[Non-linear case]\label{thm:nonlin}
Define $\omega$ as \eqref{eq:omega}, and assume \eqref{eq:M} and $F\in L^{q}_{\omega}(\Omega)$ for $q\in [p,\infty)$ in \eqref{eq:plap}. Then there exists a constant $\delta=\delta(n,p,\Lambda)\in(0,\frac{1}{2})$ such that 
if for some $R$,
\begin{align}\label{eq:small2}
\log\setM\,\,\,\text{is}\,\,\,\left(\dfrac{\delta}{q},R\right)\text{--vanishing and }\Omega\,\,\text{is}\,\,\left(\dfrac{\delta}{q},R\right)\text{--Lipschitz},
\end{align}   
then the weak solution $u\in W^{1,p}_{0,\omega}(\Omega)$ of \eqref{eq:plap} satisfies $\nabla u\in L^{q}_{\omega}(\Omega)$ and we have the estimate
\begin{align}\label{eq:nonlin}
\int_{\Omega}(|\nabla u|\omega)^q\,dx\leq c\int_{\Omega}(|F|\omega)^q\,dx
\end{align}
for some $c=c(n,p,\Lambda,\Omega,q)$.
\end{theorem}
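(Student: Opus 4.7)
The plan is to combine the interior log--$\setBMO$ Calder\'on--Zygmund estimate of \cite{BDGN} with a new boundary comparison estimate, and then glue them together by a Vitali/good-$\lambda$ argument on the level sets of the maximal function of $\abs{\setM \nabla u}^p$. Since the interior case with the correct $\delta/q$ scaling is already present in \cite{BDGN}, the whole argument will be driven by an analogous estimate near $\partial \Omega$ and the fact that the same smallness parameter controls both the oscillation of $\log \setM$ and the Lipschitz seminorm of the local graph of $\partial \Omega$.

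First I would reduce to a local picture at a boundary point $x_0 \in \partial \Omega$. Using \eqref{eq:Omegasmall}, I flatten the boundary on $B_R(x_0)$ by the bi-Lipschitz change of variables $y = (x', x_n - \psi(x'))$ which has Jacobian close to $\identity$ (with deviation controlled by $\delta/q$) and maps $\Omega \cap B_R(x_0)$ into the upper half-ball. In these new coordinates the transformed equation is still of the form \eqref{eq:plap2} with a new matrix $\widetilde{\bbA}$ whose $\log$--$\setBMO$ norm is comparable to that of $\log \bbA$ plus a term of order $\delta/q$ coming from the Jacobian. Thus the boundary estimate is reduced to an estimate on an upper half-ball for the transformed equation, with the additional data of a homogeneous Dirichlet condition on the flat part $\{y_n = 0\}$.

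Next I would construct a reference solution $v$ on the half-ball solving \eqref{eq:plap2} with the constant matrix $\bbA_0 := \exp(\mean{\log \widetilde{\bbA}}_{B})$ and the same boundary values as $u$. Substituting $w(z) := v(\setM_0^{-1} z)$ with $\setM_0 = \bbA_0^{1/2}$ turns the equation for $v$ into the unweighted $p$-Laplace equation on a half-ball, for which global boundary Lipschitz estimates $\norm{\setM \nabla v}_\infty \lesssim \bigl(\dashint \abs{\setM \nabla u}^p\,dx\bigr)^{1/p}$ are classical. For the comparison $u - v$ the non-linear monotonicity inequality produces
\begin{align*}
	\dashint_{B^+} \abs{V(\setM \nabla u) - V(\setM \nabla v)}^2\,dx
	\lesssim \dashint_{B^+} \abs{V(\setM F)}^2\,dx
	+ \varepsilon\, \dashint_{B^+} \abs{V(\setM \nabla u)}^2\,dx,
\end{align*}
where $V(\xi) = \abs{\xi}^{(p-2)/2} \xi$ and $\varepsilon = \varepsilon(\delta/q)$ tends to $0$ as $\delta \to 0$, precisely thanks to \eqref{eq:small2} and the smallness of the Jacobian deviation. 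The key inputs here are the matrix-logarithm estimates of \cite{BDGN} that convert $\log$--$\setBMO$ smallness of $\setM$ into pointwise smallness of $\setM/\setM_0 - \identity$ up to a John--Nirenberg tail.

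Given the Lipschitz bound on $\setM \nabla v$ and the smallness of $\setM \nabla (u-v)$, a Vitali/Calder\'on--Zygmund stopping-time argument in the spirit of \cite{CaffarelliPeral} then yields the good-$\lambda$ inequality
\begin{align*}
	\abs{\{ M(\abs{\setM \nabla u}^p) > A \lambda \} \cap \Omega}
	\leq \varepsilon_1\,\abs{\{ M(\abs{\setM \nabla u}^p) > \lambda\}\cap \Omega}
	+ \abs{\{ M(\abs{\setM F}^p) > \eta \lambda \}\cap \Omega}
\end{align*}
for the weighted (or Muckenhoupt-transformed) maximal operator $M$, with $\varepsilon_1 = \varepsilon_1(\delta/q) \to 0$. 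The integrability of $\nabla u$ in $L^q_\omega$ then follows by multiplying the level-set inequality by $\lambda^{q/p - 1}$ and integrating; here one must track the implicit constants so that $\varepsilon_1 \cdot A^{q/p}$ stays bounded away from $1$ when $\varepsilon_1 \lesssim \delta/q$, which is exactly why the $\delta/q$ scaling in \eqref{eq:small2} is both sufficient and, via the example mentioned in the abstract, sharp.

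The main obstacle I expect is the boundary comparison step in the presence of the degenerate matrix weight: the flattening map interacts with $\setM$ and can in principle destroy the smallness of $\log \setM$. Overcoming this requires showing that the Jacobian of the boundary straightening contributes to $\log \widetilde{\bbA} - \mean{\log \widetilde{\bbA}}$ only a term of order $\delta/q$, which is where the \emph{Lipschitz} (as opposed to $C^1$) regularity of $\partial \Omega$ with $\norm{\nabla \psi}_\infty \leq \delta/q$ is used in an essential way, together with the multiplicative structure of the equation that makes $\omega$ behave as a multiplier rather than a measure.
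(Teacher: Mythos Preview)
Your overall strategy---boundary flattening, comparison with a constant-coefficient reference problem, and a good-$\lambda$/Calder\'on--Zygmund iteration in the style of Caffarelli--Peral---would yield \emph{some} higher integrability, but it does not deliver the sharp linear relation between $\delta$ and $1/q$ asserted in \eqref{eq:small2}. The gap is in the last step. In a level-set inequality of the form you write, one integrates against $\lambda^{q/p-1}$ and must have $\varepsilon_1 A^{q/p}<1$ in order to absorb. The amplification constant $A>1$ is fixed by the covering geometry and by the Lipschitz bound for the reference solution; it cannot be sent to $1$. Hence the absorption requires $\varepsilon_1\lesssim A^{-q/p}$, i.e.\ \emph{exponential} smallness of $\delta$ in $q$, not the linear $\varepsilon_1\lesssim \delta/q$ you claim. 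The sentence ``one must track the implicit constants so that $\varepsilon_1\cdot A^{q/p}$ stays bounded away from $1$ when $\varepsilon_1\lesssim\delta/q$'' is exactly where the argument breaks. The paper states this obstruction explicitly in the introduction and avoids the Caffarelli--Peral route for precisely this reason.

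The paper's proof differs in an essential way. Instead of level-set redistribution it establishes a pointwise sharp maximal function estimate for $\mathcal{V}(\cdot,\nabla z)$, where $z$ is a cut-off localization of $u$ (Proposition~\ref{prop:sharpmax}), built from a decay estimate (Proposition~\ref{prop:dec}) and a \emph{two-step} comparison: first freeze $\setM$ to its logarithmic mean on the ball (Proposition~\ref{prop:comp}), then flatten the boundary (Proposition~\ref{prop:comp2}). The $L^q$ bound is then extracted via the Fefferman--Stein inequality $\|f\|_q\le cq\,\|\mathcal{M}^\sharp_1 f\|_q$ (Lemma~\ref{thm:sharp1}), whose constant is \emph{linear} in $q$; this linearity is what makes the absorption in \eqref{eq:main5}--\eqref{eq:main6} succeed with smallness $\delta/q$ rather than $e^{-cq}$. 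A secondary difference: the paper does not merge the flattening Jacobian into a perturbed weight $\widetilde{\bbA}$ and appeal to $\log$--$\setBMO$ smallness of $\widetilde{\bbA}$; it keeps the two error sources---oscillation of $\log\setM$ and $\|\nabla\psi\|_\infty$---separate (Lemma~\ref{lem:diff.A2}, Propositions~\ref{prop:comp} and \ref{prop:comp2}), which is what allows clean tracking of the $q$-dependence.
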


In principle we  use a standard  perturbation argument combined with the regularity of $p$-harmonic functions. This argument is for example developed in \cite{Iwa82} and  \cite{CP}, and used in \cite{KZ1}. However, we modify this technique such that it is possible to obtain optimal estimates in terms of the smallness of oscillation parameter $\lvert\log\setM|_{\setBMO}$ and the boundary regularity parameter $\|\nabla\psi\|_{\infty}$. In particular, we obtain a linear dependence for the reciprocal of the integrability exponent instead of an exponential one. This is one of the main novelties of this paper.

The approach from~\cite{Iwa82} and~\cite{CP} can be reduced to redistributional estimates in terms of maximal operator of the gradient. However this technique always introduces an exponential dependence of~$q$ on the smallness parameter~$\delta$. We avoid this problem by using a qualitative  version of  the global Fefferman-Stein inequality  $\|f\|_{L^q(\setR^n)}\leq c q \|\mathcal{M}^{\sharp}_1 f\|_{L^q(\setR^n)}$ (see \eqref{eq:def.max} in Section 3). The important feature is   the linear dependency on the exponent $q$. This allows us  to extract the sharp dependency of $\lvert\log\setM|_{\setBMO}$ and $\|\nabla\psi\|_{\infty}$.

The interior maximal regularity with optimal constants was already described in \cite{BDGN}. In this paper we extend those results up to the boundary with an optimal dependence and the boundary parameters. To this end, we first use the localization argument adapted to our boundary comparison estimate and provide the pointwise sharp maximal function estimate for the localized function of $u$. As an auxiliary step we provide $C^{1,\alpha}$-regularity and the decay estimates up to boundary for the solutions of the reference problems. To this end, we employ the reflection principle of the reference problems, which is one of the intrinsic property in the divergence type equation, see \cite{M1}.

%\begin{remark}
%$\Omega$ is called a $(\delta,R)$-Reifenberg flat domain if for each $x\in\partial\Omega$ and each $r\in(0,R]$, there exists a coordinate system $\{x_1,\dots,x_n\}$ with the origin at $x$ such that
%\begin{align}\label{eq:reifenberg}
%B_r(0)\cap\{x_n>\delta r\}\subset B_r(0)\cap\Omega\subset B_r(0)\cap\{x_n>-\delta r\}
%\end{align}
%holds. The Reifenberg flat domain allows the fractal structure, so the concept of the domain goes beyond the Lipschitz category. There are rich regularity results with $\Omega$ being a $(\delta,R)$-Reifenberg flat domain, see \textcolor{red}{[REFERENCE]}.

%If $\Omega$ is $(\delta,R)$-Reifenberg flat, then still we can obtain the result \eqref{eq:lin} and \eqref{eq:nonlin}. However, the assumptions \eqref{eq:small} and \eqref{eq:small2} should be changed as follows: \textcolor{red}{for some $\delta_0=\delta_0(n,p,\Lambda)$, $\delta=\frac{\delta_0}{q^M}$ for some $M>1$ depending on $n,p,\Lambda$.}
%\end{remark}

This paper is organized as follows: Section 2 is devoted to the basic notation and results which will be frequently used throughout the paper. In Section 3 we provide the proof of Theorem \ref{thm:lin} and Theorem \ref{thm:nonlin}. In Section 4 we discuss the sharpness of our smallness assumptions \eqref{eq:small} and \eqref{eq:small2}.

\section{Notation and Preliminary Results}

Denote $B_{r}(x)\subset\setR^n$ by the open ball of radius $r>0$ and center $x\in\setR^n$. For a ball $B$, let $r_B$ be the radius and $x_B$ be the center of $B$. For $x=(x_1,\dots,x_n)$ write $B_r^+(x)=B_r(x)\cap\{y=(y_1,\dots,y_n)\in\setR^n:y_n\geq x_n\}$. For an open set $U$ and a function $f$ we abbreviate $\mean{f}_{U}:=\dashint_{U}f(x)\,dx$. We write $\chi_U$ for the characteristic function of the set $U$. Let us denote $c$ by a general constant depending only on $n,p$ and $\Lambda$ which may vary on different occasions. We also write $f\lesssim g$ when $f\leq c g$, and write $f\eqsim g$ when $f\lesssim g$ and $g\lesssim f$ hold. For $1<p<\infty$, $p'=\frac{p}{p-1}$ means the conjugate exponent of $p$.

\subsection{Matrix-valued Weights and Logarithms}

We write $\setR^{n\times n}_{\sym}$ for symmetric, real-valued matrices. We denote $\setR^{n\times n}_{\geq 0}$ by the cone of symmetric, real-valued and positive semidefinite matrices. The collection of positive definite matrices is denoted by $\setR^{n\times n}_{>0}$. For $\mathbb{X}, \mathbb{Y}\in\setR^{n\times n}_{\sym}$, we write $\mathbb{X}\geq\mathbb{Y}$ provided $\mathbb{X}-\mathbb{Y}\in\setR^{n\times n}_{\geq 0}$.

Let $\setM\colon\setR^n\rightarrow\setR^{n\times n}_{\geq 0}$ be a (matrix-valued) weight if $\setM$ is positive definite a.e., and $\omega\colon\setR^n\rightarrow[0,\infty)$ be a (scalar) weight if $\omega$ is positive a.e.. For $\mathbb{L}\in\setR^{n\times n}$, let $|\mathbb{L}|$ denote the spectral norm, which means $|\mathbb{L}|=\sup_{|\xi|\leq 1}|\mathbb{L}\xi|$. If $\bbL$ is symmetric, then $\abs{\bbL} = \sup_{\abs{\xi} \leq 1} \skp{\bbL \xi}{\xi}$.

We consider the matrix exponential $\exp\colon \setR^{n\times n}_{\sym}\rightarrow\setR^{n\times n}_{>0}$, with its unique inverse mapping $\log\colon\setR^{n\times n}_{>0}\rightarrow\setR^{n\times n}_{\sym}$. Thus, we can define $\log\setM\colon\setR^n\rightarrow\setR^{n\times n}_{\sym}$, since $\setM\colon\setR^n\rightarrow\setR^{n\times n}_{\sym}$ is positive definite a.e. We now define the logarithmic means 
\begin{align*}
\mean{\omega}^{\log}_{U}&:=\exp(\mean{\log\omega}_U),\\
\mean{\setM}^{\log}_{U}&:=\exp(\mean{\log\setM}_U)
\end{align*}
for some subset $U\subset\setR^n$. The logarithmic mean has the following compatibility property under taking reciprocal:
\begin{align*}
\left<\dfrac{1}{\omega}\right>^{\log}_{U}=\exp(-\mean{\log\omega}_{U})=\dfrac{1}{\mean{\omega}^{\log}_{U}}.
\end{align*}
Moreover, using the relations $\log(\setM^{-1})=-\log\setM$ and $(\exp(\mathbb{L}))^{-1}=\exp(-\mathbb{L})$, we also obtain
\begin{align*}
\mean{\setM^{-1}}^{\log}_{U}=\exp(-\mean{\log(\setM)}_U)=\left(\exp(\mean{\log(\setM)}_{U})\right)^{-1}=(\mean{\setM}^{\log}_{U})^{-1}.
\end{align*}

\subsection{Weighted Lebesgue Space and Muckenhoupt Weights}

For $1<p<\infty$ and a weight $\omega\in L^p_{\loc}(\setR^n)$ with $\omega^{-1}\in L^{p'}_{\loc}(\setR^n)$, we define the weighted Lebesgue spaces
\begin{align*}
L^p_{\omega}(\setR^n):=\{f:\setR^n\rightarrow\setR\colon\omega f \in L^{p}(\setR^n)\}
\end{align*}
equipped with the norm $\|f\|_{p,\omega}:=\|f\omega\|_p$. In particular, we treat the weight~$\omega$ as a multiplier. The dual space of $L^{p}_{\omega}(\setR^n)$ is $L^{p'}_{1/\omega}(\setR^n)$. Both $L^{p}_{\omega}(\setR^n)$ and $L^{p'}_{1/\omega}(\setR^n)$ are Banach spaces and continuously embedded into $L^1_{\loc}(\setR^n)$. Let $W^{1,p}_{\omega}(\Omega)$ be the weighted Sobolev space which consists of functions $u\in W^{1,1}(\Omega)$ such that $u,|\nabla u|\in L^p_{\omega}(\Omega)$, equipped with the norm $\|u\|_{W^{1,p}_{\omega}(\Omega)}=\|u\|_{L^p_{\omega}(\Omega)}+\|\nabla u\|_{L^p_{\omega}(\Omega)}$. Let $W^{1,p}_{0,\omega}(\Omega)$ denote the subspace of $W^{1,p}_{\omega}(\Omega)$ of functions with zero traces on $\partial\Omega$.

For $1<p<\infty$, a weight $\mu\in L^{1}_{\loc}(\setR^n)$ is called an $\mathcal{A}_p$-Muckenhoupt weight if
\begin{align*}
  [\mu]_{\mathcal{A}_p}:= \sup_{B\subset\setR^n}\left(\dashint_{B}\mu\,dx\right)\left(\dashint_{B}\mu^{-\frac{1}{p-1}}\,dx\right)^{p-1}<\infty.
\end{align*}
If $\mu$ is an $\mathcal{A}_p$-Muckenhoupt weight, then the maximal operator is bounded on $L^{p}(\setR^n,\mu)$ for $1<p<\infty$. We point out some properties for a Muckenhoupt weight related to its logarithmic means. If $\omega^p$ is an $\mathcal{A}_p$-Muckenhoupt weight, then from Jensen's inequality,
\begin{align}
\begin{split}
\label{eq:omegalog}
\left(\dashint_{B}\omega^p\,dx\right)^{\frac{1}{p}}&\leq [\omega^p]^{\frac{1}{p}}_{\mathcal{A}_p}\mean{\omega}^{\log}_{B},\\
\left(\dashint_{B}\omega^{-p'}\,dx\right)^{\frac{1}{p'}}&\leq [\omega^p]^{\frac{1}{p}}_{\mathcal{A}_p}\mean{\omega^{-1}}^{\log}_{B}=\dfrac{[\omega^p]^{\frac{1}{p}}_{\mathcal{A}_p}}{\mean{\omega}^{\log}_{B}}.
\end{split}
\end{align} 
Conversely, if \eqref{eq:omegalog} holds, then $\omega^p$ is an $\mathcal{A}_p$-Muckenhoupt weight, since $\mean{\omega}^{\log}_{B}\mean{\omega^{-1}}^{\log}_{B}=1$ holds.

\subsection{Parametrization of the Boundary}
Let $\Omega$ be $(\delta,4R)$--Lipschitz and $x_0\in\partial\Omega$. Then there exists a Lipschitz map $\psi:\setR^{n-1}\rightarrow\setR$ as in Definition \ref{def:lip}. By translation, without loss of generality we assume $x_0=0$ and $\psi(0)=0$. We define $\Psi:\setR^n\rightarrow \setR^n$ as 
\begin{align}\label{eq:defPsi}
\Psi(x',x_n)=(x',x_n-\psi(x'))\quad\text{for }(x',x_n)\in\setR^n
\end{align}
and so there hold $\Psi(\partial\Omega\cap B_{4R}(0))\subset\{(y',y_n):y_n=0\}$, $\Psi(\overline{\Omega}\cap B_{4R}(0))\subset\{(y',y_n):y_n\geq0\}$ and $\Psi(0)=0$. The mapping $\Psi$ is invertible, with a Lipschitz continuous inverse $\Psi^{-1}$. We easily obtain
\begin{align}\label{eq:J}
\nabla\Psi(x',x_n)=\begin{pmatrix}
I & 0 \\
-\nabla\psi(x') & 1 
\end{pmatrix}=\begin{pmatrix}
1 & 0 & \dots & 0\\
0 & \ddots &  & \vdots\\
\vdots &  &  1  &  0\\
-\partial_{x_1}\psi(x') & \dots  &  -\partial_{x_{n-1}}\psi(x')  &  1
\end{pmatrix}
\end{align}
for $(x',x_n)\in\setR^n$, where the right-hand side of \eqref{eq:J} is an $n\times n$ matrix. In particular, $\det(\nabla\Psi(x))=1$ and so $|\Psi(B)|=|B|$ for each ball $B\subset\setR^n$. Note that $|\identity-(\nabla\Psi)(x)|\leq n\|\nabla\psi\|_{\infty}$.

Now, we provide some geometric properties related to the maps $\psi$ and $\Psi$ which will be used throughout the paper.
\begin{remark}\label{rmk:Psi}
From now on, we implicitly use the following properties. If we assume
\begin{align}\label{eq:Omega}
\Omega\,\,\text{ is }\,\,(\delta,4R)\text{--Lipschitz with }\,\,\delta\in\left[0,\frac{1}{2n}\right]\,\,\text{ and }\,\,R>0,
\end{align}
Then for any induced map $\Psi$ from the Lipschitz map $\psi$ assigned to given $x_0\in\partial\Omega$, we have $\abs{\identity - \nabla \psi}\leq  \frac 12$,
\begin{align}\label{eq:Psi.ball}
  \tfrac{1}{2}B\subset\Psi(B)\subset 2B
\end{align}
for all ball $B\subset\setR^n$, and the following measure density properties also hold:
\begin{align}\label{eq:m.den1}
  \sup_{0<r\leq 4R}\sup_{y\in\Omega}\dfrac{|B_r(y)|}{|\Omega\cap B_r(y)|}\leq 4^n
\end{align}
and
\begin{align}\label{eq:m.den2}
  \inf_{0<r\leq 4R}\inf_{y\in\partial\Omega}\dfrac{|B_r(y)\cap\Omega^c|}{|B_r(y)|}\geq 4^{-n}.   \end{align}
\end{remark}

\subsection{Preliminary Estimates}
\label{sec:prel-estim}

We first consider a weighted Poincar\'{e} inequality with partial zero boundary values. The corresponding mean value version is given in~\cite{BDGN}.

\begin{proposition}[Weighted Poincar\'{e} inequality at boundary]\label{prop:SP}
  Let $1<p<\infty$ and $\theta\in(0,1)$ be such that $\theta p\geq\max\set{1,\frac{np}{n+p}}$. Moreover, let $B_r=B_r(x_0)$ with $x_0\in\Omega$ and $B_{\frac{3}{2}r}(x_0)\not\subset\Omega$. Assume that $\Omega$ is $(\delta,3r)$-Lipschitz with $\delta\in[0,\frac{1}{2n}]$ and that $\omega$ is a weight on $B_{3r}$ with
  \begin{align}
    \label{eq:omega1}
    \sup_{B'\subset B_{3r}}\left(\dashint_{B'}\omega^p\,dx\right)^{\frac{1}{p}}\left(\dashint_{B'}\omega^{-(\theta p)'}\,dx\right)^{\frac{1}{(\theta p)'}}\leq c_1.
  \end{align}
  Then for any $v\in W^{1,p}_{\omega}(B_{2r}\cap\Omega)$ with $v=0$ on $\partial\Omega\cap B_{2r}$,
  \begin{align}
    \label{eq:sp2}
    \left(\dashint_{B_{2r}\cap\Omega}\left|\dfrac{v}{r}\right|^p\omega^p\,dx\right)^{\frac{1}{p}}\leq c\left(\dashint_{B_{2r}\cap\Omega}(|\nabla v|\omega)^{\theta p}\,dx\right)^{\frac{1}{\theta p}}
  \end{align}
  holds with $c=c(n,p,c_1)$.

\end{proposition}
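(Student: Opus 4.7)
The plan is to reduce the inequality to the mean-value weighted Poincaré--Sobolev estimate of~\cite{BDGN} via a zero extension across $\partial\Omega$. First, I extend $v$ by zero outside $\Omega$ to a function $\tilde v$ on $B_{2r}$. Since $v$ has zero boundary values on $\partial\Omega\cap B_{2r}$ and $\Omega\cap B_{2r}$ is locally the supergraph of a Lipschitz function with slope $\leq\delta\leq\frac{1}{2n}$, this extension lies in $W^{1,\theta p}_{\omega}(B_{2r})$ with $\nabla\tilde v=\chi_{\Omega}\nabla v$ almost everywhere (the weighted $L^{\theta p}$-integrability of $\nabla v$ follows from $v\in W^{1,p}_{\omega}$ via Hölder's inequality on the bounded domain $B_{2r}\cap\Omega$). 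Second, the hypothesis $B_{\frac{3}{2}r}(x_0)\not\subset\Omega$ gives a boundary point $y_0\in\partial\Omega$ with $\abs{x_0-y_0}\leq \tfrac{3r}{2}$, so that $B_{r/2}(y_0)\subset B_{2r}(x_0)$; the measure density~\eqref{eq:m.den2} applied at $y_0$ then yields $\abs{B_{2r}(x_0)\setminus\Omega}\gtrsim \abs{B_{2r}(x_0)}$, so the zero set $E:=\{\tilde v=0\}\cap B_{2r}$ occupies a definite fraction of $B_{2r}$.

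Next, I apply the mean-value weighted Poincaré--Sobolev inequality from~\cite{BDGN}---whose hypothesis is exactly the Muckenhoupt-type bound~\eqref{eq:omega1}---to $\tilde v$ on $B_{2r}$, obtaining a bound on $\left(\dashint_{B_{2r}}\abs{\tilde v-\mean{\tilde v}_{B_{2r}}}^p\omega^p\right)^{1/p}$ by $cr$ times $\left(\dashint_{B_{2r}}(\abs{\nabla\tilde v}\omega)^{\theta p}\right)^{1/(\theta p)}$. Since $\tilde v\equiv 0$ on $E$, the identity $\mean{\tilde v}_{B_{2r}}=\dashint_{E}\bigl(\mean{\tilde v}_{B_{2r}}-\tilde v\bigr)\,dx$ combined with $\abs{E}\gtrsim \abs{B_{2r}}$ yields $\abs{\mean{\tilde v}_{B_{2r}}}\leq c\dashint_{B_{2r}}\abs{\tilde v-\mean{\tilde v}_{B_{2r}}}\,dx$. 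Inserting Hölder's inequality for this last average and using the standard $A_p$-bound $\bigl(\dashint\omega^p\bigr)^{1/p}\bigl(\dashint\omega^{-p'}\bigr)^{1/p'}\leq c_1$---which follows from~\eqref{eq:omega1} because $(\theta p)'\geq p'$ and Jensen's inequality---allows me to control $\abs{\mean{\tilde v}_{B_{2r}}}^p\,\dashint_{B_{2r}}\omega^p$ by the mean-value weighted oscillation of $\tilde v$.

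The elementary bound $\abs{\tilde v}^p\leq 2^{p-1}\bigl(\abs{\tilde v-\mean{\tilde v}_{B_{2r}}}^p+\abs{\mean{\tilde v}_{B_{2r}}}^p\bigr)$, integrated against $\omega^p$ and combined with the two previous estimates, gives the desired weighted Poincaré bound for $\tilde v$ on $B_{2r}$ without the mean term. Using $\tilde v=\chi_{\Omega}v$, $\nabla\tilde v=\chi_{\Omega}\nabla v$, and the complementary measure density~\eqref{eq:m.den1} to pass between averages over $B_{2r}$ and $B_{2r}\cap\Omega$ then produces~\eqref{eq:sp2}. The main obstacle is the input itself, namely confirming that condition~\eqref{eq:omega1} is exactly the hypothesis of the mean-value weighted Poincaré--Sobolev inequality of~\cite{BDGN}, and that the zero extension genuinely lies in $W^{1,\theta p}_{\omega}(B_{2r})$---the latter requires the standard but somewhat delicate fact that, on the Lipschitz portion of the boundary, functions in $W^{1,p}_{\omega}$ with zero trace admit a zero extension whose weak gradient is the zero extension of $\nabla v$.
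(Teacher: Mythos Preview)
Your proposal is correct and follows essentially the same approach as the paper's own proof: zero-extend $v$ across $\partial\Omega$, invoke the measure density estimates to ensure the zero set has comparable measure, apply the mean-value weighted Poincar\'e--Sobolev inequality from~\cite{BDGN} (Proposition~3 there), and then eliminate the mean via the identity $\mean{\tilde v}_{B_{2r}}=\dashint_E(\mean{\tilde v}_{B_{2r}}-\tilde v)$ together with H\"older and the $A_p$-type bound obtained from~\eqref{eq:omega1} via $(\theta p)'\geq p'$. The paper's argument is organized slightly differently---it writes the extra term as $\mean{v}_{B_{2r}}-\mean{v}_A$ with $A=B_{2r}\setminus\Omega$ and bounds it directly---but the substance is identical, and you are in fact more explicit than the paper about justifying the zero extension in the weighted Sobolev space and about deducing the $A_p$-bound from~\eqref{eq:omega1}.
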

\begin{proof}
  Since $v\in W^{1,p}_{\omega}(\Omega_{2r})$ with $v=0$ on $\partial\Omega\cap B_{2r}$, we can take the zero extension of $v$ on the set $B_{2r}\setminus\Omega$. Since $B_{\frac{3r}{2}}(x_0)\not\subset\Omega$, by \eqref{eq:m.den1} and \eqref{eq:m.den2} in Remark \ref{rmk:Psi}, for $A:=B_{2r}\setminus\Omega$, $|A|\eqsim|B_{2r}\cap\Omega|$ holds. Then $\mean{v}_{A}=0$ holds, and so by Remark \ref{rmk:Psi}, Proposition 3 in \cite{BDGN} and Jensen's inequality, we have
  \begin{align*}
    \left(\dashint_{B_{2r}\cap\Omega}\left|\dfrac{v}{r}\right|^p\omega^p\,dx\right)^{\frac{1}{p}}
    &\lesssim \left(\dashint_{B_{2r}}\left|\dfrac{v-\mean{v}_{B_{2r}}}{r}\right|^p\omega^p\,dx\right)^{\frac{1}{p}}+\left(\dashint_{B_{2r}}\left|\dfrac{\mean{v}_{B_{2r}}-\mean{v}_A}{r}\right|^p\omega^p\,dx\right)^{\frac{1}{p}}
    \\
    &\lesssim \left(\dashint_{B_{2r}}(|\nabla v|\omega)^{\theta p}\,dx\right)^{\frac{1}{\theta p}}+\left[\dashint_{B_{2r}}\left(\dashint_{A}\left|\dfrac{v(y)-\mean{v}_{B_{2r}}}{r}\right|\,dy\right)^p\omega^p\,dx\right]^{\frac{1}{p}}.
  \end{align*}
  Here, using $|A|\eqsim|B_{2r}\cap\Omega|\eqsim|B_{2r}|$, H\"{o}lder's inequality, \eqref{eq:omega1} and Proposition 3 in \cite{BDGN}, it follows that
  \begin{align*}
    \lefteqn{\dashint_{B_{2r}}
    \left(\dashint_{A}\left|\dfrac{v(y)-\mean{v}_{B_{2r}}}{ r}\right|\,dy\right)^p\omega(x)^p\,dx } \qquad
    &
      % \lesssim \dashint_{B_{2r}} \left(\dashint_{B_{2r}}\left|\dfrac{v(y)-\mean{v}_{B_{2r}}}{r} \right|\dfrac{\omega(y)}{\omega(y)}\,dy\right)^p\omega(x)^p\,dx
    \\
    &\quad\quad\lesssim \dashint_{B_{2r}}\left[\left(\dashint_{B_{2r}}\left|\dfrac{v(y)-\mean{v}_{B_{2r}}}{r}\right|^p\omega(y)^p\,dy\right)^{\frac{1}{p}}\left(\dashint_{B_{2r}}\omega(y)^{-p'}\,dy\right)^{\frac{1}{p'}}\right]^p\omega(x)^p\,dx\\
    &\quad\quad\eqsim\left(\dashint_{B_{2r}}\left|\dfrac{v-\mean{v}_{B_{2r}}}{r}\right|^p\omega^p\,dy\right)\left(\dashint_{B_{2r}}\omega^{-p'}\,dy\right)^{p-1}\left(\dashint_{B_{2r}}\omega^p\,dx\right)\\
    &\quad\quad\lesssim\left(\dashint_{B_{2r}}(|\nabla v|\omega)^{\theta p}\,dx\right)^{\frac{1}{\theta}}.
  \end{align*}
  Now, since $v=|\nabla v|=0$ on $B_{2r}\setminus\Omega$ and $|B_{2r}\cap\Omega|\eqsim|B_{2r}|$, we have \eqref{eq:sp2}.
\end{proof}

Let us collect a few auxiliary results from~\cite{BDGN} that will be used later.
It follows from~\cite[(3.24)]{BDGN} with $\omega = \abs{\bbM}$ that
\begin{align}
  \label{eq:aux-mean1}
  \begin{aligned}
    \Lambda^{-1} \logmean{\abs{\bbM}}_B &\leq \abs{\logmean{\bbM}_B} \leq \logmean{\abs{\bbM}}_B,
    \\
    \Lambda^{-2} \logmean{\abs{\bbA}}_B &\leq \abs{\logmean{\bbA}_B} \leq \logmean{\abs{\bbA}}_B.
  \end{aligned}
\end{align}
Moreover, by monotonicity of the scalar versions of $\exp$ and $\log$ we have
\begin{align}
  \label{eq:aux-mean2}
  \logmean{\abs{\bbA}}_B &\leq \mean{\abs{\bbA}}_B.
\end{align}

\begin{lemma}{\cite[Lemma 4]{BDGN}}\label{lem:M.trans.omega}
  For a matrix-valued weight $\mathbb{M}$ and $\omega=|\mathbb{M}|$ we have
  \begin{align*}
    \dashint_{B}\lvert\log\omega(x)-\left<\log\omega\right>_{B}|\,dx\leq 2\dashint_{B}\lvert\log\mathbb{M}(x)-\left<\log\mathbb{M}\right>_{B}|\,dx
  \end{align*}
  and so $\lvert\log\omega|_{\setBMO(B)}\leq 2\lvert\log\mathbb{M}|_{\setBMO(B)}$.
\end{lemma}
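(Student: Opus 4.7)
The plan is to reduce the scalar $\setBMO$ oscillation of $\log\omega$ to the matrix $\setBMO$ oscillation of $\log\bbM$ by choosing the right scalar constant and applying a Weyl--type eigenvalue inequality. Recall the standard bound
\begin{align*}
  \dashint_B \abs{f-\mean{f}_B}\,dx \leq 2\dashint_B\abs{f-c}\,dx
\end{align*}
valid for any constant $c$, so it suffices to produce a single scalar $c_B$ for which
\begin{align*}
  \abs{\log\omega(x) - c_B} \leq \abs{\log\bbM(x) - \mean{\log\bbM}_B}
\end{align*}
holds pointwise a.e. on $B$; integrating this then immediately yields the claim.

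The natural candidate is $c_B := \lambda_{\max}(\mathbb{L}_B)$, where $\mathbb{L}_B := \mean{\log\bbM}_B \in \setR^{n\times n}_{\sym}$. Since $\bbM(x)$ is symmetric and positive definite almost everywhere, functional calculus gives
\begin{align*}
  \log\abs{\bbM(x)} = \log\lambda_{\max}(\bbM(x)) = \lambda_{\max}(\log\bbM(x)),
\end{align*}
i.e., $\log\omega(x) = \lambda_{\max}(\mathbb{X}(x))$ with $\mathbb{X}(x) := \log\bbM(x)$. The pointwise inequality to be established therefore reduces to
\begin{align*}
  \abs{\lambda_{\max}(\mathbb{X}(x)) - \lambda_{\max}(\mathbb{L}_B)} \leq \abs{\mathbb{X}(x) - \mathbb{L}_B},
\end{align*}
with the spectral norm on the right. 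This is a consequence of the variational formula $\lambda_{\max}(\mathbb{Y}) = \sup_{\abs{\xi}=1}\skp{\mathbb{Y}\xi}{\xi}$ available for symmetric $\mathbb{Y}$: for every unit vector $\xi$,
\begin{align*}
  \skp{\mathbb{X}(x)\xi}{\xi} - \lambda_{\max}(\mathbb{L}_B) \leq \skp{\mathbb{X}(x)\xi}{\xi} - \skp{\mathbb{L}_B\xi}{\xi} \leq \abs{\mathbb{X}(x)-\mathbb{L}_B};
\end{align*}
taking the supremum over $\xi$ gives one direction, and swapping the roles of $\mathbb{X}(x)$ and $\mathbb{L}_B$ gives the reverse.

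Combining the $\setBMO$ constant-replacement trick with this pointwise inequality proves the first assertion, and the $\setBMO$ seminorm bound follows by taking the supremum over balls contained in $B$. The argument is essentially algebraic and I do not foresee any serious obstacle: the only nontrivial ingredient is the identity $\log\abs{\bbM} = \lambda_{\max}(\log\bbM)$, which uses positive definiteness of $\bbM$ so that all eigenvalues lie in $(0,\infty)$, where the scalar logarithm is monotone.
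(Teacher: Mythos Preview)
Your proof is correct. The paper does not supply its own argument for this lemma; it simply cites \cite[Lemma~4]{BDGN}, so there is no proof in the present paper to compare against. Your approach---using the constant-replacement inequality for the mean oscillation together with the pointwise identity $\log\omega=\lambda_{\max}(\log\bbM)$ and the $1$-Lipschitz property of $\lambda_{\max}$ in the spectral norm (a Weyl-type bound)---is exactly the natural route and is essentially the argument given in the cited reference.
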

The next results provides a qualitative John-Nirenberg type inequality.
\begin{lemma}{\cite[Proposition 5]{BDGN}}\label{lem:BMOM}
  There exist constants $\kappa_1=\kappa_1(n,\Lambda)>0$ and $c_2=c_2(n,\Lambda)>0$ such that the following holds: If $t\geq 1$ and $\setM$ is a matrix-valued weight with $\lvert\log \setM|_{\setBMO(B)}\leq\frac{\kappa_1}{t}$, then we have
  \begin{align*}
    \left(\dashint_{B}\left(\dfrac{|\setM(x)-\mean{\setM}^{\log}_{B}|}{|\mean{\setM}^{\log}_{B}|}\right)^t\,dx\right)^{\frac{1}{t}}\leq c_2t\lvert\log \setM|_{\setBMO(B)}.
  \end{align*}
  The same holds with $\omega$ instead of $\setM$.
\end{lemma}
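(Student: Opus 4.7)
The plan is to reduce the matrix-valued statement to a single scalar John--Nirenberg estimate by means of a scale-invariant Lipschitz bound for the matrix exponential. Throughout, write $H(x) := \log\setM(x)$, $\bar H := \mean{\log\setM}_B$, $\bar\setM := \mean{\setM}^{\log}_B = \exp(\bar H)$, and introduce the scalar deviation function $g(x) := \abs{H(x) - \bar H}$.

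The first step is to establish the pointwise, scale-invariant bound $\abs{\setM(x) - \bar\setM}/\abs{\bar\setM} \leq g(x)\,e^{g(x)}$. Starting from the symmetric Duhamel identity
\begin{align*}
  e^A - e^B = \int_0^1 \!\!\int_0^1 e^{sM(t)}\,(A-B)\,e^{(1-s)M(t)}\, ds\, dt, \qquad M(t) := (1-t)B + tA,
\end{align*}
and using $\abs{e^{sM}} = e^{s\lambda_{\max}(M)}$ for symmetric $M$ together with the convexity of $\lambda_{\max}$, one obtains the refined bound $\abs{e^A - e^B} \leq \abs{A-B}\,e^{\max(\lambda_{\max}(A),\,\lambda_{\max}(B))}$. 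Specialising to $A = H(x)$, $B = \bar H$, dividing by $\abs{\bar\setM} = e^{\lambda_{\max}(\bar H)}$, and invoking Weyl's inequality $\lambda_{\max}(H(x)) - \lambda_{\max}(\bar H) \leq \abs{H(x) - \bar H} = g(x)$ yields the claim. Notably, no appeal to the $\Lambda$-condition is needed here; the replacement of $\abs{\cdot}$ by $\lambda_{\max}(\cdot)$ in the exponent is what makes the bound dimensionless in $\setM$.

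For the second step, I would observe that $g$ is a scalar $\setBMO$-function with $\lvert g\rvert_{\setBMO(B)} \leq 2\lvert\log\setM\rvert_{\setBMO(B)}$, by a triangle-inequality argument identical in spirit to Lemma~\ref{lem:M.trans.omega}. Classical scalar John--Nirenberg then supplies two ingredients: the moment estimate $(\dashint_B g^{2t}\,dx)^{1/(2t)} \leq c\, t\, \lvert\log\setM\rvert_{\setBMO(B)}$, and, under the smallness $t\,\lvert\log\setM\rvert_{\setBMO(B)} \leq \kappa_1$ for a sufficiently small universal $\kappa_1 = \kappa_1(n,\Lambda)$, the exponential integrability $\dashint_B e^{2t g(x)}\,dx \leq c$. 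Combining the pointwise bound with Cauchy--Schwarz applied to the product $g(x)\,e^{g(x)}$ gives
\begin{align*}
  \Bigl(\dashint_B \Bigl(\tfrac{\abs{\setM-\bar\setM}}{\abs{\bar\setM}}\Bigr)^t dx\Bigr)^{1/t} \leq \Bigl(\dashint_B g^{2t}\,dx\Bigr)^{1/(2t)} \Bigl(\dashint_B e^{2tg}\,dx\Bigr)^{1/(2t)} \leq c_2\, t\, \lvert\log\setM\rvert_{\setBMO(B)},
\end{align*}
which is the desired inequality. The scalar case ($\omega = \abs{\setM}$ in place of $\setM$) follows either by re-running the argument with scalars or by first applying Lemma~\ref{lem:M.trans.omega} to pass from $\lvert\log\setM\rvert_{\setBMO(B)}$ to $\lvert\log\omega\rvert_{\setBMO(B)}$.

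The main obstacle I anticipate is the first step: obtaining the clean, scale-invariant bound after division by $\abs{\bar\setM}$. The textbook inequality $\abs{e^A - e^B} \leq \abs{A-B}\,e^{\max(\abs{A},\abs{B})}$ is insufficient, because $e^{\abs{\bar H}}$ is not comparable to $\abs{\bar\setM} = e^{\lambda_{\max}(\bar H)}$ without losing a factor that depends on the overall magnitude of $\bar\setM$. The multiplicative identity $\abs{e^{sM}}\cdot\abs{e^{(1-s)M}} = e^{\lambda_{\max}(M)}$ (rather than $e^{\abs{M}}$), which is available only through the symmetric Duhamel representation, is the technical key that lets the argument close with an absolute constant.
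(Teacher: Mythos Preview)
The paper does not give its own proof of this lemma; it simply cites \cite[Proposition~5]{BDGN}. Your argument is correct and self-contained. The pointwise bound $\abs{\setM-\bar\setM}/\abs{\bar\setM}\le g\,e^{g}$ via the symmetric Duhamel formula and $\abs{e^{sM}}\abs{e^{(1-s)M}}=e^{\lambda_{\max}(M)}$ is the right technical point, and your closing step with John--Nirenberg (moment bound times exponential integrability, split by Cauchy--Schwarz) goes through as written once one notes $\mean{g}_B\le\lvert\log\setM\rvert_{\setBMO(B)}$ so that $e^{2t\mean{g}_B}\le e^{2\kappa_1}$ is harmless.

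One remark worth making: your proof actually yields $\kappa_1=\kappa_1(n)$ and $c_2=c_2(n)$, with no dependence on~$\Lambda$, which is stronger than the statement records. This is exactly because you replaced the textbook bound $\abs{e^A-e^B}\le\abs{A-B}e^{\max(\abs{A},\abs{B})}$ by the sharper $\abs{e^A-e^B}\le\abs{A-B}e^{\max(\lambda_{\max}(A),\lambda_{\max}(B))}$; the former would leave the residual exponent $\abs{\bar H}-\lambda_{\max}(\bar H)$, which one would then have to bound via the condition-number hypothesis $\abs{\setM}\abs{\setM^{-1}}\le\Lambda$. So your route is, if anything, a refinement. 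For the scalar statement, note that ``re-running the argument with scalars'' is the correct justification; invoking Lemma~\ref{lem:M.trans.omega} would put $\lvert\log\setM\rvert_{\setBMO(B)}$ rather than $\lvert\log\omega\rvert_{\setBMO(B)}$ on the right-hand side, which is not what is claimed.
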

The following results is a minor modification of~\cite[Proposition 6]{BDGN}.
\begin{lemma}\label{lem:Mu.weight}
  Let $\kappa_1$ and $c_2$ be as in Lemma \ref{lem:BMOM}. Then with a constant $\beta=\beta(n,\Lambda)=\min\left\{\kappa_1,1/c_2\right\}>0$, the following holds for all weights $\omega$.
  \begin{enumerate}
  \item \label{itm:Mu.weight-1} If $\lvert\log\omega|_{\setBMO(B)}\leq\frac{\beta}{\gamma}$ with $\gamma\geq1$, then there holds
    \begin{align*}
      \left(\dashint_{B}\omega^{\gamma}\,dx\right)^{\frac{1}{\gamma}}\leq 2\mean{\omega}^{\log}_{B}\quad\text{and}\quad \left(\dashint_{B}\omega^{-\gamma}\,dx\right)^{\frac{1}{\gamma}}\leq 2\dfrac{1}{\mean{\omega}^{\log}_{B}}.
    \end{align*}
  \item \label{itm:Mu.weight-2} If $\lvert\log\omega|_{\setBMO(B)}\leq\beta\min\{\frac{1}{p},\frac{1}{p'}\}$ with $1<p<\infty$, then $\omega^p$ is an $\mathcal{A}_p$--Muckenhoupt weight and
    \begin{align*}
      [\omega^p]^{\frac{1}{p}}_{\mathcal{A}_p}=\sup_{B'\subset B}\left(\dashint_{B'}\omega^p\,dx\right)^{\frac{1}{p}}\left(\dashint_{B'}\omega^{-p'}\,dx\right)^{\frac{1}{p'}}\leq 4.
    \end{align*}
  \item \label{itm:Mu.weight-3}Let $1<p<\infty$ and $\theta\in(0,1)$ be such that $\theta p>1$. If $\lvert\log\omega|_{\setBMO(B)}\leq\beta\min\{\frac{1}{p},1-\frac{1}{\theta p}\}$, then
    \begin{align*}
      \sup_{B'\subset B}\left(\dashint_{B'}\omega^p\,dx\right)^{\frac{1}{p}}\left(\dashint_{B'}\omega^{-(\theta p)'}\,dx\right)^{\frac{1}{(\theta p)'}}\leq 4.
    \end{align*}
  \end{enumerate}
\end{lemma}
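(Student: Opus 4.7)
The plan is to prove part \ref{itm:Mu.weight-1} as the central lemma, then deduce parts \ref{itm:Mu.weight-2} and \ref{itm:Mu.weight-3} from it by two applications with suitable exponents. The main tool will be the qualitative John--Nirenberg estimate of Lemma \ref{lem:BMOM}, and the key algebraic identity $\logmean{\omega^{-1}}_B = 1/\logmean{\omega}_B$ from the preliminary subsection, together with the fact that $\abs{\log\omega^{-1}}_{\setBMO(B)} = \abs{\log \omega}_{\setBMO(B)}$.

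For part \ref{itm:Mu.weight-1}, I would normalize by $\logmean{\omega}_B$ and use the decomposition
\begin{align*}
  \left(\dashint_B \omega^\gamma\,dx\right)^{\frac{1}{\gamma}}
  &\leq \logmean{\omega}_B + \left(\dashint_B \bigl|\omega - \logmean{\omega}_B\bigr|^\gamma \,dx\right)^{\frac{1}{\gamma}}
  = \logmean{\omega}_B\left(1 + \left(\dashint_B \left(\frac{|\omega - \logmean{\omega}_B|}{\logmean{\omega}_B}\right)^\gamma\,dx\right)^{\frac{1}{\gamma}}\right).
\end{align*}
Since $\gamma\geq 1$ and $\abs{\log\omega}_{\setBMO(B)}\leq \beta/\gamma \leq \kappa_1/\gamma$, the scalar version of Lemma \ref{lem:BMOM} with $t=\gamma$ bounds the last average by $c_2 \gamma \abs{\log\omega}_{\setBMO(B)} \leq c_2\beta \leq 1$, using $\beta\leq 1/c_2$. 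This yields the first inequality with constant $2$. For the second inequality, apply the same argument to the weight $1/\omega$, whose $\setBMO$-seminorm of the logarithm coincides with that of $\omega$, and use $\logmean{1/\omega}_B = 1/\logmean{\omega}_B$.

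For part \ref{itm:Mu.weight-2}, pick any sub-ball $B'\subset B$; then $\abs{\log\omega}_{\setBMO(B')}\leq \abs{\log\omega}_{\setBMO(B)}$ by monotonicity of the $\setBMO$-seminorm. Applying part \ref{itm:Mu.weight-1} on $B'$ once with $\gamma=p$ (valid since $\abs{\log\omega}_{\setBMO(B)}\leq \beta/p$) and once with $\gamma=p'$ (valid since $\abs{\log\omega}_{\setBMO(B)}\leq \beta/p'$), the two logarithmic means cancel and we obtain
\begin{align*}
  \left(\dashint_{B'}\omega^p\,dx\right)^{\frac{1}{p}}\left(\dashint_{B'}\omega^{-p'}\,dx\right)^{\frac{1}{p'}}
  \leq 2\logmean{\omega}_{B'}\cdot \frac{2}{\logmean{\omega}_{B'}} = 4.
\end{align*}
Taking the supremum over $B'\subset B$ gives the claim. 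Part \ref{itm:Mu.weight-3} is identical except that the second application of part \ref{itm:Mu.weight-1} is made with $\gamma = (\theta p)'$, which lies in $[1,\infty)$ because $\theta p > 1$. The hypothesis $\abs{\log\omega}_{\setBMO(B)}\leq \beta(1-\tfrac{1}{\theta p}) = \beta/(\theta p)'$ is exactly what is needed to invoke part \ref{itm:Mu.weight-1} at this second exponent, and the two logarithmic means again cancel to produce the universal constant $4$.

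The only mildly delicate point is ensuring the thresholds $\beta\leq \kappa_1$ (to enter Lemma \ref{lem:BMOM}) and $c_2\beta\leq 1$ (to close the estimate at constant $2$) are compatible; this is where the choice $\beta = \min\{\kappa_1,1/c_2\}$ enters. Otherwise, the proof is a direct cascade from part \ref{itm:Mu.weight-1}, and no further structural ideas are needed beyond tracking the reciprocal relation for the logarithmic mean.
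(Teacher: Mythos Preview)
Your proof is correct and follows essentially the same approach as the paper, which simply cites \cite[Proposition~6]{BDGN} with minimal changes for the localized version. The decomposition via Minkowski's inequality, the application of Lemma~\ref{lem:BMOM} with $t=\gamma$ to bound the normalized oscillation by $c_2\beta\leq 1$, and the cascade to parts~\ref{itm:Mu.weight-2} and~\ref{itm:Mu.weight-3} by multiplying the two estimates on each sub-ball are exactly the intended argument.
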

\begin{proof}
  The proof is the same as in \cite[Proposition 6]{BDGN} with minimal changes due to the localized versions.
\end{proof}
\begin{remark}
  Using the relation $\log(\setM^{-1})=-\log(\setM)$ and $\log(\omega^{-1})=-\log(\omega)$, we can apply Lemma \ref{lem:BMOM} and Lemma \ref{lem:Mu.weight} also to $\setM^{-1}$ and $\omega^{-1}$.
\end{remark}

\subsection{Summary on Orlicz Functions} We say that a function $\tilde{\phi}:\setR_{\geq 0}\rightarrow\setR_{\geq 0}$ is an N-function if $\tilde{\phi}':\setR_{\geq 0}\rightarrow\setR_{\geq 0}$ is a right-continuous and non-decreasing function with $\tilde{\phi}'(0)=0$ and $\lim_{t\rightarrow\infty}\tilde{\phi}'(t)=\infty$ such that $\tilde{\phi}(t)=\int^{t}_{0}\tilde{\phi}'(\tau)\,d\tau$. An N-function is called to satisfy the $\Delta_2$-condition if there is a constant $c>1$ such that $\tilde{\phi}(2t)\leq c\tilde{\phi}(t)$.

We now define a specific N-function
\begin{align*}
  \phi(t):=\dfrac{1}{p}t^p.
\end{align*}
Then we denote
\begin{align*}
  A(\xi)&:=\dfrac{\phi'(|\xi|)}{|\xi|}\xi=|\xi|^{p-2}\xi,
  \\ V(\xi)&:=\sqrt{\dfrac{\phi'(|\xi|)}{|\xi|}}\xi=|\xi|^{\frac{p-2}{2}}\xi.
\end{align*}
Let $\tilde{\phi}^*$ be the conjugate of an N-function $\tilde{\phi}$ as follows:
\begin{align*}
  \tilde{\phi}^*(t):=\sup_{s\geq 0}(ts-\tilde{\phi}(s)),\quad t\geq 0,
\end{align*}
and so $\phi^*(t)=\frac{1}{p'}t^{p'}$.

We also need the shifted N-functions as introduced in \cite{DE,DieKre08,DFTW,BDGN}. For $a\geq 0$ we define $\phi_a$ as
\begin{align}\label{eq:shifted}
  \phi_a(t):=\int^{t}_0\dfrac{\phi'(a\vee s)}{a\vee s}s \,ds.
\end{align} 
Here  $s_1\vee s_2:=\max\{s_1,s_2\}$ for $s_1,s_2\in\setR$. We call $a$ the \textit{shift}. So for $t \leq a$, then function~$\phi_a(t)$ is quadratic in~$t$.
One can see that $\phi_0=\phi$ holds, and $a\eqsim b$ implies $\phi_a(t)\eqsim \phi_b(t)$. Also, we have
\begin{align}
\phi_a(t)&\eqsim(a\vee t)^{p-2}t^2,\label{eq:phi}\\
\phi'_a(t)&\eqsim (a\vee t)^{p-2}t,\label{eq:phi2}\\
(\phi_a)^*&=(\phi^*)_{\phi'(a)},\label{eq:phistar}\\
(\phi_{|\xi|})^*&=(\phi^*)_{\phi'(|\xi|)}\label{eq:phistar2}
\end{align}
with constants depending only on $p$. Moreover, for $a\geq 0$, the collection of $\phi_a$ and $(\phi_a)^*$ satisfy the $\Delta_2$-condition with a $\Delta_2$-constant independent of $a$. 

We also have Young's inequality. For every $\epsilon>0$ there exists $c(\epsilon)=c(\epsilon,p)\geq1$ such that for all $s,t,a\geq 0$
\begin{align}\label{eq:st}
  st\leq c(\epsilon)(\phi_a)^*(s)+\epsilon\phi_a(t).
\end{align}
Here, $c(\epsilon)\eqsim \max \set{\epsilon^{-1},\epsilon^{-\frac{1}{p-1}}}$. Similarly, considering the relations $\phi_a(t)\eqsim\phi'_a(t)t$ and $(\phi_a)^*\eqsim(t\phi'_a(t))$, we have
\begin{align}\label{eq:st2}
  \begin{split}
    \phi'_a(s)t\leq c(\epsilon)\phi_a(s)+\epsilon\phi_a(t),\\\phi'_a(s)t\leq \epsilon\phi_a(s)+\tilde{c}(\epsilon)\phi_a(t)
  \end{split}
\end{align}
for all $s,t,a\geq 0$, where $\tilde{c}(\epsilon)\eqsim \max \set{\epsilon^{-1}, \epsilon^{1-p}}$.
Moreover, the following relation holds for $a\geq 0$:
\begin{align}\label{eq:equiv.phi}
\phi_a(\lambda a)\eqsim
\begin{cases}
\lambda^2\phi(a),\quad&\text{for }\lambda<1,\\\phi(\lambda a)\quad&\text{for }\lambda\geq 1.
\end{cases}
\end{align}
We emphasize the relation between $A, V$ and $\phi_a$ as in the following:

\begin{lemma}[{\cite[Lemma~41]{DFTW}}]\label{lem:equivA}
  For all $P,Q\in\setR^n$ we have
  \begin{align*}
    (A(P)-A(Q))\cdot(P-Q)\eqsim|V(P)-V(Q)|^2\eqsim\phi_{|Q|}(|P-Q|)\eqsim(\phi^*)_{|A(Q)|}(|A(P)-A(Q)|),
  \end{align*}
  \begin{align*}
    A(Q)\cdot Q=|V(Q)|^2\eqsim\phi_{|Q|}(|Q|)\eqsim\phi(|Q|)
  \end{align*}
  and
  \begin{align*}
    |A(P)-A(Q)|\eqsim(\phi_{|Q|})'(|P-Q|)\eqsim\phi'_{|P|\vee|Q|}(|P-Q|),
  \end{align*}
  where the implicit constants depend only on $p$.
\end{lemma}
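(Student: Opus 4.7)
The second displayed line follows by direct computation: $A(Q)\cdot Q = |Q|^p = |V(Q)|^2$, while $\phi(|Q|) = |Q|^p/p$ and $\phi_{|Q|}(|Q|) \eqsim |Q|^p$ by \eqref{eq:phi}. So I concentrate on the first and third displayed equivalences. The main target is $(A(P)-A(Q))\cdot(P-Q) \eqsim \phi_{|Q|}(|P-Q|)$; the plan is to start from the fundamental theorem of calculus
\[
A(P) - A(Q) = \int_0^1 DA(Q_s)(P-Q)\,ds, \qquad Q_s := Q + s(P-Q),
\]
where $DA(\xi) = |\xi|^{p-2}\bigl(\identity + (p-2)\hat\xi\otimes\hat\xi\bigr)$. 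The bracketed symmetric matrix has eigenvalues $1$ (with multiplicity $n-1$) and $p-1$, both positive for $p > 1$. Consequently
\[
\eta \cdot DA(\xi)\eta \eqsim |\xi|^{p-2}|\eta|^2 \quad\text{and}\quad |DA(\xi)\eta| \eqsim |\xi|^{p-2}|\eta|,
\]
with constants depending only on $p$.

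Testing the integral representation against $P-Q$ and invoking Fubini yields
\[
(A(P)-A(Q)) \cdot (P-Q) \eqsim |P-Q|^2 \int_0^1 |Q_s|^{p-2}\,ds,
\]
so the problem reduces to the geometric bound $\int_0^1 |Q_s|^{p-2}\,ds \eqsim (|Q| \vee |P-Q|)^{p-2}$. I would establish this by splitting on whether $|P-Q| \leq 2|Q|$ (in which case $|Q_s| \eqsim |Q|$ throughout) or $|P-Q| > 2|Q|$ (where for $s$ bounded away from $|Q|/|P-Q|$ one has $|Q_s| \eqsim s|P-Q|$), using the two-sided triangle inequality $\bigl||Q|-s|P-Q|\bigr| \leq |Q_s| \leq |Q| + s|P-Q|$. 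Combined with \eqref{eq:phi}, this identifies $(|Q|\vee|P-Q|)^{p-2}|P-Q|^2 \eqsim \phi_{|Q|}(|P-Q|)$.

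The remaining equivalences follow by the same mechanism. The Jacobian $DV(\xi) = |\xi|^{(p-2)/2}\bigl(\identity + \tfrac{p-2}{2}\hat\xi\otimes\hat\xi\bigr)$ has eigenvalues $|\xi|^{(p-2)/2}$ and $(p/2)|\xi|^{(p-2)/2}$, both positive for $p > 1$; repeating the argument produces $|V(P)-V(Q)|^2 \eqsim (|Q|\vee|P-Q|)^{p-2}|P-Q|^2$ (the lower bound can also be read off from $|V(P)-V(Q)|^2 \gtrsim (A(P)-A(Q))\cdot(P-Q)$, a direct consequence of the explicit formulas for $A$ and $V$). For $|A(P)-A(Q)|$ the upper bound is immediate from the integral representation and the $DA$ bound above, while the lower bound comes from $|A(P)-A(Q)|\,|P-Q| \geq (A(P)-A(Q))\cdot(P-Q) \eqsim \phi'_{|Q|}(|P-Q|)\,|P-Q|$ using \eqref{eq:phi} and \eqref{eq:phi2}. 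Swapping $\phi'_{|Q|}$ for $\phi'_{|P|\vee|Q|}$ costs nothing since $|P|\vee|Q| \eqsim |Q|\vee|P-Q|$. Finally, the conjugate relation $(\phi^*)_{|A(Q)|}(|A(P)-A(Q)|) \eqsim \phi_{|Q|}(|P-Q|)$ is obtained by combining \eqref{eq:phistar2} (which gives $(\phi_{|Q|})^* = (\phi^*)_{|A(Q)|}$, since $|A(Q)| = \phi'(|Q|)$) with the Young-type identity $\psi^*(\psi'(t)) \eqsim \psi(t)$ applied to $\psi = \phi_{|Q|}$.

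The main obstacle is the two-regime analysis of $\int_0^1 |Q_s|^{p-2}\,ds$: when $1 < p < 2$ the integrand is singular wherever the segment from $Q$ to $P$ passes close to the origin, and one must show this singularity is still integrable with the asserted right-hand side; when $p > 2$, the lower bound demands that $|Q_s|$ stays comparable to $|Q| \vee |P-Q|$ on an $s$-set of uniformly positive measure. Once these two regimes are cleanly separated, everything else reduces to algebraic manipulations of $\phi_a$ using \eqref{eq:phi}--\eqref{eq:phistar2}.
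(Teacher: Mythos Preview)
The paper does not prove this lemma; it is quoted verbatim from~\cite[Lemma~41]{DFTW}, so there is no in-paper argument to compare against. Your approach---linearising $A$ and $V$ via the fundamental theorem of calculus, reducing to the scalar integral $\int_0^1 |Q_s|^{p-2}\,ds$, and then identifying the result with $\phi_{|Q|}(|P-Q|)$ through~\eqref{eq:phi}--\eqref{eq:phistar2}---is exactly the standard route used in the cited reference and its predecessors, and the deductions for $|A(P)-A(Q)|$ and the conjugate shift are correct.

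One genuine slip: in the regime $|P-Q|\le 2|Q|$ you write that ``$|Q_s|\eqsim|Q|$ throughout''. This is false---take $Q=e_1$, $P=-e_1$, so $|P-Q|=2|Q|$ but $Q_{1/2}=0$. The segment can pass through (or arbitrarily close to) the origin even in this regime, so for $1<p<2$ the integrand is unbounded and your pointwise claim cannot give the upper bound. You in fact acknowledge this difficulty in your final paragraph, so the sketch is internally inconsistent. The fix is routine: use the reverse triangle inequality $|Q_s|\ge\bigl||Q|-s|P-Q|\bigr|$ and integrate the right-hand side explicitly (the one-dimensional integral $\int_0^1 |a-sb|^{p-2}\,ds$ is finite for $p>1$ and comparable to $(a\vee b)^{p-2}$), or argue that $|Q_s|\ge \tfrac12(|P|\vee|Q|)$ on an $s$-interval of length at least $\tfrac14$ (after assuming WLOG $|P|\le|Q|$, this holds for $s\le\tfrac14$), which handles the $p\ge 2$ lower bound. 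With this correction your outline is complete.
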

We usually use the following \emph{change of shift}:
\begin{lemma}[{Change of shift, \cite[Corollary~44]{DFTW}}]\label{lem:change}
  For $\epsilon>0$, there exists $c_{\epsilon}=c_{\epsilon}(\epsilon,p)$ such that for all $P,Q\in\setR^n$ we have
  \begin{align*}
    \phi_{|P|}(t)&\leq c_{\epsilon}\phi_{|Q|}(t)+\epsilon|V(P)-V(Q)|^2,\\
    \left(\phi_{|P|}\right)^*(t)&\leq c_{\epsilon}\left(\phi_{|Q|}\right)^*(t)+\epsilon|V(P)-V(Q)|^2,
  \end{align*}
  where $c_{\epsilon}=c(\epsilon,p)$.
\end{lemma}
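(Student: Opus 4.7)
The plan is a case analysis based on the relative sizes of $|P|$, $|Q|$, and $t$, using the concrete representation $\phi_a(t)\eqsim(a\vee t)^{p-2}t^2$ from \eqref{eq:phi}. When $|P|\eqsim|Q|$ (say $\tfrac{1}{2}|Q|\leq|P|\leq 2|Q|$), the factors $(|P|\vee t)^{p-2}$ and $(|Q|\vee t)^{p-2}$ agree up to a constant depending only on $p$, so $\phi_{|P|}(t)\lesssim\phi_{|Q|}(t)$ holds immediately without the absorption term. The substance of the lemma therefore sits in the ``far apart'' regime $|P|\leq\tfrac{1}{2}|Q|$ or $|P|\geq 2|Q|$; the two are symmetric and I describe the first.

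The key geometric input is that $|P-Q|\geq|Q|-|P|\geq\tfrac{1}{2}|Q|$, whence Lemma~\ref{lem:equivA} together with \eqref{eq:equiv.phi} gives $|V(P)-V(Q)|^2\eqsim\phi_{|P|\vee|Q|}(|P-Q|)\gtrsim\phi(|Q|)=|Q|^p$. For the upper bound, $\phi_{|P|}(t)\leq C(|P|\vee t)^p$: if $t>|Q|$ then $(|P|\vee t)^p=t^p\eqsim\phi_{|Q|}(t)$ and the inequality is trivial, whereas if $t\leq|Q|$ then $\phi_{|P|}(t)\leq C|Q|^p$, and combining with the lower bound yields $\phi_{|P|}(t)\leq C_0|V(P)-V(Q)|^2$ for some $C_0=C_0(p)$. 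This already proves the inequality for every $\epsilon\geq C_0$.

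For smaller $\epsilon$ I would use a dichotomy on the shift ratio $\varrho:=|P|/|Q|\in(0,\tfrac{1}{2}]$. If $\varrho^p\leq\epsilon/C_0$, the crude bound sharpens to $\phi_{|P|}(t)\leq C_0\varrho^p|Q|^p\leq\epsilon|V(P)-V(Q)|^2$. Otherwise $\varrho$ is bounded below by a quantity depending only on $\epsilon$ and $p$, so the ratio $\phi_{|P|}(t)/\phi_{|Q|}(t)\eqsim((|P|\vee t)/(|Q|\vee t))^{p-2}$ is bounded in terms of $\varrho$, producing $\phi_{|P|}(t)\leq c_\epsilon\phi_{|Q|}(t)$ with $c_\epsilon=c_\epsilon(\epsilon,p)$; the exponent bookkeeping differs between $p<2$ and $p\geq 2$ but the conclusion is the same.

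For the conjugate inequality I would dualize rather than repeat the argument. By \eqref{eq:phistar2}, $(\phi_{|P|})^*\eqsim(\phi^*)_{|A(P)|}$, and $\phi^*$ is an N-function of $p'$-type. Applying the first inequality (now established) to $\phi^*$ with shifts $|A(P)|$ and $|A(Q)|$ produces on the right-hand side the term $(\phi^*)_{|A(Q)|}(|A(P)-A(Q)|)$, which by the last equivalence in Lemma~\ref{lem:equivA} is $\eqsim|V(P)-V(Q)|^2$, matching the statement. The main technical obstacle is the sub-quadratic regime $p<2$ with $|P|\ll|Q|$ and $t\leq|P|$: the factor $(|P|\vee t)^{p-2}$ then blows up relative to $(|Q|\vee t)^{p-2}$ as $|P|\to 0$, ruling out any naive shift-to-shift comparison; the dichotomy above, leaning on the raw bound $\phi_{|P|}(t)\leq|P|^p$ to route the blow-up through $|V(P)-V(Q)|^2$, is the clean way to resolve it.
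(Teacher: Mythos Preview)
The paper does not prove this lemma; it is quoted from \cite[Corollary~44]{DFTW}, so there is no paper proof to compare against. Your overall strategy---case split on the relative sizes of $|P|$, $|Q|$, $t$, then a dichotomy to extract the $\epsilon$-dependence---is the right shape and can be made to work, and your dualization for the conjugate inequality via $V^*(A(P))=V(P)$ is correct. But the dichotomy you actually write down has a gap.

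You claim that when $\varrho:=|P|/|Q|\leq\tfrac12$ satisfies $\varrho^p\leq\epsilon/C_0$, ``the crude bound sharpens to $\phi_{|P|}(t)\leq C_0\varrho^p|Q|^p$'', and you later restate this as the ``raw bound $\phi_{|P|}(t)\leq|P|^p$''. This is false in the intermediate range $|P|<t\leq|Q|$: there $\phi_{|P|}(t)\eqsim t^p$, which can be as large as $|Q|^p$, not $|P|^p$. Concretely, with $p=\tfrac32$, $|P|=10^{-2}$, $|Q|=1$, $t=\tfrac12$ one has $\phi_{|P|}(t)\eqsim t^p\approx 0.35$ while $\varrho^p|Q|^p=10^{-3}$. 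So your first branch does not cover all $t\leq|Q|$; and for $p<2$ the missing range cannot simply be thrown into the second branch either, because there $\phi_{|P|}(t)/\phi_{|Q|}(t)\eqsim(t/|Q|)^{p-2}$ depends on $t$, not just on $\varrho$.

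The fix is to run the dichotomy on $(|P|\vee t)/|Q|$ rather than on $|P|/|Q|$. If $(|P|\vee t)^p\leq(\epsilon/C_0)\,|Q|^p$, then $\phi_{|P|}(t)\leq C(|P|\vee t)^p\leq\epsilon\,|V(P)-V(Q)|^2$ as before. Otherwise $(|P|\vee t)/|Q|$ is bounded below by a constant depending only on $\epsilon$ and $p$, and since $|Q|\vee t=|Q|$ in the regime $t\leq|Q|$ one gets $\phi_{|P|}(t)/\phi_{|Q|}(t)\eqsim\big((|P|\vee t)/|Q|\big)^{p-2}\leq c_\epsilon$ for $p<2$ (and $\leq 1$ for $p\geq 2$). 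With this correction your argument goes through.
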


Also, we need the following \emph{removal of shift}.
\begin{lemma}[{Removal of shift, \cite[Lemma 13]{BDGN}}]\label{lem:removal}
  For all $a\in\setR^n$, $t\geq 0$ and $\epsilon\in(0,1]$, we have
  \begin{align}\label{eq:removal1}
    \phi'_{|a|}(t)\leq\phi'\left(\dfrac{t}{\epsilon}\right)\vee(\epsilon\phi'(|a|)),\\
    \phi_{|a|}(t)\leq\epsilon\phi(|a|)+c\epsilon\phi\left(\dfrac{t}{\epsilon}\right),\\
    (\phi_{|a|})^*(t)\leq\epsilon\phi(|a|)+c\epsilon\phi^*\left(\dfrac{t}{\epsilon}\right)
  \end{align}
  with $c=c(p)$.
\end{lemma}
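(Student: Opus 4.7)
The strategy is to reduce everything to the pointwise formula $\phi'_a(t)\eqsim(a\vee t)^{p-2}t$ from~\eqref{eq:phi2}, establish~\eqref{eq:removal1} by elementary case analysis on $t$ relative to $\epsilon\lvert a\rvert$, integrate it to obtain the bound on $\phi_{\lvert a\rvert}$, and finally dualize via~\eqref{eq:phistar} to obtain the bound on $(\phi_{\lvert a\rvert})^*$.

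For the first inequality I split on the position of $t$ relative to $\epsilon\lvert a\rvert$. When $t<\epsilon\lvert a\rvert$ (so in particular $t<\lvert a\rvert$) we have $\phi'_{\lvert a\rvert}(t)\eqsim\lvert a\rvert^{p-2}t$, and $t\le\epsilon\lvert a\rvert$ gives $\lvert a\rvert^{p-2}t\le\epsilon\lvert a\rvert^{p-1}\eqsim\epsilon\phi'(\lvert a\rvert)$, which is the right branch of the maximum. When $t\ge\epsilon\lvert a\rvert$ I compare $\phi'_{\lvert a\rvert}(t)$ with $\phi'(t/\epsilon)$: for $t\ge\lvert a\rvert$ one has $\phi'_{\lvert a\rvert}(t)\eqsim t^{p-1}\le(t/\epsilon)^{p-1}$ since $\epsilon\in(0,1]$; for $\epsilon\lvert a\rvert\le t<\lvert a\rvert$ the required estimate reduces to $\epsilon^{p-1}\lvert a\rvert^{p-2}\le t^{p-2}$, which splits cleanly on the sign of $p-2$, both branches using only $\epsilon\le 1$ together with $\epsilon\lvert a\rvert\le t$.

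The bound on $\phi_{\lvert a\rvert}(t)$ then follows by integrating~\eqref{eq:removal1}. Using $\max(A,B)\le A+B$ and $\phi_{\lvert a\rvert}(t)=\int_0^t\phi'_{\lvert a\rvert}(s)\,ds$, we obtain $\phi_{\lvert a\rvert}(t)\le\epsilon\phi(t/\epsilon)+\epsilon t\,\phi'(\lvert a\rvert)$. The mixed term is the main obstacle, and is handled by a Young-type inequality (compare~\eqref{eq:st2}) in the form $t\,\phi'(\lvert a\rvert)\le\phi(t)+\phi^*(\phi'(\lvert a\rvert))\eqsim\phi(t)+\phi(\lvert a\rvert)$, after which the absorption $\epsilon\phi(t)\le\epsilon\phi(t/\epsilon)$ (valid since $\epsilon\le 1$) yields the displayed inequality up to $p$-dependent constants.

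For the conjugate statement I rewrite $(\phi_{\lvert a\rvert})^*(t)=(\phi^*)_{\phi'(\lvert a\rvert)}(t)$ via~\eqref{eq:phistar} and then apply the already-proved second inequality with $\phi$ replaced by the N-function $\phi^*$ and shift $b:=\phi'(\lvert a\rvert)$; the two preceding arguments used only the generic pointwise structure $\phi'_a(t)\eqsim\phi''(a\vee t)\,t$ together with the $\Delta_2$-condition, so they transfer verbatim to $\phi^*$ with constants depending on $p'$, hence on $p$. This yields $(\phi_{\lvert a\rvert})^*(t)\lesssim\epsilon\phi^*(\phi'(\lvert a\rvert))+\epsilon\phi^*(t/\epsilon)$, and since $\phi^*(\phi'(s))=s\phi'(s)-\phi(s)\eqsim\phi(s)$, the first term collapses to $\epsilon\phi(\lvert a\rvert)$ as required. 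The only delicate point throughout is the bookkeeping of the interaction between the shift $\lvert a\rvert$ and the parameter $\epsilon$; the choice of the threshold $t=\epsilon\lvert a\rvert$ in~\eqref{eq:removal1} is made exactly so that the subsequent integration and the Young splitting are both tight.
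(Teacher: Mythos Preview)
The paper does not give its own proof of this lemma; it is quoted verbatim from \cite[Lemma~13]{BDGN} without argument. Your proof is correct and self-contained: the case split at $t=\epsilon\lvert a\rvert$ for the first inequality is exactly the right threshold (note that from the definition $\phi'_{\lvert a\rvert}(t)=(\lvert a\rvert\vee t)^{p-2}t$ holds \emph{exactly}, not just up to constants, so the first line is a genuine pointwise inequality), the integration step is clean, and the dualization via \eqref{eq:phistar} to get the third estimate is the natural move.

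One minor remark on presentation: in the second inequality you obtain $\phi_{\lvert a\rvert}(t)\le c_1\epsilon\phi(\lvert a\rvert)+c_2\epsilon\phi(t/\epsilon)$ with $c_1=c_1(p)$, whereas the statement has coefficient exactly~$1$ in front of $\epsilon\phi(\lvert a\rvert)$. This is harmless---replacing $\epsilon$ by $\epsilon/c_1$ and using the homogeneity $\phi(c_1 t/\epsilon)=c_1^p\phi(t/\epsilon)$ recovers the stated form---but it is worth saying explicitly. Alternatively, you can avoid the Young splitting altogether by integrating the first inequality more carefully: for $t\le\epsilon\lvert a\rvert$ one has directly $\phi_{\lvert a\rvert}(t)\le\epsilon t\,\phi'(\lvert a\rvert)\le\epsilon^2\lvert a\rvert\,\phi'(\lvert a\rvert)=p\,\epsilon^2\phi(\lvert a\rvert)$, and for $t>\epsilon\lvert a\rvert$ one splits the integral at $\epsilon\lvert a\rvert$ and uses $\phi'_{\lvert a\rvert}(s)\le\phi'(s/\epsilon)$ on the upper piece. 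This gives the same bound without passing through $\max(A,B)\le A+B$.
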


\section{Global Maximal Regularity Estimates}
\label{sec:glob-cald-zygm}

We now provide global maximal regularity estimates for the weak solutions of our weighted $p$-Laplace equation for the linear case $p=2$ as well as the non-linear case $p\in(1,\infty)$. Let $\Omega\subset\setR^n$ be $(\delta,R)$--Lipschitz and $\bbM:\setR^n\to\setR^{n\times n}_{>0}$ be a degenerate elliptic matrix-valued weight with uniformly bounded condition number~\eqref{eq:M}. Recall, that~$\omega(x):=\abs{\bbM(x)}$.  Note, that \eqref{eq:M} is equivalent to
\begin{alignat}{2}
  \label{eq:M2}
  \Lambda^{-1}\omega(x)|\xi|&\leq|\setM(x)\xi|\leq\omega(x)|\xi|  & \quad\quad&\text{for all }\xi\in\setR^n
  \\
  \intertext{and also}
  \label{eq:M3}
  \Lambda^{-1}\omega(x)\text{Id}&\leq\setM(x)\leq\omega(x)\text{Id}
  &&\text{for all $x\in\Omega$.}
\end{alignat}
If we assume that $\log\setM$ has a small $\setBMO$-norm, i.e., assume
\begin{align}\label{eq:smallBMO.M2}
\lvert\log\setM|_{\setBMO(\Omega)}\leq\kappa,
\end{align}
we also have $\lvert\log\omega|_{\setBMO(\Omega)}\leq 2\kappa$ by Lemma \ref{lem:M.trans.omega}.
%Note that we do keep track of the dependence of the constants in terms of $\Lambda$ but we keep track of the dependence on $\kappa$.
Suppose that $\kappa$ is so small such that by Lemma \ref{lem:Mu.weight}, $\omega^p$ is an $\mathcal{A}_p$-Muckenhoupt weight. Then $C^{\infty}_{0}(\Omega)$ is dense in $W^{1,p}_{0,\omega}(\Omega)$. Now, let $u\in W^{1,p}_{0,\omega}(\Omega)$ be the weak solution of \eqref{eq:plap} with $F\in L^{p}_{\omega}(\Omega)$, i.e., if we denote
\begin{align*}
\mathcal{A}(x,\xi):=|\setM(x)\xi|^{p-2}\setM^2(x)\xi=\setM(x) A(\setM(x)\xi),
\end{align*}
then
\begin{align}\label{eq:plap3}
\int_{\Omega}\mathcal{A}(x,\nabla u)\cdot \nabla \xi\,dx=\int_{\Omega}\mathcal{A}(x,F)\cdot\nabla \xi\,dx
\end{align}
for all $\xi\in W^{1,p}_{0,\omega}(\Omega)$. Since $\omega^p$ is an $\mathcal{A}_p$-Muckenhoupt weight, the existence  and uniqueness of $u$ is guaranteed by standard arguments from the calculus of variations.

\subsection{Caccioppoli and Reverse H\"{o}lder's Inequality}
\label{sec:cacc-reverse-hold}

We start with the standard Caccioppoli estimates associated with our degenerate $p$-Laplacian problem. We fix a ball $B_0:=B_R(x_0)$ with $x_0\in\partial\Omega$. Then since $\Omega$ is $(\delta,4R)$--Lipschitz, there exists a coordinate system $\{x_1,\dots,x_n\}$ such that $x_0=0$ in this coordinate system, and with the assigned Lipschitz map $\psi:\setR^{n-1}\rightarrow\setR$ we have \eqref{eq:lip1} with $4R$ instead of $R$. Let $u\in W^{1,p}_{\omega}(4B_0\cap\Omega)$ be a weak solution of
%\comment[Lars]{Don't we need $(\delta,4R)$-Lipschitz, if we later use $4B_0$?}
%\comment[Ho-Sik]{Yes, actually we don't need $(\delta,4R)$-Lipschitz for $\Omega$. I think that in Section 3 we mainly consider the ``local" (and boundary) estimate, so imposing global assumption like $\Omega$ being $(\delta,4R)$-Lipschitz doesn't make sense at some viewpoint. But one reason for assuming $(\delta,4R)$-Lipschitz is for simplicity of writing. Another reason is, even it is not reasonable to assume that $\Omega\cap4B_0$ is $(\delta,4R)$-Lipschitz, since the angle on the point which $\Omega$ and $4B_0$ intersect is almost close to right angle, so it violates for $\delta$ being small. 
	
%But now I think that, since there is no $r$ in Definition 1.1, so if you want, then I change eq. (2.4) to the local assumption using $|\nabla \psi|\leq \delta$, where $\nabla\psi$ is the assigned map for fixed $x_0$.}
\begin{align}
  \label{eq:plap4}
  \begin{alignedat}{2}
    -\divergence\mathcal{A}(x,\nabla u) &=-\divergence\mathcal{A}(x,F) &\qquad& \text{in }4B_0\cap\Omega,
    \\
    u&=0&&\text{on }\partial\Omega\cap (4B_0).
  \end{alignedat}
\end{align}
From now on, let $B_r =B_r(\tilde{x})$ denote an arbitrary ball with $\tilde{x}\in\overline{\Omega}$ and $4B_r\subset 2B_0$. Denoting $a\Omega_{r}=aB_r\cap 2B_0\cap\Omega$ for $a\in\setR_+$, we have the following:

\begin{proposition}[Caccioppoli inequality]\label{prop:cacc} 
Let $u\in W^{1,p}_{\omega}(4B_0\cap\Omega)$ be a weak solution of \eqref{eq:plap4} and $B_r=B_r(\tilde{x})$ denote an arbitrary ball with $\tilde{x}\in\overline{\Omega}$ and $4B_r\subset 2B_0$.
\begin{itemize}
	\item[(1)](Interior case) If $2B_{r}\subset\Omega$, then we have
	\begin{align}\label{eq:cacc1}
	\dashint_{\Omega_{r}}|\nabla u|^p\omega^p\,dx\leq c \dashint_{2\Omega_{r}}\left|\dfrac{u-\mean{u}_{2\Omega_r}}{r}\right|^p\omega^p\,dx+c\dashint_{2\Omega_{r}}|F|^p\omega^p\,dx.
	\end{align}
	\item[(2)](Boundary case) Assume \eqref{eq:Omega}. If $2B_{r}\not\subset\Omega$, then we have
	\begin{align}\label{eq:cacc2}
	\dashint_{\Omega_{r}}|\nabla u|^p\omega^p\,dx\leq c \dashint_{2\Omega_{r}}\left|\dfrac{u}{r}\right|^p\omega^p\,dx+c\dashint_{2\Omega_{r}}|F|^p\omega^p\,dx.
	\end{align}
      \end{itemize}
      In both cases $c=c(n,p,\Lambda)$.
\end{proposition}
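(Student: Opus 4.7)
Both parts follow the classical test-function argument applied to the weak formulation \eqref{eq:plap3}, with the weight carried through as a multiplier. Fix a standard cutoff $\eta\in C_0^\infty(2B_r)$ with $\eta\equiv 1$ on $B_r$, $0\leq\eta\leq 1$, and $\abs{\nabla\eta}\leq c/r$, and set
\begin{align*}
  \xi := \eta^p(u-c_0),\qquad c_0 := \begin{cases} \mean{u}_{2\Omega_r} & \text{if } 2B_r\subset\Omega,\\ 0 & \text{if } 2B_r\not\subset\Omega.\end{cases}
\end{align*}
In the interior case $\xi$ is compactly supported in $2B_r\subset \Omega\cap 4B_0$. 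In the boundary case $\xi=\eta^p u$ is supported in $2B_r\subset 2B_0\Subset 4B_0$ and vanishes on $\partial\Omega\cap 4B_0$ because $u$ does, so $\xi\in W^{1,p}_{0,\omega}(4B_0\cap\Omega)$ by the $C^\infty_0$-density recalled at the beginning of Section~\ref{sec:glob-cald-zygm}. In either case $\xi$ is admissible in \eqref{eq:plap3}.

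Using $\nabla\xi=\eta^p\nabla u + p\eta^{p-1}(u-c_0)\nabla\eta$, the weak formulation rearranges to
\begin{align*}
  \int \eta^p \mathcal{A}(x,\nabla u)\cdot\nabla u\,dx = -p\int \eta^{p-1}(u-c_0)\mathcal{A}(x,\nabla u)\cdot\nabla\eta\,dx + \int \mathcal{A}(x,F)\cdot\nabla\xi\,dx.
\end{align*}
Two pointwise consequences of \eqref{eq:M2} drive everything: the ellipticity $\mathcal{A}(x,\nabla u)\cdot\nabla u=\abs{\setM\nabla u}^p\geq\Lambda^{-p}\omega^p\abs{\nabla u}^p$, and the growth bound $\abs{\mathcal{A}(x,\zeta)}\leq\omega^p\abs{\zeta}^{p-1}$ for $\zeta=\nabla u,\,F$. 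Inserting these and applying Young's inequality to the three mixed products
\begin{align*}
  \eta^{p-1}\abs{u-c_0}\,\omega^p\abs{\nabla u}^{p-1}\abs{\nabla\eta},\qquad \eta^p\omega^p\abs{F}^{p-1}\abs{\nabla u},\qquad \eta^{p-1}\abs{u-c_0}\,\omega^p\abs{F}^{p-1}\abs{\nabla\eta}
\end{align*}
(splitting each $\abs{\nabla u}^{p-1}$-factor as $\epsilon\abs{\nabla u}^p+C_\epsilon(\cdot)^p$) and choosing $\epsilon=\epsilon(p,\Lambda)$ small enough to absorb every $\eta^p\omega^p\abs{\nabla u}^p$ term into the coercive left-hand side, I arrive at
\begin{align*}
  \int_{2B_r\cap\Omega}\eta^p\omega^p\abs{\nabla u}^p\,dx \leq c\int_{2B_r\cap\Omega}\Bigl|\tfrac{u-c_0}{r}\Bigr|^p\omega^p\,dx + c\int_{2B_r\cap\Omega}\omega^p\abs{F}^p\,dx,
\end{align*}
with $c=c(n,p,\Lambda)$.

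Finally, since $\eta\equiv 1$ on $B_r$, dividing by $\abs{B_r}$ converts the estimate into the claimed averaged form, provided $\abs{\Omega_r}\eqsim\abs{B_r}\eqsim\abs{2\Omega_r}$. In the interior case this is trivial; in the boundary case it is precisely the content of the measure-density estimate \eqref{eq:m.den1} from Remark~\ref{rmk:Psi}, which is why the $(\delta,4R)$-Lipschitz hypothesis \eqref{eq:Omega} is imposed in part~(2). The only mild technical point is keeping track of the weight $\omega$ during Young's inequality, but since $\omega$ enters purely as a multiplier and the condition number $\Lambda$ is independent of $x$, the proof is essentially the unweighted $p$-Laplace Caccioppoli argument multiplied through by $\omega^p$; no genuine obstacle appears.
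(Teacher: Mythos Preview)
Your proof is correct and follows essentially the same approach as the paper: the paper cites \cite[Proposition~8]{BDGN} for the interior case and, for the boundary case, tests $\eta^p u$ in the weak formulation of \eqref{eq:plap4}, uses the ellipticity and growth bounds from \eqref{eq:M2}, absorbs via Young's inequality, and finishes with the measure-density estimate \eqref{eq:m.den1}---exactly your argument with $c_0=0$. One cosmetic slip: you should reference the local weak formulation of \eqref{eq:plap4} rather than the global one \eqref{eq:plap3}, since $u$ is only assumed to be a local solution on $4B_0\cap\Omega$.
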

\begin{proof}
First, \eqref{eq:cacc1} follows from \cite[Proposition 8]{BDGN}. To show \eqref{eq:cacc2}, let $\eta$ be a smooth cut-off function with $\chi_{B_r}\leq\eta\leq\chi_{2B_r}$ and $|\nabla \eta|\leq\frac{c}{r}$. Testing $\eta^p u\in W^{1,p}_{0,\omega}(2\Omega_r)$ in \eqref{eq:plap4}, we get

\begin{align*}
\int_{2\Omega_{r}}|\setM\nabla u|^{p-2}\setM\nabla u\cdot\setM\nabla(\eta^p u)\,dx=\int_{2\Omega_{r}}|\setM F|^{p-2}\setM F\cdot\setM\nabla(\eta^p u)\,dx.
\end{align*}
Using \eqref{eq:M2} we have
\begin{align*}
\int_{2\Omega_{r}}\eta^{p}|\nabla u|^p\omega^p\,dx&\lesssim \int_{2\Omega_{r}}\eta^{p-1}|\nabla u|^{p-1}\left|\dfrac{u}{r}\right|\omega^p\,dx\\
&\quad+\int_{2\Omega_{r}}\eta^p|F|^{p-1}|\nabla u|\omega^p\,dx+\int_{2\Omega_{r}}\eta^{p-1}|F|^{p-1}\left|\dfrac{u}{r}\right|\omega^p\,dx.
\end{align*}
By Young's inequality, absorb the term $\eta^p|\nabla u|^p\omega^p$ into left-hand side, it follows that
\begin{align*}
\int_{\Omega_{r}}|\nabla u|^p\omega^p\,dx\lesssim\int_{2\Omega_{r}}\left|\dfrac{u}{r}\right|^p\omega^p\,dx+\int_{2\Omega_{r}}|F|^p\omega^p\,dx.
\end{align*}
Now, by \eqref{eq:m.den1}, we have $|\Omega_r|\eqsim|B_r|\eqsim|2B_{r}|\eqsim|2\Omega_{r}|$. Then \eqref{eq:cacc2} follows.
\end{proof}

Now, we can provide the reverse H\"{o}lder's inequality.
\begin{lemma}\label{lem:rev.holder0}
  Assume \eqref{eq:Omega}. There exists $\kappa_2=\kappa_2(n,p,\Lambda)>0$ and $\theta\in(0,1)$ such that if $\lvert\log\setM|_{\setBMO(3B_r)}\leq\kappa_2$, then
  \begin{align*}
    \dashint_{\Omega_{r}}|\nabla u|^p\omega^p\,dx\leq c\left(\dashint_{2\Omega_{r}}(|\nabla u|\omega)^{\theta p}\,dx\right)^{\frac{1}{\theta}}+c\dashint_{2\Omega_{r}}|F|^p\omega^p\,dx
  \end{align*}
  holds with $c=c(n,p,\Lambda)$.
\end{lemma}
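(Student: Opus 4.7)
The plan is to combine the two Caccioppoli inequalities of Proposition~\ref{prop:cacc} with a weighted Poincaré--Sobolev self-improvement, where the exponent drop is fueled by the small-BMO assumption on $\log\setM$ through Lemma~\ref{lem:Mu.weight}. First I would fix a Sobolev exponent $\theta\in(0,1)$ with $\theta p>\max\{1,\tfrac{np}{n+p}\}$, which exists because $p>1$ and $n\geq 2$. With $\beta=\beta(n,\Lambda)$ from Lemma~\ref{lem:Mu.weight}, I set
\[
\kappa_2 := \tfrac12\beta\min\bigl\{\tfrac{1}{p},\,1-\tfrac{1}{\theta p}\bigr\}.
\]
Then the assumption $|\log\setM|_{\setBMO(3B_r)}\leq\kappa_2$ gives $|\log\omega|_{\setBMO(3B_r)}\leq 2\kappa_2$ by Lemma~\ref{lem:M.trans.omega}, and Lemma~\ref{lem:Mu.weight}(\ref{itm:Mu.weight-3}) then yields the Muckenhoupt-type bound
\[
\sup_{B'\subset 3B_r}\Bigl(\dashint_{B'}\omega^p\,dx\Bigr)^{1/p}\Bigl(\dashint_{B'}\omega^{-(\theta p)'}\,dx\Bigr)^{1/(\theta p)'}\leq 4,
\]
which is precisely the hypothesis~\eqref{eq:omega1} of Proposition~\ref{prop:SP}, as well as of its mean-value variant (Proposition~3 in \cite{BDGN}).

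Next I split into the two cases of Proposition~\ref{prop:cacc}. In the interior case $2B_r\subset\Omega$, the Caccioppoli estimate~\eqref{eq:cacc1} leaves us with controlling $\dashint_{2B_r}\lvert(u-\mean{u}_{2B_r})/r\rvert^p\omega^p\,dx$; the mean-value weighted Poincaré--Sobolev inequality \cite[Proposition~3]{BDGN} (available because of the Muckenhoupt bound above) converts this into $\bigl(\dashint_{2B_r}(|\nabla u|\omega)^{\theta p}\,dx\bigr)^{1/\theta}$, as desired. In the boundary case $2B_r\not\subset\Omega$, the Caccioppoli estimate~\eqref{eq:cacc2} leaves us with controlling $\dashint_{2\Omega_r}|u/r|^p\omega^p\,dx$, and since $u=0$ on $\partial\Omega\cap 2B_r$ I apply Proposition~\ref{prop:SP} directly (after, if necessary, enlarging $r$ to some $\rho\in[r,2r]$ so that $B_{3\rho/2}(\tilde x)\not\subset\Omega$, which is permitted by the hypothesis $2B_r\not\subset\Omega$ and only affects the constants by absolute factors through the measure density~\eqref{eq:m.den1}). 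Combining with the $|F|^p\omega^p$ term carried over from Caccioppoli gives the stated estimate in both cases.

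The main delicate point is purely bookkeeping: threading the smallness constant $\kappa_2$ through Lemmas~\ref{lem:M.trans.omega} and~\ref{lem:Mu.weight}(\ref{itm:Mu.weight-3}) so that the resulting $\mathcal{A}_{p,(\theta p)'}$-type constant $c_1$ is an absolute number (here $4$), and keeping all the balls $B_r,2B_r,3B_r,4B_r$ inside the reference ball $2B_0$ so that the global Lipschitz hypothesis~\eqref{eq:Omega} and the BMO assumption on $\log\setM$ are available with uniform constants. No further analytic input beyond assembling Caccioppoli and weighted Poincaré--Sobolev at the correct level of smallness is required, and the resulting $\theta$ depends only on $n,p$, while $\kappa_2$ depends on $n,p,\Lambda$ as asserted.
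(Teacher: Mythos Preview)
Your proposal is correct and follows essentially the same approach as the paper: combine the Caccioppoli inequalities of Proposition~\ref{prop:cacc} with the weighted Poincar\'e--Sobolev inequality (Proposition~3 of \cite{BDGN} in the interior, Proposition~\ref{prop:SP} at the boundary), using Lemma~\ref{lem:Mu.weight}\ref{itm:Mu.weight-3} to verify the Muckenhoupt-type hypothesis from the small-$\setBMO$ assumption on $\log\setM$. The only cosmetic difference is that the paper splits the two cases at the threshold $\tfrac{3}{2}B_r\subset\Omega$ (matching the hypothesis of Proposition~\ref{prop:SP} directly) rather than at $2B_r\subset\Omega$, which spares the radius-adjustment you mention; otherwise the arguments coincide.
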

\begin{proof}
  If $\frac{3}{2}B_r\subset\Omega$, then we can select small $\kappa_2$  such that Lemma \ref{lem:Mu.weight} (c) holds true to use Proposition 3 in \cite{BDGN}. This and Proposition \ref{prop:cacc} proves the claim.

  If $\frac{3}{2}B_r\not\subset\Omega$, using Proposition \ref{prop:SP} instead of Proposition 3 in \cite{BDGN}, we again prove the claim.
\end{proof}

Now, we have the following higher integrability.

\begin{corollary}[Higher integrability]\label{cor:hi} 
  Assume \eqref{eq:Omega}. There exist $\kappa_2=\kappa_2(n,p,\Lambda)>0$ and $s=s(n,p,\Lambda)\in(1,2)$ such that if $\lvert\log\setM|_{\setBMO(3B_r)}\leq\kappa_2$, then
  \begin{align*}
    \left(\dashint_{\Omega_{r}}(|\nabla u|^p\omega^p)^s\,dx\right)^{\frac{1}{s}}\leq c\dashint_{2\Omega_{r}}|\nabla u|^p\omega^p\,dx+c\left(\dashint_{2\Omega_{r}}(|F|^p\omega^p)^s\,dx\right)^{\frac{1}{s}}
  \end{align*}
  holds with $c=c(n,p,\Lambda)$.
\end{corollary}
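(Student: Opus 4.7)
\textbf{Proof plan for Corollary \ref{cor:hi}.} The argument is a standard Gehring self-improvement applied to the reverse Hölder inequality of Lemma~\ref{lem:rev.holder0}. The plan has three ingredients: propagate the $\setBMO$ smallness to all relevant sub-balls, recognize the inequality in Lemma~\ref{lem:rev.holder0} as a standard reverse Hölder after raising to the power $\frac{1}{\theta}$, and then invoke the classical self-improvement lemma for reverse Hölder inequalities on balls (with boundary behavior absorbed via the measure-density bounds of Remark~\ref{rmk:Psi}).

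First I would fix the exponent $\theta\in(0,1)$ and the threshold $\kappa_2$ produced by Lemma~\ref{lem:rev.holder0}, possibly shrinking $\kappa_2$ once more so that Lemma~\ref{lem:Mu.weight}\ref{itm:Mu.weight-3} is available uniformly. For any sub-ball $B'=B_{r'}(y)$ with $y\in\overline{\Omega}$ and $4B'\subset 2B_0\cap 3B_r$, the seminorm is monotone under inclusion, so $\lvert\log\setM\rvert_{\setBMO(3B')}\leq \lvert\log\setM\rvert_{\setBMO(3B_r)}\leq \kappa_2$. Hence Lemma~\ref{lem:rev.holder0} applies uniformly on the family of such sub-balls $B'$, giving
\begin{align*}
  \dashint_{B'\cap 2B_0\cap\Omega}|\nabla u|^p\omega^p\,dx \leq c\left(\dashint_{2B'\cap 2B_0\cap\Omega}(|\nabla u|\omega)^{\theta p}\,dx\right)^{\frac{1}{\theta}}+c\dashint_{2B'\cap 2B_0\cap\Omega}|F|^p\omega^p\,dx.
\end{align*}

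Next I would set $g:=\bigl(|\nabla u|\omega\bigr)^{\theta p}\chi_{2B_0\cap\Omega}$ and $h:=\bigl(|F|\omega\bigr)^{\theta p}\chi_{2B_0\cap\Omega}$, extended by zero outside $2B_0\cap\Omega$. Using the measure density estimate \eqref{eq:m.den1} from Remark~\ref{rmk:Psi}, namely $|B'\cap\Omega|\eqsim|B'|$ for sub-balls $B'$ centered in $\overline{\Omega}$, the previous display rewrites as the standard reverse Hölder inequality
\begin{align*}
  \dashint_{B'}g^{1/\theta}\,dx \leq A\left(\dashint_{2B'}g\,dx\right)^{1/\theta}+A\dashint_{2B'}h^{1/\theta}\,dx
\end{align*}
valid for every ball $B'$ in the admissible range, with exponent $\frac{1}{\theta}>1$.

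Finally I would apply the Gehring--Giaquinta--Modica self-improvement lemma on balls (see e.g.\ the version in \cite{BDGN} used for the interior estimate, which works identically here because of the measure-density bounds): there exist $s=s(n,p,\Lambda)\in(1,2)$ and $C=C(n,p,\Lambda)$ such that
\begin{align*}
  \left(\dashint_{B_r} g^{s/\theta}\,dx\right)^{1/s} \leq C\dashint_{2B_r} g^{1/\theta}\,dx + C\left(\dashint_{2B_r} h^{s/\theta}\,dx\right)^{1/s}.
\end{align*}
Substituting back $g^{1/\theta}=(|\nabla u|\omega)^p\chi_{\Omega}$ and $h^{1/\theta}=(|F|\omega)^p\chi_{\Omega}$ and invoking the measure density identity once more to restrict integration to $\Omega_r$ and $2\Omega_r$ yields the claim.

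The only non-routine point is making sure Gehring's lemma is truly usable uniformly up to the boundary, but this is exactly what the $(\delta,4R)$-Lipschitz assumption buys us through \eqref{eq:m.den1}--\eqref{eq:m.den2}: balls centered in $\overline{\Omega}$ behave like full balls up to constants depending only on $n$, so the zero-extension of $g$ and $h$ converts the boundary reverse Hölder into a genuine one on $\setR^n$-balls without losing the quantitative structure needed by the self-improvement lemma.
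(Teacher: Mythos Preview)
Your proposal is correct and follows essentially the same approach as the paper: extend $\nabla u$ and $F$ by zero outside $\Omega$, use the measure density $|\Omega_r|\eqsim|B_r|$ from Remark~\ref{rmk:Psi} to recast Lemma~\ref{lem:rev.holder0} as a standard reverse H\"older inequality on full balls, and then invoke Gehring's lemma. The paper's proof is a single sentence pointing to \cite[Theorem~6.6]{G2}; you have simply unpacked the same idea with more detail.
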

\begin{proof}
  After extending $\nabla u=F=0$ in $2B_{r}\setminus\Omega$, and considering $|\Omega_r|\eqsim|B_r|$ and $|2\Omega_{r}|\eqsim|2B_{r}|$, Gehring's lemma (e.g. \cite[Theorem 6.6]{G2}) implies the conclusion.
\end{proof}

\subsection{Comparison Estimate at Boundary}
In this section, we only prove boundary comparison estimates, since the interior estimates are proved in \cite{BDGN}. 
Let us assume $\setM F\in L^{q}(\Omega)$. Choose a cut-off function $\eta\in C^{\infty}_{0}(B_0)$ with
\begin{align}\label{eq:cutoff1}
\chi_{\frac{1}{2}B_0}\leq\eta\leq\chi_{B_0}\quad\text{and}\quad \|\nabla \eta\|_{\infty}\leq c/R.
\end{align}
Define $z$ on $\setR^n$ as follows: first let $z$ on $B_0\cap\Omega$ be such that
\begin{align}\label{eq:z}
z:=u\eta^{p'}
\end{align}
and take the zero extension for $z$ on $\setR^n\setminus (B_0\cap\Omega)$, if necessary. Also, we denote
\begin{align}\label{eq:g1}
g:=\eta^{p'}\nabla u-\nabla z=-u\nabla(\eta^{p'})=-up'\eta^{p'-1}\nabla \eta.
\end{align}
Then we have the following estimate:
\begin{align}\label{eq:g2}
|g|\lesssim\dfrac{|u|}{R}.
\end{align}

For the convenience of notation, we write
\begin{align*}
\setM_{B_r}&:=\mean{\setM}^{\log}_{B_r},\\\omega_{B_r}&:=\mean{\omega}^{\log}_{B_r},\\
\end{align*}
and so
\begin{align*}
\mathcal{A}_{B_r}(\xi)&:=|\setM_{B_r}\xi|^{p-2}\setM^{2}_{B_r}\xi=\setM_{B_r}A(\setM_{B_r}\xi),\\
\mathcal{V}(x,\xi)&:=V(\setM(x)\xi),\\
\mathcal{V}_{B_r}(\xi)&:=V(\setM_{B_r}\xi).
\end{align*}
Then we have the following relations for all $\xi\in\setR^n$:
\begin{align}
\mathcal{A}(x,\xi)\cdot\xi&=|\mathcal{V}(x,\xi)|^2,\label{eq:calA1}\\\mathcal{A}_{B_r}(\xi)\cdot\xi&=|\mathcal{V}_{B_r}(\xi)|^2,\label{eq:calA2}\\
|\mathcal{A}_{B_r}(\xi)|&\lesssim\omega^{p}_{B_r}|\xi|^{p-1} \label{eq:calA3}
\end{align}
and by \cite[Section 3]{BDGN},
\begin{align}
\Lambda^{-1}\omega_{B_r}|\xi|&\leq|\setM_{B_r}\xi|\leq\omega_{B_r}|\xi|,\label{eq:MB2}\\
\Lambda^{-1}\omega_{B_r}&\leq|\setM_{B_r}|\leq\omega_{B_r}\label{eq:MB3}.
\end{align}
Summing up the above result, we have \cite[Lemma 16]{BDGN} as follows:
for all $\xi\in\setR^n$ and all $x\in B_r$ there holds
\begin{align}\label{eq:diff.A}
|\mathcal{A}_{B_r}(\xi)-\mathcal{A}(x,\xi)|\lesssim\dfrac{|\setM-\setM_{B_r}(x)|}{|\setM_{B_r}|}\left(|\mathcal{A}_{B_r}(\xi)|+|\mathcal{A}(x,\xi)|\right).
\end{align}

Before introducing the reference problem, we provide the following lemma for the well-posedness:
\begin{lemma}\label{lem:HI1}
Assuming \eqref{eq:Omega}, there exists $\kappa_3=\kappa_3(n,p,\Lambda)>0$ and $s=s(n,p,\Lambda)\in(1,2)$ such that if $\lvert\log\setM|_{\setBMO(4B_r)}\leq\kappa_3$, then
with $2\Omega_{r}=2B_r\cap B_0\cap\Omega$ there holds
\begin{align}\label{eq:HI1}
\dashint_{\Omega_r}(|\nabla u|\omega_{B_r})^p\,dx\lesssim\dashint_{2\Omega_{r}}(|\nabla u|\omega)^p\,dx+\left(\dashint_{2\Omega_{r}}(|F|\omega)^{ps}\,dx\right)^{\frac{1}{s}}.
\end{align}
\end{lemma}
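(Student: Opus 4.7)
The plan is to first absorb the constant logarithmic mean $\omega_{B_r}$ into $\omega$ via Hölder's inequality at the cost of slightly raising the exponent on $|\nabla u|\omega$, and then invoke the higher integrability from Corollary~\ref{cor:hi} to return to the exponent $p$. Throughout we use that $\omega_{B_r}$ is a positive constant on $B_r$.

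First I would write, pointwise on $\Omega_r$,
\begin{align*}
  (|\nabla u|\omega_{B_r})^p = (|\nabla u|\omega)^p \left(\frac{\omega_{B_r}}{\omega}\right)^p,
\end{align*}
and apply Hölder's inequality with exponents $s$ and $s'$ (where $s>1$ is the higher-integrability exponent from Corollary~\ref{cor:hi}), obtaining
\begin{align*}
  \dashint_{\Omega_r}(|\nabla u|\omega_{B_r})^p\,dx
  \leq \left(\dashint_{\Omega_r}(|\nabla u|\omega)^{ps}\,dx\right)^{1/s}
  \left(\omega_{B_r}^{ps'}\dashint_{\Omega_r}\omega^{-ps'}\,dx\right)^{1/s'}.
\end{align*}

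The second factor should be $O(1)$. By Lemma~\ref{lem:M.trans.omega} we have $\lvert\log\omega\rvert_{\setBMO(B_r)} \leq 2\lvert\log\setM\rvert_{\setBMO(4B_r)} \leq 2\kappa_3$. Choosing $\kappa_3$ so small that $2\kappa_3\leq \beta/(ps')$ allows Lemma~\ref{lem:Mu.weight}\,\ref{itm:Mu.weight-1}, applied to $\omega^{-1}$ with $\gamma=ps'$, to give
\begin{align*}
  \left(\dashint_{B_r}\omega^{-ps'}\,dx\right)^{1/(ps')} \leq \frac{2}{\mean{\omega}^{\log}_{B_r}} = \frac{2}{\omega_{B_r}}.
\end{align*}
Combined with the measure density property $|B_r|/|\Omega_r|\eqsim 1$ from \eqref{eq:m.den1}, this yields
\begin{align*}
  \omega_{B_r}^{ps'}\dashint_{\Omega_r}\omega^{-ps'}\,dx
  \lesssim \omega_{B_r}^{ps'}\dashint_{B_r}\omega^{-ps'}\,dx \lesssim 1.
\end{align*}

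Putting these together gives
\begin{align*}
  \dashint_{\Omega_r}(|\nabla u|\omega_{B_r})^p\,dx
  \lesssim \left(\dashint_{\Omega_r}(|\nabla u|\omega)^{ps}\,dx\right)^{1/s}.
\end{align*}
Finally, taking $\kappa_3 \leq \kappa_2$ so that Corollary~\ref{cor:hi} applies on $B_r$, and rewriting $(|\nabla u|\omega)^{ps}=(|\nabla u|^p\omega^p)^s$, the higher integrability estimate on the doubled ball yields
\begin{align*}
  \left(\dashint_{\Omega_r}(|\nabla u|\omega)^{ps}\,dx\right)^{1/s}
  \lesssim \dashint_{2\Omega_r}(|\nabla u|\omega)^p\,dx
   + \left(\dashint_{2\Omega_r}(|F|\omega)^{ps}\,dx\right)^{1/s},
\end{align*}
which concludes \eqref{eq:HI1}.

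The only non-routine point is the bookkeeping of the smallness parameter: one must choose $\kappa_3=\kappa_3(n,p,\Lambda)>0$ small enough to simultaneously ensure (i) the $\mathcal{A}_{ps'/(\cdot)}$-type bound from Lemma~\ref{lem:Mu.weight}\,\ref{itm:Mu.weight-1} with $\gamma=ps'$ on $B_r$ (so that $\omega_{B_r}^{ps'}\dashint \omega^{-ps'}\lesssim 1$), and (ii) the smallness hypothesis of Corollary~\ref{cor:hi}. Since both conditions take the form of an upper bound on $\lvert\log\setM\rvert_{\setBMO(4B_r)}$ depending only on $n,p,\Lambda$, this is achievable and fixes $\kappa_3$.
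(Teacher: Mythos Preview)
Your proposal is correct and follows essentially the same approach as the paper: multiply and divide by $\omega$, apply H\"older with exponents $(s,s')$, bound the factor $\big(\omega_{B_r}^{ps'}\dashint_{\Omega_r}\omega^{-ps'}\,dx\big)^{1/s'}$ via Lemma~\ref{lem:Mu.weight} and the measure density $|\Omega_r|\eqsim|B_r|$, and then apply Corollary~\ref{cor:hi}. Your explicit discussion of the smallness bookkeeping for $\kappa_3$ is slightly more detailed than the paper's, but the argument is the same.
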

\begin{proof}
Using H\"{o}lder's inequality, $|\Omega_r|\eqsim|B_r|$ and Lemma \ref{lem:Mu.weight}, we have
\begin{align*}
\dashint_{\Omega_r}(|\nabla u|\omega_{B_r})^p\,dx&\leq\left(\dashint_{\Omega_r}(|\nabla u|\omega)^{ps}\,dx\right)^{\frac{1}{s}}\left(\dashint_{\Omega_r}(\omega_{B_r}\omega^{-1})^{ps'}\,dx\right)^{\frac{1}{s'}}\\
&\lesssim\left(\dashint_{\Omega_r}(|\nabla u|\omega)^{ps}\,dx\right)^{\frac{1}{s}}\left(\dashint_{B_r}(\omega_{B_r}\omega^{-1})^{ps'}\,dx\right)^{\frac{1}{s'}}\\
&\lesssim\left(\dashint_{\Omega_r}(|\nabla u|\omega)^{ps}\,dx\right)^{\frac{1}{s}}.
\end{align*}
Then Corollary \ref{cor:hi} yields the conclusion.
\end{proof}

Now, let $h\in W^{1,p}_{\omega_{B_r}}(\Omega_r)$ be the weak solution of
\begin{align}\label{eq:2nd}
\begin{split}
-\divergence\left(\mathcal{A}_{B_r}(\nabla h)\right)&=0\quad\text{in }\Omega_r,\\
h&=z\quad\text{on }\partial\Omega_r.
\end{split}
\end{align}
Then $h$ is the unique minimizer of
\begin{align}\label{eq:mini}
w\mapsto\int_{\Omega_r}\phi(|\setM_{B_r}\nabla w|)\,dx
\end{align}
with boundary data $w=z$ on $\partial\Omega_r$. Now, we provide the first comparison estimate. Recall that $B_r=B_r(\tilde{x})$, $B_0=B_R(x_0)$, $4B_r\subset 2B_0$, $\tilde{x}\in\overline{\Omega}$ and $z,h$ are given by \eqref{eq:z} and \eqref{eq:2nd}, respectively. Moreover, as \cite[Eq. (3.25)]{BDGN}, we have
\begin{align}\label{eq:eq}
\begin{split}
-&\divergence\left(\mathcal{A}_{B_r}(\nabla z)-\mathcal{A}_{B_r}(\nabla h)\right)\\
&\quad=-\divergence\left(\mathcal{A}_{B_r}(\nabla z)-\mathcal{A}(\cdot,\nabla z)\right)-\divergence\left(\mathcal{A}(\cdot,\nabla z)-\mathcal{A}(\cdot,\nabla z+g)\right)\\
&\quad\quad-\eta^p\divergence\left(\mathcal{A}(\cdot,F)\right)-\nabla(\eta^p)\cdot\mathcal{A}(\cdot,\nabla u)
\end{split}
\end{align}
on $\Omega_r$, in the distributional sense.

\begin{proposition}[{First comparison at boundary}]\label{prop:comp}
Assuming \eqref{eq:Omega}, let $h$ be as in \eqref{eq:2nd} and $z$ be as in \eqref{eq:z}. There exist $s>1$ and $\kappa_4=\kappa_4(n,p,\Lambda)\in(0,1)$, such that if $\lvert\log\setM|_{\setBMO(4B_r)}\leq\kappa_4$ holds, then for every $\epsilon\in(0,1)$ we have
\begin{align}\label{eq:comp}
\begin{split}
\dashint_{\Omega_r}&|\mathcal{V}_{B_r}(\nabla h)-\mathcal{V}_{B_r}(\nabla z)|^2\,dx\leq c\left(\lvert\log\setM|^2_{\setBMO(B_r)}+\epsilon\right)\left(\dashint_{\Omega_r}(|\nabla z|^p\omega^p)^s\,dx\right)^{\frac{1}{s}}\\
&\quad\quad\quad+c\,C^*(\epsilon)\left(\dashint_{2\Omega_{r}}\left(\dfrac{|u|^p}{R^p}\omega^p\right)^s\,dx\right)^{\frac{1}{s}}+c\,C^*(\epsilon)\left(\dashint_{2\Omega_{r}}(|F|^p\omega^p)^s\,dx\right)^{\frac{1}{s}}
\end{split}
\end{align}
for some $c=c(n,p,\Lambda)$, and $C^*(\epsilon)=\max\left\{\epsilon^{1-p},\epsilon^{-\frac{1}{p-1}}\right\}$.
\end{proposition}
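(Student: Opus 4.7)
The plan is to test the distributional identity \eqref{eq:eq} against $z-h$, which belongs to $W^{1,p}_{0,\omega_{B_r}}(\Omega_r)$ since $z$ and $h$ share the same trace on $\partial\Omega_r$. Monotonicity of the frozen operator $\mathcal{A}_{B_r}(\xi)=\setM_{B_r}A(\setM_{B_r}\xi)$ in the sense of Lemma \ref{lem:equivA} turns the left-hand side into a constant multiple of $\int_{\Omega_r}|\mathcal{V}_{B_r}(\nabla z)-\mathcal{V}_{B_r}(\nabla h)|^2\,dx$, the quantity we want to control. On the right-hand side of \eqref{eq:eq} four error contributions appear, the first two already in divergence form, the last two -- $-\eta^p\divergence\mathcal{A}(\cdot,F)$ and $-\nabla(\eta^p)\cdot\mathcal{A}(\cdot,\nabla u)$ -- to be combined after integrating the former by parts against $z-h$, which produces $\int\eta^p\mathcal{A}(\cdot,F)\cdot\nabla(z-h)\,dx$ plus a zero-order remainder involving $(z-h)\nabla(\eta^p)\cdot(\mathcal{A}(\cdot,F)-\mathcal{A}(\cdot,\nabla u))$.

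For the coefficient-perturbation term I would apply the pointwise bound \eqref{eq:diff.A} and then Young's inequality \eqref{eq:st} with the shifted $N$-function $\phi_{|\setM_{B_r}\nabla z|}$ and its conjugate. This produces an absorbable contribution of size $\epsilon|\mathcal{V}_{B_r}(\nabla z)-\mathcal{V}_{B_r}(\nabla h)|^2$ plus a remainder dominated by
\begin{align*}
c(\epsilon)\int_{\Omega_r}\Bigl(\dfrac{|\setM-\setM_{B_r}|}{|\setM_{B_r}|}\Bigr)^2\phi(|\setM_{B_r}\nabla z|)\,dx.
\end{align*}
A H\"older inequality with some $s>1$ supplied by the reverse H\"older bound of Corollary \ref{cor:hi}, combined with Lemma \ref{lem:BMOM} (valid once $\kappa_4$ is small enough to meet its threshold), replaces the oscillation factor by $c\,|\log\setM|^2_{\setBMO(B_r)}$; this yields the first term in \eqref{eq:comp}.

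The shift-perturbation term is bounded through $|\mathcal{A}(\cdot,\xi_1)-\mathcal{A}(\cdot,\xi_2)|\lesssim|\setM|\,\phi'_{|\setM\xi_1|\vee|\setM\xi_2|}(|\setM(\xi_1-\xi_2)|)$ from Lemma \ref{lem:equivA}, followed by Young's inequality \eqref{eq:st2} and the change-of-shift Lemma \ref{lem:change} to transport every shift to $|\setM_{B_r}\nabla z|$. Invoking $|g|\lesssim|u|/R$ from \eqref{eq:g2} then produces an $\epsilon$-absorbable piece together with a remainder controlled by $C^*(\epsilon)\phi(|u|\omega/R)$. The two cut-off contributions are handled analogously: Young's inequality on $\int\eta^p\mathcal{A}(\cdot,F)\cdot\nabla(z-h)\,dx$, after change of shift, gives an absorbable piece plus a remainder of the form $C^*(\epsilon)\phi(|F|\omega)$; for the zero-order piece involving $(z-h)\nabla(\eta^p)$, the weighted boundary Poincar\'e inequality of Proposition \ref{prop:SP} applied to $z-h\in W^{1,p}_{0,\omega_{B_r}}(\Omega_r)$ allows us to exchange $|z-h|/R$ for $|\nabla(z-h)|$ and close the estimate in the same way.

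Finally, averaging over $\Omega_r$, absorbing all $\epsilon|\mathcal{V}_{B_r}(\nabla z)-\mathcal{V}_{B_r}(\nabla h)|^2$ contributions into the left-hand side, and upgrading the resulting $L^p$-averages of $|\nabla z|^p\omega_{B_r}^p$, $|u|^p\omega^p/R^p$ and $|F|^p\omega^p$ on $\Omega_r$ to $L^{ps}$-averages on $2\Omega_r$ via Corollary \ref{cor:hi} and Lemma \ref{lem:HI1} delivers \eqref{eq:comp}. I expect the main obstacle to be the uniform handling of the many shifts appearing in the interior and the boundary pieces of $\Omega_r$, since a naive change of shift produces constants depending on $|\setM\nabla z|$ and $|\setM\nabla u|$ that must be tamed independently of $r$; this is achieved by choosing $\kappa_4$ small enough so that Lemma \ref{lem:Mu.weight} and Lemma \ref{lem:BMOM} furnish uniform constants on every sub-ball involved, and by carefully combining the two cut-off contributions so that only shifted norms relative to $\setM_{B_r}\nabla z$ remain.
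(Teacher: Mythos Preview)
Your overall plan---test $z-h$ in \eqref{eq:eq}, use Lemma~\ref{lem:equivA} for the left-hand side, and treat the four right-hand-side contributions separately---matches the paper's proof, and your treatment of the coefficient-perturbation term ($I_1$ in the paper) and the shift term ($I_2$) is essentially the same as theirs. The gap is in the cut-off piece
\[
\int_{\Omega_r}(z-h)\,\nabla(\eta^p)\cdot\mathcal{A}(\cdot,\nabla u)\,dx,
\]
which is the paper's $I_4$. Your plan ``exchange $|z-h|/R$ for $|\nabla(z-h)|$ via Poincar\'e and close in the same way as the $F$ term'' does not deliver \eqref{eq:comp} for $p>2$. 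After Poincar\'e on scale~$r$ and Young's inequality you are left with either $c(\epsilon)\dashint_{\Omega_r}|\nabla z|^p\omega^p$ (wrong constant in front of the $\nabla z$ term) or $C^*(\epsilon)\dashint_{\Omega_r}|\nabla u|^p\omega^p$; the latter, after Caccioppoli, yields $C^*(\epsilon)\dashint_{2\Omega_r}|u/r|^p\omega^p$ with the \emph{small} scale $r$, not the required $|u/R|^p$. The missing factor $(r/R)^p$ cannot be recovered from a two-term Young's inequality when $p>2$.

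The paper resolves this with the algebraic factorisation
\[
\eta^{p-1}|\nabla u|^{p-1}=|\eta^{p'}\nabla u|^{\,p-2+\frac1p}\,|\nabla u|^{\,1-\frac1p}
=|\nabla z+g|^{\,p-2+\frac1p}\,|\nabla u|^{\,1-\frac1p},
\]
followed by a three-term Young's inequality with exponents $\bigl(\tfrac{p}{p-2+1/p},\tfrac{p^2}{p-1},p\bigr)$. This simultaneously places~$\epsilon$ on the $|\nabla z+g|^p$ and $|(z-h)/r|^p$ pieces and puts $C^*(\epsilon)(r/R)^{p^2/(p-1)}$ in front of $|\nabla u|^p$; since $p^2/(p-1)>p$, the extra power of $r/R$ survives Caccioppoli and converts $|u/r|^p$ into $|u/R|^p$. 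This is the one genuinely new ingredient you need to add. A minor remark: the $L^{ps}$-averages on the right of \eqref{eq:comp} do not come from ``upgrading via Corollary~\ref{cor:hi}''; they arise directly when you apply H\"older's inequality to split off the oscillation factor $\bigl(|\setM-\setM_{B_r}|/|\setM_{B_r}|\bigr)^2$ and the weight ratios $\omega^p/\omega_{B_r}^p$, exactly as in the paper's estimate of $I_{1,2}$.
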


\begin{proof}
The proof is similar to the one of \cite[Proposition 17]{BDGN}. Observe that $\lvert\log\setM|_{\setBMO(4B_r)}\leq\kappa_4$ implies that \eqref{eq:2nd} is well-defined. Testing $z-h$ to \eqref{eq:2nd} and \eqref{eq:plap4}, by \eqref{eq:eq} it follows that
\begin{align}\label{eq:comp1}
\begin{split}
I_0&:=\dashint_{\Omega_r}\left(\mathcal{A}_{B_r}(\nabla z)-\mathcal{A}_{B_r}(\nabla h)\right)\cdot(\nabla z-\nabla h)\,dx\\
&=\dashint_{\Omega_r}\left(\mathcal{A}_{B_r}(\nabla z)-\mathcal{A}(x,\nabla z)\right)\cdot(\nabla z-\nabla h)\,dx\\
&\quad+\dashint_{\Omega_r}\left(\mathcal{A}(x,\nabla z)-\mathcal{A}(x,\nabla z+g)\right)\cdot(\nabla z-\nabla h)\,dx\\
&\quad+\dashint_{\Omega_r}\mathcal{A}(x,F)\cdot(\nabla(\eta^p z)-\nabla(\eta^p h))\,dx\\
&\quad+\dashint_{\Omega_r}\nabla (\eta^p)\cdot\mathcal{A}(x,\nabla u)(z-h)\,dx=:I_1+I_2+I_3+I_4.
\end{split}
\end{align}

By Lemma \ref{lem:equivA}, we have
\begin{align}\label{eq:comp2}
I_0\eqsim\dashint_{\Omega_r}|\mathcal{V}_{B_r}(\nabla h)-\mathcal{V}_{B_r}(\nabla z)|^2\,dx\eqsim\dashint_{\Omega_r}\phi_{|\nabla z|}(|\nabla z-\nabla h|)\omega^p_{B_r}\,dx.
\end{align}	
To estimate $I_1$, arguing as in the proof of \cite[Proposition 17]{BDGN}, we have
\begin{align}\label{eq:comp3}
\begin{split}
I_1&=\dashint_{\Omega_r}\left(\mathcal{A}_{B_r}(\nabla z)-\mathcal{A}(x,\nabla z)\right)\cdot(\nabla z-\nabla h)\,dx\\
&\leq\sigma\dashint_{\Omega_r}\phi_{|\nabla z|}(|\nabla z-\nabla h|)\omega^p_{B_r}\,dx\\
&\quad+c(\sigma)\dashint_{\Omega_r}\left(\dfrac{|\setM-\setM_{B_r}|}{|\setM_{B_r}|}\right)^2\phi(|\nabla z|)\left(\dfrac{\omega^p_{B_r}}{\omega^p}+\dfrac{\omega^p}{\omega^p_{B_r}}+\dfrac{\omega^{p'}}{\omega^{p'}_{B_r}}\right)\omega^p\,dx=I_{1,1}+I_{1,2}
\end{split}
\end{align}
for any $\sigma\in(0,1)$. Now, $I_{1,1}$ is absorbed to the left-hand side $I_0$ by choosing $\sigma=\sigma(n,p,\Lambda)$ sufficiently small, and so $c(\sigma)=c$. For $I_{1,2}$, we first assume $\lvert\log\setM|_{\setBMO(B_r)}\leq\kappa_1=\kappa_1(n,p,\Lambda)$ and then use Lemma \ref{lem:BMOM}, together with $|\Omega_r|\eqsim |B_r|$ (the measure density of $\Omega_r$ to $B_r$) from \eqref{eq:Omega} and \eqref{eq:m.den1}, to have
\begin{align}\label{eq:comp3.1}
\begin{split}
I_{1,3}&:=\left(\dashint_{\Omega_r}\left(\dfrac{|\setM-\setM_{B_r}|}{|\setM_{B_r}|}\right)^{4s'}\,dx\right)^{\frac{1}{2s'}}\\
&\lesssim \left(\dashint_{B_r}\left(\dfrac{|\setM-\setM_{B_r}|}{|\setM_{B_r}|}\right)^{4s'}\,dx\right)^{\frac{1}{2s'}}\lesssim \lvert\log\setM|^2_{\setBMO(B_r)}.
\end{split}
\end{align}
Also, assume $\lvert\log\setM|_{\setBMO(B_r)}\leq\kappa_4$ for some small $\kappa_4=\kappa_4(n,p,\Lambda)$ and then use Lemma \ref{lem:Mu.weight}, together with $|\Omega_r|\eqsim |B_r|$ from \eqref{eq:m.den1} with the help of \eqref{eq:Omega} to have 
\begin{align}\label{eq:comp3.2}
I_{1,4}:=\dashint_{\Omega_r}\left(\dfrac{\omega^p_{B_r}}{\omega^p}+\frac{\omega^p}{\omega^p_{B_r}}+\frac{\omega^{p'}}{\omega^{p'}_{B_r}}\right)^{2s'}\,dx\lesssim\dashint_{B_r}\left(\dfrac{\omega^p_{B_r}}{\omega^p}+\frac{\omega^p}{\omega^p_{B_r}}+\frac{\omega^{p'}}{\omega^{p'}_{B_r}}\right)^{2s'}\,dx\leq c.
\end{align}
Then by H\"{o}lder's inequality and the above two displays, we obtain
\begin{align}\label{eq:comp4}
I_{1,2}&\leq c\, I_{1,3}I_{1,4}^{\frac{1}{2s'}}\left(\dashint_{\Omega_r}(|\nabla z|^p\omega^p)^s\,dx\right)^{\frac{1}{s}}\lesssim c\,\lvert\log\setM|^2_{\setBMO(B_r)}\left(\dashint_{\Omega_r}(|\nabla z|^p\omega^p)^s\,dx\right)^{\frac{1}{s}}.
\end{align}

From now on, we only specify the necessary tools to provide each resulting estimates, since we mainly follow the proof of \cite[Proposition 17]{BDGN}, and when we use the measure density property, we apply the similar manipulation as above. First, by \eqref{eq:g2}, Lemma \ref{lem:change} and Lemma \ref{lem:Mu.weight} together with the measure density property of $\Omega_r$ to $B_r$, we estimate $I_2$ as follows:
\begin{align}\label{eq:comp5}
\begin{split}
I_{2}&\leq\sigma\dashint_{\Omega_r}\phi_{|\nabla z|}(|\nabla z-\nabla h|)\omega^p_{B_r}\,dx\\
&\quad+ c_{\sigma}\epsilon\left(\dashint_{\Omega_r}(|\nabla z|^p\omega^p)^s\,dx\right)^{\frac{1}{s}}+c_{\sigma}\epsilon^{-\frac{1}{p-1}}\left(\dashint_{\Omega_r}\left(\dfrac{|u|^p}{ R^p}\omega^p\right)^s\,dx\right)^{\frac{1}{s}}
\end{split}
\end{align}
for any $\sigma,\epsilon\in(0,1)$. To estimate $I_3$, using Young's inequality and $0\leq\eta\leq 1$, we have
\begin{align}\label{eq:comp6}
\begin{split}
I_3&\lesssim \epsilon^{-\frac{1}{p-1}}\dashint_{\Omega_r}\dfrac{\omega^{p'}}{\omega^{p'}_{B_r}}\eta^p|F|^p\omega^p\,dx+\epsilon\dashint_{\Omega_r}|\nabla z-\nabla h|^p\omega^p_{B_r}\,dx+\epsilon\dashint_{\Omega_r}\left|\dfrac{z-h}{r}\right|^p\omega^p_{B_r}\,dx\\
&=:I_{3,1}+I_{3,2}+I_{3,3}.
\end{split}
\end{align}
Extending $z-h$ as $0$ in $B_r\setminus\Omega$, and using Proposition 3 in \cite{BDGN}, we have $I_{3,3}\lesssim I_{3,2}$. We employ triangle inequality, minimizing property of $h$ in \eqref{eq:mini}, \eqref{eq:MB2}, H\"{o}lder's inequality and Lemma \ref{lem:Mu.weight} together with $|\Omega_r|\eqsim |B_r|$, to obtain
\begin{align}\label{eq:comp8}
I_{3,2}\leq\epsilon c\left(\dashint_{\Omega_r}(|\nabla z|^p\omega^p)^s\,dx\right)^{\frac{1}{s}}.
\end{align}
Using also H\"{o}lder's inequality and Lemma \ref{lem:Mu.weight} with $|\Omega_r|\eqsim |B_r|$, we have
\begin{align}\label{eq:comp9}
I_{3,1}\lesssim c\epsilon^{-\frac{1}{p-1}} \left(\dashint_{\Omega_r}(|F|^p\omega^p)^s\,dx\right)^{\frac{1}{s}}.
\end{align}

Finally, to estimate $I_4$, instead of dividing the case into $p>2$ and $1<p\leq2$ as in \cite{BDGN}, we consider the cases in a unified way. By $p'(p-2+\frac{1}{p})=p-1$ and $\eta^{p'}\nabla u=\nabla z+g$, we first see that
\begin{align}\label{eq:z8}
\begin{split}
I_4&\leq\dashint_{\Omega_r}|\nabla(\eta^p)||\mathcal{A}(x,\nabla u)||z-h|\,dx\\
&\lesssim\dashint_{\Omega_r}|\nabla \eta||\eta^{p'}\nabla u|^{p-2+\frac{1}{p}}|\nabla u|^{1-\frac{1}{p}}|z-h|\omega^p\,dx\\
&\lesssim\dashint_{\Omega_r}\dfrac{r}{R}|\nabla z+g|^{p-2+\frac{1}{p}}|\nabla u|^{1-\frac{1}{p}}\left|\dfrac{z-h}{r}\right|\omega^p\,dx\\
&\lesssim\epsilon\dashint_{\Omega_r}|\nabla z+g|^p\omega^p\,dx+C^*(\epsilon)\dashint_{\Omega_r}\left(\dfrac{r}{R}\right)^{\frac{p^2}{p-1}}|\nabla u|^p\omega^p\,dx+\epsilon\dashint_{\Omega_r}\left|\dfrac{z-h}{r}\right|^p\omega^p\,dx
\end{split}
\end{align}
for any $\epsilon\in(0,1]$, where for the last step we have used Young's inequality for the exponents $\left(\frac{p}{p-2+\frac{1}{p}},\frac{p^2}{p-1},p\right)$ and $0\leq\eta_1\leq 1$. Here, $C^*(t):(0,1]\rightarrow\setR^+$ is such that
\begin{align}\label{eq:C}
C^*(t)=\max\left\{t^{1-p},t^{-\frac{1}{p-1}}\right\},
\end{align}
which is a continuous function on $(0,1]$ for each fixed $p\in(1,\infty)$. Recalling \eqref{eq:g2}, we have
\begin{align}\label{eq:z9}
\dashint_{\Omega_r}|\nabla z+g|^p\omega^p\,dx\lesssim\dashint_{\Omega_r}|\nabla z|^p\omega^p\,dx+\dashint_{\Omega_r}\left|\dfrac{u}{R}\right|^p\omega^p\,dx.
\end{align}
Also, since $\frac{p^2}{p-1}>p$ on $p\in(1,\infty)$ and $r\leq R$ hold, there holds
\begin{align}\label{eq:z10}
\begin{split}
\dashint_{\Omega_r}\left(\dfrac{r}{R}\right)^{\frac{p^2}{p-1}}|\nabla u|^p\omega^p\,dx&\lesssim\left(\dfrac{r}{R}\right)^{p}\dashint_{\Omega_r}|\nabla u|^p\omega^p\,dx\\
&\lesssim \left(\dfrac{r}{R}\right)^{p}\dashint_{2\Omega_r}\left|\dfrac{u}{r}\right|^p\omega^p\,dx+\dashint_{2\Omega_r}|F|^p\omega^p\,dx\\
&\lesssim \dashint_{2\Omega_r}\left|\dfrac{u}{R}\right|^p\omega^p\,dx+\dashint_{2\Omega_r}|F|^p\omega^p\,dx.
\end{split}
\end{align}

Extending $z-h$ as $0$ in $B_r\setminus\Omega$, and using Proposition 3 in \cite{BDGN}, H\"{o}lder's inequality, Lemma \ref{lem:Mu.weight} together with $|\Omega_r|\eqsim |B_r|$, triangle inequality, minimizing property of $h$ in \eqref{eq:mini} and \eqref{eq:MB2} yields 
\begin{align}\label{eq:comp13}
\dashint_{\Omega_r}\left|\dfrac{z-h}{r}\right|^p\omega^p\,dx\lesssim\left(\dashint_{\Omega_r}(|\nabla z|^p\omega^p)^s\,dx\right)^{\frac{1}{s}},
\end{align}
as in \cite{BDGN}. Note that the argument used in \cite{BDGN} for \eqref{eq:comp13} can be applied in all cases $p\in(1,\infty)$.
Thus it follows that
\begin{align}\label{eq:z11}
\begin{split}
I_3\lesssim \epsilon\left(\dashint_{\Omega_r}(|\nabla z|^p\omega^p)^s\,dx\right)^{\frac{1}{s}}+C^*(\epsilon)\dashint_{2\Omega_r}\left|\dfrac{u}{R}\right|^p\omega^p\,dx+C^*(\epsilon)\dashint_{2\Omega_r}|F|^p\omega^p\,dx.
\end{split}
\end{align} 
Summing up the above all estimates, we have
\begin{align}\label{eq:comp14}
\begin{split}
  \dashint_{\Omega_r}|\mathcal{V}_{B_r}(\nabla h)-\mathcal{V}_{B_r}(\nabla z)|^2\,dx&\lesssim\sigma\dashint_{\Omega_r}\phi_{|\nabla z|}(|\nabla z-\nabla h|)\omega^p_{B_r}\,dx\\
  &\quad+c(\sigma)(\,\lvert\log\setM|^2_{\setBMO(B_r)}+\epsilon)\left(\dashint_{\Omega_r}(|\nabla z|^p\omega^p)^s\,dx\right)^{\frac{1}{s}}\\
  &\quad+c(\sigma)\big(\epsilon^{-\frac{1}{p-1}}+C^*(\epsilon)\big)\left(\dashint_{2\Omega_r}\left(\dfrac{|u|^p}{R^p}\omega^p\right)^s\,dx\right)^{\frac{1}{s}}\\
  &\quad+c(\sigma)\big(\epsilon^{-\frac{1}{p-1}}+C^*(\epsilon)\big) \left(\dashint_{2\Omega_r}(|F|^p\omega^p)^s\,dx\right)^{\frac{1}{s}}\\
  &=I_5+I_6+I_7+I_8.
\end{split}
\end{align}
By Lemma \ref{lem:equivA} and \eqref{eq:M2}, $\phi_{|\nabla z|}(|\nabla z-\nabla h|)\omega^p_{B_r}\eqsim |\mathcal{V}_{B_r}(\nabla h)-\mathcal{V}_{B_r}(\nabla z)|^2$. Then by choosing $\sigma\in(0,1)$ sufficiently small depending on $n,p$ and $\Lambda$, $I_5$ is absorbed to the left-hand side. Finally, $\epsilon^{-\frac{1}{p-1}}\leq C^*(\epsilon)$ holds when $\epsilon\in(0,1)$, and so the estimate \eqref{eq:comp} holds true.
\end{proof}

Now, we give the second comparison estimate. With $y=\Psi(x)$, we define
\begin{align}\label{eq:trans1}
\tilde{h}(y):=h(\Psi^{-1}(y)).
\end{align}
Let $\tilde{v}=\tilde{v}(y)\in W^{1,p}(\Psi(\frac{1}{2}\Omega_{r}))$ be the weak solution of 
\begin{align}\label{eq:2comp3}
\begin{split}
-\divergence_y(|\setM_{B_r}\nabla_y \tilde{v}|^{p-2}\setM_{B_r}^2\nabla_y \tilde{v})&=0\quad\text{in }\Psi(\tfrac{1}{2}\Omega_{r}),\\
\tilde{v}&=\tilde{h}\quad\text{on }\partial(\Psi(\tfrac{1}{2}\Omega_{r})).
\end{split}
\end{align}
Since $\Psi$ is a homeomorphism, together with \eqref{eq:lip1} we have $\partial(\Psi(\frac{1}{2}\Omega_{r}))=\Psi(\partial(\frac{1}{2}\Omega_{r}))$. Denoting $v(x)=\tilde{v}(y)=\tilde{v}(\Psi(x))$, we have
\begin{align}\label{eq:trans2}
\nabla v(x)=\nabla\tilde{v}(\Psi(x))=(\nabla\Psi)(x)\nabla_y\tilde{v}(\Psi(x))=(\nabla\Psi)(x)\nabla_y\tilde{v}(y).
\end{align}
 Denoting $T(x)=(\nabla\Psi)^{-1}(x)$, \eqref{eq:2comp3} transforms into
\begin{align}\label{eq:2comp1}
\begin{split}
-\divergence(T^{t}(x)|\setM_{B_r} T(x)\nabla&v|^{p-2}\setM_{B_r}^2T(x)\nabla v)=0\quad\text{in }\tfrac{1}{2}\Omega_{r},\\
v(x)&=h(x)\quad\quad\quad\quad\quad\quad\quad\,\text{on }\partial
(\tfrac{1}{2}\Omega_{r}),
\end{split}
\end{align}
where~$T^t(x)$  abbreviation stands for transpose. Or equivalently, denoting 
\begin{align}\label{eq:A.Psi}
\mathcal{A}_{\Psi}(x,\xi):=T^{t}(x)|\setM_{B_r}T(x)\xi|^{p-2}\setM_{B_r}^2T(x)\xi,
\end{align}
we have
\begin{align}\label{eq:2comp2}
\begin{split}
-\divergence(\mathcal{A}_{\Psi}(x,\nabla v))&=0\quad\quad\,\,\text{in }\tfrac{1}{2}\Omega_{r},\\
v(x)&=h(x)\quad\text{on }\partial(\tfrac{1}{2}\Omega_{r}).
\end{split}
\end{align}
The problems \eqref{eq:2comp1} and \eqref{eq:2comp2} can be derived also from the weak formulation of the equation. At this time we have also used that $\det(\nabla\Psi)=1$ for the change of coordinate in the integrals.
The natural function space for $v$ is $W^{1,p}_{\omega_{B_r}}(\frac{1}{2}\Omega_r)$ and $v$ is the unique minimizer of
\begin{align}\label{eq:mini2}
w\mapsto\int_{\frac{1}{2}\Omega_{r}}\phi(|\setM_{B_r}T(x)\nabla w|)\,dx
\end{align}
subject to the boundary condition $v=h$ on $\partial(\frac{1}{2}\Omega_{r})$.

Now, we need the following lemma.
\begin{lemma}\label{lem:diff.A2}
Assume \eqref{eq:Omega}. For all $\xi\in\setR^n$ and $x\in \frac{1}{2}\Omega_r$ we have
\begin{align}\label{eq:T}
|T(x)|\eqsim|T^{-1}(x)|\eqsim c
\end{align}
and
\begin{align}\label{eq:diff.A2}
|\mathcal{A}_{B}(\xi)-\mathcal{A}_{\Psi}(x,\xi)|\leq c\|\nabla\psi\|_{\infty}\omega_B^p\min\left\{|\xi|^{p-1},|T(x)\xi|^{p-1}\right\}
\end{align}
for some $c=c(n,p,\Lambda)$.
\end{lemma}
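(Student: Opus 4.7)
\emph{Plan.} For \eqref{eq:T} I would use the explicit block‐triangular form in \eqref{eq:J} to read off
\[
T(x) = (\nabla\Psi)^{-1}(x) = \begin{pmatrix} I & 0 \\ \nabla\psi(x') & 1\end{pmatrix},
\]
from which $|T(x) - \identity| \leq c\|\nabla\psi\|_\infty$ with $c = c(n)$. Combined with the assumption \eqref{eq:Omega} this gives $|T(x)| \leq 1 + c\|\nabla\psi\|_\infty \leq c$, and analogously $|T^{-1}(x)| = |\nabla\Psi(x)| \leq c$, hence both are equivalent to $1$.

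For the main bound \eqref{eq:diff.A2}, I would use $\mathcal{A}_B(\xi) = \setM_B A(\setM_B\xi)$ and $\mathcal{A}_\Psi(x,\xi) = T^t(x)\setM_B A(\setM_B T(x)\xi)$ to split
\begin{align*}
\mathcal{A}_B(\xi) - \mathcal{A}_\Psi(x,\xi)
&= \setM_B\bigl[A(\setM_B\xi) - A(\setM_B T(x)\xi)\bigr] \\
&\quad + \bigl(\identity - T^t(x)\bigr)\setM_B A(\setM_B T(x)\xi).
\end{align*}
The second summand is direct: $|A(v)| = |v|^{p-1}$ together with \eqref{eq:MB2}, \eqref{eq:MB3} and $|\identity - T^t(x)| \leq c\|\nabla\psi\|_\infty$ bound it by $c\|\nabla\psi\|_\infty\,\omega_B^p\,|T(x)\xi|^{p-1}$.

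For the first summand I would apply Lemma \ref{lem:equivA} with $P := \setM_B\xi$ and $Q := \setM_B T(x)\xi$, which, together with $\phi'_a(t) \eqsim (a\vee t)^{p-2} t$ and the trivial inequality $|P-Q| \leq 2(|P|\vee|Q|)$, yields $|A(P)-A(Q)| \lesssim (|P|\vee|Q|)^{p-2}|P-Q|$. Using \eqref{eq:MB2} I get $|P|\vee|Q| \eqsim \omega_B |\xi|$ (with the lower bound $\geq \Lambda^{-1}\omega_B|\xi|$ being essential when $p<2$, since $(\cdot)^{p-2}$ is then decreasing), while $|P-Q| = |\setM_B(\identity - T(x))\xi| \leq c\,\omega_B\|\nabla\psi\|_\infty |\xi|$. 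Multiplying by $|\setM_B| \leq \omega_B$ bounds the first summand by $c\|\nabla\psi\|_\infty\,\omega_B^p\,|\xi|^{p-1}$. Combining the two summands and invoking $|\xi|\eqsim|T(x)\xi|$ from \eqref{eq:T} to interchange the right‐hand factor gives \eqref{eq:diff.A2} with the minimum.

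The only delicate point is the correct treatment of $(|P|\vee|Q|)^{p-2}$ in the sub-quadratic regime $p<2$, which is handled by the two-sided estimate \eqref{eq:MB2}; everything else is a short manipulation of the decomposition above.
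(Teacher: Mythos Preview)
Your proposal is correct and follows essentially the same approach as the paper: the same two-term decomposition of $\mathcal{A}_B(\xi)-\mathcal{A}_\Psi(x,\xi)$, the same use of Lemma~\ref{lem:equivA} and \eqref{eq:phi2} for the $A$-difference, and the same appeal to $|\xi|\eqsim|T(x)\xi|$ to pass to the minimum. The only cosmetic differences are that the paper derives $|T-\identity|=|T^{-1}-\identity|$ from the nilpotency $(T-\identity)^2=0$ rather than from the explicit block form, and it routes the estimate of $(|P|\vee|Q|\vee|P-Q|)^{p-2}|P-Q|$ through $\dfrac{(|P|+|Q|)^{p-1}}{|P|}$ instead of your direct reduction via $|P-Q|\le 2(|P|\vee|Q|)$; your version is arguably cleaner and handles the $p<2$ case just as you note.
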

\begin{proof}
First, since $(T(x)-\identity)^2=((\nabla\Psi(x))^{-1}-\identity)^2=0$, we have $T^{-1}(x)-\identity=\identity-T(x)$. Then
\begin{align}\label{eq:diff1}
|T^{-1}(x)-\identity|=|\identity-T(x)|\leq n\|\nabla\psi\|_{\infty}
\end{align}
holds, and so together with $\|\nabla\psi\|_{\infty}\leq\frac{1}{2n}$, it follows that
\begin{align}\label{eq:diff2}
\frac{1}{2}\leq|\identity|-n\|\nabla\psi\|_{\infty}\leq|T(x)|\leq|\identity|+n\|\nabla\psi\|_{\infty}\leq\frac{3}{2}.
\end{align}
Hence we have $|T(x)|\eqsim c$. Similarly, we have $|T^{-1}(x)|\eqsim c$.

On the other hand, observe that by \eqref{eq:MB2},
\begin{align*}
|\mathcal{A}_B(\xi)-\mathcal{A}_{\Psi}(x,\xi)|
&=\left||\setM_B\xi|^{p-2}\setM^2_{B}\xi-|\setM_B T(x)\xi|^{p-2}T^{t}(x)\setM^2_{B}T(x)\xi\right|\\
&\leq\left|\setM_B|\setM_B\xi|^{p-2}\setM_B\xi-\setM_B|\setM_BT(x)\xi|^{p-2}\setM_BT(x)\xi\right|\\
&\quad+|\setM_B-T^{t}(x)\setM_B|\cdot\left||\setM_BT(x)\xi|^{p-2}\setM_BT(x)\xi\right|\\
&\lesssim\omega_B\cdot|A(\setM_B\xi)-A(\setM_BT(x)\xi)|+\omega_B\cdot|\identity-T^{t}(x)|\cdot|\setM_BT(x)\xi|^{p-1}.
\end{align*}
Here, by \eqref{eq:T} and \eqref{eq:MB2},
\begin{align*}
|\setM_{B_r}\xi|=|\setM_{B_r}T^{-1}(x)T(x)\xi|\lesssim|\setM_{B_r}|\cdot|T^{-1}(x)|\cdot|T(x)\xi|\lesssim|\setM_{B_r}|\cdot|T(x)\xi|\lesssim|\setM_{B_r}T(x)\xi|
\end{align*}
and similarly $|\setM_{B_r}T(x)\xi|\lesssim |\setM_{B_r}T(x)T^{-1}(x)\xi|= |\setM_{B_r}\xi|$ holds. Thus we have $|\setM_{B_r}T(x)\xi|\eqsim|\setM_{B_r}\xi|$. Then together with \eqref{eq:diff1} and Lemma \ref{lem:equivA}, there holds
\begin{align*}
|A(\setM_{B_r}\xi)&-A(\setM_{B_r}T(x)\xi)|\eqsim\phi'_{|\setM_{B_r}\xi|\vee|\setM_{B_r}T(x)\xi|}\left(|\setM_{B_r}\xi-\setM_{B_r}T(x)\xi|\right)\\
&\eqsim\left(|\setM_{B_r}\xi|\vee|\setM_{B_r}T(x)\xi|\vee|\setM_{B_r}\xi-\setM_{B_r}T(x)\xi|\right)^{p-2}|\setM_{B_r}\xi-\setM_{B_r}T(x)\xi|\\
&\lesssim\omega_{B_r}|\identity-T(x)||\xi|\left(|\setM_{B_r}\xi|\vee|\setM_{B_r}T(x)\xi|\vee|\setM_{B_r}\xi-\setM_{B_r}T(x)\xi|\right)^{p-2}\\
&\lesssim\omega_{B_r}|\identity-T(x)||\xi|\dfrac{\left(|\setM_{B_r}\xi|+|\setM_{B_r}T(x)\xi|\right)^{p-1}}{|\setM_{B_r}\xi|}\\
&\lesssim\Lambda|\identity-T(x)|\left(|\setM_{B_r}\xi|+|\setM_{B_r}T(x)\xi|\right)^{p-1}\\
&\lesssim\|\nabla\psi\|_{\infty}|\setM_{B_r}T(x)\xi|^{p-1}.
\end{align*} 
Thus, together with $|\identity-T^t(x)|=|(\identity-T(x))^t|=|\identity-T(x)|\leq n\|\nabla\psi\|_{\infty}$, we have
\begin{align*}
|\mathcal{A}_{B_r}(\xi)-\mathcal{A}_{\Psi}(x,\xi)|\lesssim\omega_{B_r}\|\nabla\psi\|_{\infty}|\setM_{B_r}T(x)\xi|^{p-1}\lesssim\omega_{B_r}^p\|\nabla\psi\|_{\infty}|T(x)\xi|^{p-1}.
\end{align*}
Finally, since $\omega_{B_r}|T(x)\xi|\eqsim|\setM_{B_r}T(x)\xi|\eqsim|\setM_{B_r}\xi|\eqsim\omega_{B_r}|\xi|$, we get the conclusion.
\end{proof}

Now, we can compute the comparison estimate.

\begin{proposition}[{Second comparison at boundary}]\label{prop:comp2}
	Assuming \eqref{eq:Omega}, let $v$ be as in \eqref{eq:2comp1} and $h$ be as in \eqref{eq:2nd}. There exists $\kappa_4=\kappa_4(n,p,\Lambda)$ such that if $\lvert\log\setM|_{\setBMO(4B_r)}\leq\kappa_4$, then for some $c=c(n,p,\Lambda)$, we have 
	\begin{align}\label{eq:2comp}
	\begin{split}
	\dashint_{\frac{1}{2}\Omega_{r}}&|\mathcal{V}_{B_r}(\nabla v)-\mathcal{V}_{B_r}(\nabla h)|^2\,dx\leq c\|\nabla\psi\|^2_{\infty}\dashint_{\Omega_{r}}(|\nabla h|^p\omega_{B_r}^p)\,dx.
	\end{split}
	\end{align}
\end{proposition}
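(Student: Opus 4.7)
The plan is to use a standard energy comparison combined with the pointwise perturbation bound of Lemma~\ref{lem:diff.A2}. Since $v-h\in W^{1,p}_{0,\omega_{B_r}}(\tfrac{1}{2}\Omega_r)$ by construction, I would test it against both weak formulations: $\divergence\mathcal{A}_{B_r}(\nabla h)=0$ holds on $\Omega_r\supset\tfrac{1}{2}\Omega_r$ (extending the test function by zero), while $\divergence\mathcal{A}_{\Psi}(\cdot,\nabla v)=0$ holds on $\tfrac{1}{2}\Omega_r$. Subtracting the resulting identities yields
\begin{align*}
\int_{\frac{1}{2}\Omega_r}(\mathcal{A}_{\Psi}(x,\nabla v)-\mathcal{A}_{\Psi}(x,\nabla h))\cdot(\nabla v-\nabla h)\,dx = \int_{\frac{1}{2}\Omega_r}(\mathcal{A}_{B_r}(\nabla h)-\mathcal{A}_{\Psi}(x,\nabla h))\cdot(\nabla v-\nabla h)\,dx.
\end{align*}

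For the left-hand side, I would write $\mathcal{A}_{\Psi}(x,\xi)=T^{t}(x)\setM_{B_r}A(\setM_{B_r}T(x)\xi)$ and move $T^{t}(x)$ across the scalar product, so that with $P:=\setM_{B_r}T(x)\nabla v$ and $Q:=\setM_{B_r}T(x)\nabla h$ Lemma~\ref{lem:equivA} gives $(A(P)-A(Q))\cdot(P-Q)\eqsim\phi_{|Q|}(|P-Q|)$. The near-identity property $|T(x)|\eqsim 1$ from Lemma~\ref{lem:diff.A2} combined with \eqref{eq:MB2} implies $|Q|\eqsim|\setM_{B_r}\nabla h|$ and $|P-Q|\eqsim|\setM_{B_r}(\nabla v-\nabla h)|$, and the stability of $\phi_a(t)\eqsim(a\vee t)^{p-2}t^{2}$ under bounded rescalings of shift and argument yields
\begin{align*}
\int_{\frac{1}{2}\Omega_r}(\mathcal{A}_{\Psi}(x,\nabla v)-\mathcal{A}_{\Psi}(x,\nabla h))\cdot(\nabla v-\nabla h)\,dx \gtrsim \int_{\frac{1}{2}\Omega_r}|\mathcal{V}_{B_r}(\nabla v)-\mathcal{V}_{B_r}(\nabla h)|^{2}\,dx.
\end{align*}

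For the right-hand side, Lemma~\ref{lem:diff.A2} gives $|\mathcal{A}_{B_r}(\nabla h)-\mathcal{A}_{\Psi}(x,\nabla h)|\lesssim\|\nabla\psi\|_{\infty}\omega_{B_r}^{p}|\nabla h|^{p-1}$, so after absorbing $\omega_{B_r}$-factors via \eqref{eq:MB2} the integrand is bounded by $c\|\nabla\psi\|_{\infty}|\setM_{B_r}\nabla h|^{p-1}|\setM_{B_r}(\nabla v-\nabla h)|$. Writing $a:=|\setM_{B_r}\nabla h|$, $t:=|\setM_{B_r}(\nabla v-\nabla h)|$, and $s:=\|\nabla\psi\|_{\infty}\phi'(a)=\|\nabla\psi\|_{\infty}a^{p-1}$, I would apply the shifted Young inequality~\eqref{eq:st} to $s$ and $t$ with shift $a$. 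Using~\eqref{eq:phistar2} to rewrite $(\phi_{a})^{*}=(\phi^{*})_{\phi'(a)}$ together with $(\phi^{*})_{b}(u)\eqsim(b\vee u)^{p'-2}u^{2}$, the assumption $\|\nabla\psi\|_{\infty}\leq 1$ (implied by \eqref{eq:Omega}) guarantees $s\leq\phi'(a)$, whence $(\phi_{a})^{*}(s)\eqsim\phi'(a)^{p'-2}s^{2}=\|\nabla\psi\|_{\infty}^{2}\phi'(a)^{p'}\eqsim\|\nabla\psi\|_{\infty}^{2}|\setM_{B_r}\nabla h|^{p}$ via $(p-1)p'=p$. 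Choosing $\epsilon$ small absorbs the $\phi_{a}(t)\eqsim|\mathcal{V}_{B_r}(\nabla v)-\mathcal{V}_{B_r}(\nabla h)|^{2}$ term into the left-hand side; enlarging the remaining right-hand integral from $\tfrac{1}{2}\Omega_r$ to $\Omega_r$ by non-negativity and invoking $|\tfrac{1}{2}\Omega_r|\eqsim|\Omega_r|\eqsim|B_r|$ from Remark~\ref{rmk:Psi} converts integrals to averages and produces~\eqref{eq:2comp}.

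The main subtlety is extracting the sharp quadratic power $\|\nabla\psi\|_{\infty}^{2}$ uniformly in $p$: naive applications of Young's inequality with conjugate exponents $p,p'$ fail to simultaneously produce both the correct $\|\nabla\psi\|_{\infty}^{2}$ coefficient on $|\setM_{B_r}\nabla h|^{p}$ and an absorbable shifted $\phi_a$-term in the difference $|\setM_{B_r}(\nabla v-\nabla h)|$. Exploiting the quadratic behaviour of $(\phi^{*})_{\phi'(a)}$ below $\phi'(a)$ extracts both simultaneously, mirroring the fact that linearising the nonlinear operator around $\nabla h$ already produces a quadratic problem.
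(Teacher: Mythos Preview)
Your argument is correct and runs on the same machinery as the paper --- test $v-h$, use Lemma~\ref{lem:equivA} for the monotone part, Lemma~\ref{lem:diff.A2} for the perturbation, and extract $\|\nabla\psi\|_\infty^2$ via the quadratic behaviour of $(\phi^*)_{\phi'(a)}$ below $\phi'(a)$ (the paper phrases this last step through~\eqref{eq:equiv.phi}, but it is the same computation).

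The one genuine difference is the choice of splitting. The paper writes the identity as
\[
\dashint_{\frac{1}{2}\Omega_r}\big(\mathcal{A}_{B_r}(\nabla v)-\mathcal{A}_{B_r}(\nabla h)\big)\cdot(\nabla v-\nabla h)\,dx
=\dashint_{\frac{1}{2}\Omega_r}\big(\mathcal{A}_{B_r}(\nabla v)-\mathcal{A}_{\Psi}(x,\nabla v)\big)\cdot(\nabla v-\nabla h)\,dx,
\]
so the monotonicity is that of the constant-coefficient operator $\mathcal{A}_{B_r}$ and the perturbation is evaluated at $\nabla v$; this forces an extra step at the end, namely the minimizing property~\eqref{eq:mini2} of $v$ together with $|T|\eqsim 1$ to pass from $\int_{\frac12\Omega_r}|\nabla v|^p\omega_{B_r}^p$ back to $\int_{\Omega_r}|\nabla h|^p\omega_{B_r}^p$. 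Your decomposition places the monotonicity on $\mathcal{A}_\Psi$ and the perturbation at $\nabla h$, so the right-hand side already lands on $|\nabla h|^p\omega_{B_r}^p$ and the minimizing step is not needed. The price you pay is the equivalence $\phi_{|\setM_{B_r}T\nabla h|}(|\setM_{B_r}T(\nabla v-\nabla h)|)\eqsim\phi_{|\setM_{B_r}\nabla h|}(|\setM_{B_r}(\nabla v-\nabla h)|)$ on the left, but this follows immediately from $|\setM_{B_r}T\xi|\eqsim|\setM_{B_r}\xi|$ (established inside the proof of Lemma~\ref{lem:diff.A2}) and the stability of $\phi_a(t)\eqsim(a\vee t)^{p-2}t^2$ under comparable rescalings of $a$ and $t$. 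Both routes are equally short; yours is marginally more self-contained.
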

\begin{proof}
Test $v-h$ to \eqref{eq:2nd} and \eqref{eq:2comp1} to have
\begin{align}\label{eq:2comp4}
\begin{split}
\dashint_{\frac{1}{2}\Omega_{r}}&\left(\mathcal{A}_{B_r}(\nabla v)-\mathcal{A}_{B_r}(\nabla h)\right)\cdot(\nabla v-\nabla h)\,dx\\
&=\dashint_{\frac{1}{2}\Omega_{r}}\left(\mathcal{A}_{B_r}(\nabla v)-\mathcal{A}_{\Psi}(x,\nabla v)\right)\cdot(\nabla  v-\nabla h)\,dx.
\end{split}
\end{align}
We apply Lemma \ref{lem:equivA}, \eqref{eq:diff.A2} and then use Young's inequality to obtain
\begin{align}\label{eq:2comp5}
\begin{split}
\dashint_{\frac{1}{2}\Omega_{r}}|\mathcal{V}_{B_r}(\nabla v)-\mathcal{V}_{B_r}(\nabla  h)|^2\,dx&\eqsim\dashint_{\frac{1}{2}\Omega_{r}}\phi_{|\nabla v|}(|\nabla  v-\nabla h|)\omega^p_{B_r}\,dx\\
&\lesssim\dashint_{\frac{1}{2}\Omega_{r}}|\mathcal{A}_{\Psi}(x,\nabla v)-\mathcal{A}_{B_r}(\nabla v)||\nabla v-\nabla h|\,dx\\
&\lesssim\dashint_{\frac{1}{2}\Omega_{r}}\|\nabla\psi\|_{\infty}\omega_{B_r}^p\phi'(|\nabla v|)|\nabla v-\nabla h|\,dx\\
&\leq\sigma\dashint_{\frac{1}{2}\Omega_{r}}\phi_{|\nabla v|}(|\nabla  v-\nabla h|)\omega^p_{B_r}\,dx\\
&\quad+c(\sigma)\dashint_{\frac{1}{2}\Omega_{r}}(\phi_{|\nabla v|})^{*}(\|\nabla\psi\|_{\infty}\phi'(|\nabla v|))\omega^{p}_{B_r}\,dx=:I_1+I_2
\end{split}
\end{align}
for any $\sigma\in(0,1)$. Then $I_1$ is absorbed to the left-hand side by choosing $\sigma$ sufficiently small depending on $n,p$ and $\Lambda$. To estimate $I_2$, we use \eqref{eq:equiv.phi}, $(\phi^{*})_{\phi'(|a|)}(\phi'(|a|))\eqsim\phi(|a|)$ and then \eqref{eq:T} to have
\begin{align}\label{eq:2comp7}
\begin{split}
I_2&\lesssim\dashint_{\frac{1}{2}\Omega_{r}}(\phi^*)_{\phi'(|\nabla v|)}(\|\nabla\psi\|_{\infty}\phi'(|\nabla v|))\omega^{p}_{B_r}\,dx\\
&\lesssim\|\nabla\psi\|_{\infty}^2\dashint_{\frac{1}{2}\Omega_{r}}(\phi^*)_{\phi'(|\nabla v|)}(\phi'(|\nabla v|))\omega^{p}_{B_r}\,dx\\
&\lesssim\|\nabla\psi\|_{\infty}^2\dashint_{\frac{1}{2}\Omega_{r}}\phi(|\nabla v|)\omega^p_{B_r}\,dx\\
&\lesssim\|\nabla\psi\|_{\infty}^2\dashint_{\frac{1}{2}\Omega_{r}}\phi(|T(x)\nabla v|)\omega^p_{B_r}\,dx.
\end{split}
\end{align}
Now, we use minimizing property of $\nabla v$ together with \eqref{eq:MB2}, \eqref{eq:T} and $|\Omega_r|\eqsim |\frac{1}{2}\Omega_r|$ to have
\begin{align}\label{eq:2comp9}
\begin{split}
\dashint_{\frac{1}{2}\Omega_{r}}\phi(|T(x)\nabla v|)\omega^p_{B_r}\,dx&\lesssim\dashint_{\frac{1}{2}\Omega_{r}}\phi(|T(x)\nabla h|)\omega^p_{B_r}\,dx\lesssim\dashint_{\Omega_{r}}\phi(|\nabla h|)\omega^p_{B_r}\,dx.
\end{split}
\end{align}
Summing up the above estimates, we obtain \eqref{eq:2comp}.
\end{proof}

\subsection{Decay Estimates at Boundary} Before providing decay estimates of $\mathcal{V}(\cdot,\nabla z)$, we discuss some regularity results and corresponding estimates related to $\tilde{v}$ and $v$ which are defined in \eqref{eq:2comp3} and \eqref{eq:2comp2}, respectively. First, we have the following estimates which imply Lipschitz regularity and $C^{1,\alpha}$ regularity of $\tilde{v}$.

\begin{proposition}\label{prop:regy}
	Assuming \eqref{eq:Omega}, let $\tilde{v}$ be the solution of \eqref{eq:2nd}. Then there holds
	\begin{align}\label{eq:lip.regy}
	\sup_{\frac{1}{32}B_r\cap\setR^n_+}|\nabla \tilde{v}|^{p}\omega^p_{B_r}\leq c\dashint_{\frac{1}{4}B_r\cap\setR^n_+}|\nabla \tilde{v}|^p\omega^p_{B_r}\,dy.	
	\end{align}
	Moreover, there exist $\alpha=\alpha(n,p,\Lambda)\in(0,1)$ and $c=c(n,p,\Lambda)>0$ such that
	\begin{align}\label{eq:decayy}
	\begin{split}
	&\dashint_{\lambda B_r\cap\setR^n_+}|\mathcal{V}_{B_r}(\nabla \tilde{v})-\mean{\mathcal{V}_{B_r}(\nabla \tilde{v})}_{\lambda {B_r}\cap\setR^n_+}|^2\,dy\\
	&\quad\quad\quad\quad\leq c\lambda^{2\alpha}\dashint_{\frac{1}{8}{B_r}\cap\setR^n_+}|\mathcal{V}_{B_r}(\nabla \tilde{v})-\mean{\mathcal{V}_{B_r}(\nabla \tilde{v})}_{\frac{1}{8}B_r\cap\setR^n_+}|^2\,dy
	\end{split}
	\end{align}
	holds for all $\lambda\in(0,\frac{1}{80})$.
\end{proposition}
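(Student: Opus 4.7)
The plan is to reduce \eqref{eq:2comp3} to a standard (unweighted) $p$-Laplace equation via a linear change of variables, extend by odd reflection across the flat boundary, and then invoke classical interior regularity for $p$-harmonic functions. Set $M := \setM_{B_r}$ and introduce new coordinates $z := M^{-1}y$ with $w(z) := \tilde{v}(Mz)$. A direct computation gives $\nabla_z w(z) = M\nabla_y\tilde{v}(Mz)$, so substituting into the weak form of \eqref{eq:2comp3} and using symmetry of $M$ to move one factor of $M$ onto the test function reduces the equation to $-\divergence_z(|\nabla_z w|^{p-2}\nabla_z w)=0$ on the transformed domain $M^{-1}\Psi(\tfrac{1}{2}\Omega_r)$.

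Next I exploit the zero Dirichlet data on the flat part of the boundary. Since $\tilde{h}(y)=h(\Psi^{-1}(y))$ with $h = u\eta^{p'}$ and $u=0$ on $\partial\Omega$, the function $\tilde{v}$ vanishes on $\Psi(\tfrac{1}{2}\Omega_r)\cap\{y_n=0\}$, which in the new coordinates becomes the hyperplane $\{z:(Me_n)\cdot z=0\}$. Because $w$ is a standard $p$-harmonic function with zero boundary values on a flat hyperplane, the classical reflection principle (odd reflection across this hyperplane preserves the weak form of the $p$-Laplacian) extends $w$ to a $p$-harmonic function on an open set symmetric across this hyperplane. Standard interior regularity for $p$-harmonic functions (Uhlenbeck, Tolksdorf, DiBenedetto) then provides both the sup-bound
$\sup_{B'}|\nabla w|^p \lesssim \dashint_{2B'}|\nabla w|^p\,dz$
and the $C^{1,\alpha}$-type decay estimate
\begin{align*}
\dashint_{\lambda B'}|V(\nabla w)-\mean{V(\nabla w)}_{\lambda B'}|^2\,dz \lesssim \lambda^{2\alpha}\dashint_{B'}|V(\nabla w)-\mean{V(\nabla w)}_{B'}|^2\,dz
\end{align*}
for some $\alpha=\alpha(n,p)\in(0,1)$. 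Translating back via $\nabla_z w = M\nabla_y\tilde{v}$ and $|\nabla_z w|\eqsim\omega_{B_r}|\nabla_y\tilde{v}|$ (from \eqref{eq:MB2}) yields \eqref{eq:lip.regy} and \eqref{eq:decayy}.

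The main technical subtlety is geometric: the linear map $M^{-1}$ sends Euclidean balls to ellipsoids whose aspect ratio is bounded by $|M|\,|M^{-1}|\leq\Lambda$, so each Euclidean half-ball $\tfrac{1}{32}B_r\cap\setR^n_+$, $\tfrac{1}{8}B_r\cap\setR^n_+$, $\tfrac{1}{4}B_r\cap\setR^n_+$ transforms into a set sandwiched between two concentric Euclidean half-balls whose radii are comparable up to factors depending only on $n$ and $\Lambda$. The geometric gap between $\tfrac{1}{32}$ and $\tfrac{1}{4}$ in \eqref{eq:lip.regy}, and similarly between $\lambda<\tfrac{1}{80}$ and $\tfrac{1}{8}$ in \eqref{eq:decayy}, is precisely what is needed to absorb this ellipsoidal distortion when applying the classical regularity on round balls in the reflected $z$-picture and restricting the resulting estimate back to the stated half-balls in the $y$-picture.
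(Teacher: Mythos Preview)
Your reduction via $z=M^{-1}y$ to the unweighted $p$-Laplacian, followed by odd reflection across the flat part and interior $C^{1,\alpha}$ regularity, is exactly the mechanism the paper uses in its Step~2 (there via the factorisation $\setM_{B_r}=Q\mathbb{D}_{B_r}Q^*$ and a further rotation~$\tilde Q$; your single map $M^{-1}$ is equivalent and slightly cleaner). So the core idea is correct.

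There is, however, a genuine gap. Your sketch treats only the situation in which the reflected $p$-harmonic extension is defined on a region containing the \emph{full} concentric balls you need. After odd reflection across the hyperplane $\{(Me_n)\cdot z=0\}$, the extended function is $p$-harmonic on $M^{-1}\Psi(\tfrac12\Omega_r)$ together with its mirror image. When the centre of $B_r$ lies strictly above $\{y_n=0\}$ (equivalently, off the tilted hyperplane in $z$-coordinates), this symmetric region need not contain the full ball $\tfrac{1}{32}B_r$ or $\lambda B_r$: the mirror image of $\tfrac14 B_r\cap\setR^n_+$ is a half-ball centred at the reflected point, not the lower half of $\tfrac14 B_r$. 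Hence the interior estimate cannot be applied directly on these off-centre balls. The paper handles this in its Step~3 by a case split: if the centre is high enough ($z_n>\tfrac{r}{10}$) one uses purely interior estimates; otherwise one passes to a nearby ball centred on $\{y_n=0\}$, applies the Step~2 boundary estimate there, and transfers back. The specific thresholds $\tfrac{1}{32}$, $\tfrac18$, $\tfrac14$ and $\lambda<\tfrac{1}{80}$ are chosen so that these inclusions go through, not only to absorb the ellipsoidal distortion from $M^{-1}$ as you suggest. Your sketch should include this off-centre case analysis (or an equivalent argument) to be complete.
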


\begin{proof}
We first show \eqref{eq:decayy}. Throughout the proof of \eqref{eq:decayy}, let us write the center of the ball $B_r$ as $y_{B_r}$. 
\\
\textbf{Step 1.}
If $\frac{1}{8}B_r\subset\{y\in\setR^n:y_n\geq 0\}$, then it directly follows from \cite[Proposition 15]{BDGN}. 
\\
\textbf{Step 2.}
Now, we consider $z_{B_r}\in\{y\in\setR^n:y_n=0\}$. We have by \eqref{eq:MB3} that
\begin{align*}
\Lambda^{-1}\omega_{B_r}\identity\leq\setM_{B_r}\leq\omega_{B_r}\identity.
\end{align*}
Since the equation \eqref{eq:2comp3} and estimate \eqref{eq:decayy} are invariant under normalization, without loss of generality we let $\omega_{B_r}=1$. Also, assume that ${B_r}$ is centered at 0, i.e., $z_{B_r}=0$.

Since $\setM_{B_r}$ is symmetric, there is an orthogonal matrix $Q$ and a diagonal matrix $\mathbb{D}_{B_r}$ such that $\setM_{B_r}=Q\mathbb{D}_{B_r}Q^*$. Then $\tilde{w}_0(y):=\tilde{v}(Qy)$ is a solution of \eqref{eq:2comp3} with $\mathbb{D}_{B_r}$ instead of $\setM_{B_r}$. Notice that the boundary of the domain is also rotated, and for $\mathbb{D}_{B_r}$ we have
\begin{align}\label{eq:D}
\Lambda^{-1}\identity\leq\mathbb{D}_{B_r}\leq \identity.
\end{align}
Now, we apply an anisotropic scaling $y\mapsto\mathbb{D}^{-1}_{{B_r}}y$. This turns estimates on half balls (with the rotated flat part) into estimates on half-ellipses (with  the rotated flat part) of uniformly bounded eccentricity depending on $n$ and $\Lambda$. Thus, after properly rotating the coordinate axis to make the rotated flat part to the subset of $\{y\in\setR^n:y_n=0\}$, we can take the odd extension to \eqref{eq:2comp3} to obtain \eqref{eq:decayy}.

In detail, let $\tilde{Q}$ be an $n\times n$ orthogonal matrix which maps $\mathbb{D}^{-1}_{{B_r}}Q^*(\{y\in\setR^n:y_n=0\})$ to $\{y\in\setR^n:y_n=0\}$. In other words, $\tilde{Q}$ satisfies %This $\tilde{Q}$ exists since if we write 
%\begin{align}\label{eq:mat1}
%\tilde{Q}=(\tilde{Q}_{ij})_{1\leq i,j\leq n},\quad\mathbb{D}^{-1}_{{B_r}}=(\mathbb{D}^{-1}_{{B_r},ij})_{1\leq i,j\leq n},\quad Q^*=(Q^*_{ij})_{1\leq i,j\leq n},
%\end{align}
%then 
\begin{align}\label{eq:mat2}
\tilde{Q}(\mathbb{D}^{-1}_{{B_r}}Q^*(\{y\in\setR^n:y_n=0\}))=\{y\in\setR^n:y_n=0\}.
\end{align}
%is equivalent to say that
%\begin{align}\label{eq:mat3}
%\sum^n_{k,l=1}\tilde{Q}_{nk}\mathbb{D}^{-1}_{{B_r},kl}Q^*_{lj}=0\quad\text{for each }1\leq j\leq n-1.
%\end{align}
%Since $\mathbb{D}^{-1}_{{B_r},ij}=0$ if $i\neq j$, we can rewrite \eqref{eq:mat3} as
%\begin{align}\label{eq:mat4}
%\sum^n_{k=1}\tilde{Q}_{nk}\mathbb{D}^{-1}_{{B_r},kk}Q^*_{kj}=0\quad\text{for each }1\leq j\leq n-1.
%\end{align}
%The above system of linear equation consists of $(n-1)$ linear equations and $n$ indeterminates $\tilde{Q}_{n1},\dots,\tilde{Q}_{nn}$, so if we take $\tilde{Q}_{nn}=1$, then \eqref{eq:mat4} is equivalent to say that
%\begin{align}\label{eq:mat5}
%\sum^{n-1}_{k=1}\tilde{Q}_{nk}\mathbb{D}^{-1}_{{B_r},kk}Q^*_{kj}=\tilde{Q}_{nn}\mathbb{D}^{-1}_{{B_r},nn}Q^*_{nj}=\mathbb{D}^{-1}_{{B_r},nn}Q^*_{nj}\quad\text{for each }1\leq j\leq n-1.
%\end{align}
%Then the $(n-1)\times(n-1)$ matrices $\hat{\mathbb{D}}_{B_r}=(\mathbb{D}^{-1}_{{B_r},ij})_{1\leq i,j\leq n-1}$ and $\hat{Q}^*=(Q^*_{ij})_{1\leq i,j\leq n-1}$ are invertible, so we determine $\tilde{Q}_{nj}$ as follows:
%\begin{align}\label{eq:mat6}
%\tilde{Q}_{nj}=\mathbb{D}^{-1}_{{B_r},nn}\sum^{n-1}_{k=1}Q^*_{nk}((\hat{\mathbb{D}}^{-1}_{B_r}\hat{Q}^*)^{-1})_{kj}\quad\text{for each }1\leq j\leq n-1.
%\end{align}
%Then we find $n$-th row of $\tilde{Q}$. To find the other rows of $\tilde{Q}$, we apply Gram-Schmidt orthonormalization for the unit vectors $e_1=(1,0,\dots,0),\dots, e_{n-1}=(0,\dots,0,1,0)$ with respect to $\tilde{Q}_n=(\tilde{Q}_{n1},\dots,\tilde{Q}_{nn})$.} 
Now, using this $n\times n$ orthogonal matrix $\tilde{Q}$, define
\begin{align}\label{eq:mat7}
\tilde{w}(y):=\tilde{v}(\tilde{Q}\mathbb{D}_{B_r}Q^*y).
\end{align}
Then we have
\begin{align}\label{eq:mat8}
(Q\mathbb{D}_{B_r}^{-1}(\tilde{Q})^*)^*\setM_{{B_r}}^2(Q\mathbb{D}_{B_r}^{-1}(\tilde{Q})^*)=\tilde{Q}\mathbb{D}^{-1}_{{B_r}}Q^*\setM^2_{{B_r}}Q\mathbb{D}^{-1}_{B_r}(\tilde{Q})^*=\tilde{Q}\mathbb{D}^{-1}_{{B_r}}\mathbb{D}_{{B_r}}^2\mathbb{D}^{-1}_{B_r}(\tilde{Q})^*=\identity
\end{align}
and for $t=Q\mathbb{D}^{-1}_{B_r}(\tilde{Q})^*y$, we have
\begin{align}\label{eq:mat9}
\tilde{w}(Q\mathbb{D}^{-1}_{B_r}(\tilde{Q})^*y)=\tilde{v}(y)=\tilde{w}(t)
\end{align}
and so
\begin{align}\label{eq:mat10}
\nabla_y \tilde{v}(y)=\nabla_y\tilde{w}(t)=(Q\mathbb{D}^{-1}_{{B_r}}(\tilde{Q})^*)\nabla_t\tilde{w}(t).
\end{align}
Therefore, \eqref{eq:2comp3} defined in $\Psi(\tfrac{1}{2}\Omega_r)\subset {B_r}$ transforms into
\begin{align*}%\label{eq:mat11}
\begin{split}
&\divergence_y(|\setM_{B_r}\nabla \tilde{v}|^{p-2}\setM_{B_r}^2\nabla \tilde{v})=\divergence_y(\left<\setM_{B_r}\nabla \tilde{v},\setM_{B_r}\nabla \tilde{v}\right>^{\frac{p-2}{2}}\setM_{B_r}^2\nabla \tilde{v})\\
&=\divergence_t\left((Q\mathbb{D}^{-1}_{B_r}(\tilde{Q})^*)^*\left<\setM_{B_r}Q\mathbb{D}^{-1}_{B_r}(\tilde{Q})^*\nabla_t\tilde{w},\setM_{B_r}Q\mathbb{D}^{-1}_{B_r}(\tilde{Q})^*\nabla_t\tilde{w}\right>^{\frac{p-2}{2}}\setM_{B_r}^2Q\mathbb{D}^{-1}_{{B_r}}(\tilde{Q})^*\nabla_t\tilde{w}\right)\\
&=\divergence_t\left(\left<(\setM_{B_r}Q\mathbb{D}^{-1}_{B_r}(\tilde{Q})^*)^*\setM_{B_r}Q\mathbb{D}^{-1}_{B_r}(\tilde{Q})^*\nabla_t\tilde{w},\nabla_t\tilde{w}\right>^{\frac{p-2}{2}}\nabla_t\tilde{w}\right)\\
&=\divergence_t(|\nabla_t\tilde{w}|^{p-2}\nabla_t\tilde{w})=0
\end{split}
\end{align*}
defined in $t\in Q\mathbb{D}^{-1}_{B_r}(\tilde{Q})^*(\{y\in\setR^n:y_n=0\})$. Note that since $\Psi(\tfrac{1}{2}\Omega_r)\subset\{y\in\setR:y_n=0\}$ and \eqref{eq:mat2} hold, we can employ \cite{M1} and apply the odd extension for  $\divergence_t(|\nabla_t\tilde{w}|^{p-2}\nabla_t\tilde{w})=0$ and get the analogous estimate to \eqref{eq:decayy} for $w$ with half-ellipses instead of half-balls. Using the relation \eqref{eq:mat8} and $\setM_{B_r}=Q\mathbb{D}_{B_r}Q^*$ for changing $w$ to $v$, and then using the fact that all balls can be covered by slightly enlarged ellipses and vice versa, the estimate \eqref{eq:decayy} is also true for half balls. Then we have \eqref{eq:decayy}.

\textbf{Step 3.}
Now, we consider the general case, i.e., $\frac{1}{4}{B_r}\not\subset\{y\in\setR^n:y_n\geq 0\}$ and $z=z_{B_r}\not\in\{y\in\setR^n:y_n=0\}$ holds. We employ the argument of \cite[Lemma 3.7]{KZ1}. Denote $z=(z_1,\dots,z_{n-1},z_n)$, $\bar{z}=(z_1,\dots,z_{n-1},0)$, and recall that $0<\lambda< \frac{1}{80}$ and $z_n>0$ since $\tilde{x}\in\Omega$, where $\tilde{x}$ is the center of the ball $B_r$ with $4B_r\subset 2B_0$.
Let us specify the exact center of the balls in this step. In particular, we write ${B_r}={B_r}(z)$.

\textit{Case 1.} $z_n>\tfrac{r}{10}$. In this case we have
\begin{align}\label{eq:decay1}
\lambda {B_{r}}(z)\subset \tfrac{1}{10}B_{r}(z)\subset \tfrac{z_n}{r}B_{r}(z)\subset\setR^n_+.
\end{align}
By the interior estimates in \cite[Theorem 6.4]{DieSV09}, we obtain
\begin{align}\label{eq:decay2}
\begin{split}
\dashint_{{\lambda B_{r}}(z)}&|\mathcal{V}_{B_r}(\nabla \tilde{v})-\mean{\mathcal{V}_{B_r}(\nabla \tilde{v})}_{\lambda B_{r}(z)}|^2\,dy\\
&\lesssim\left(\dfrac{\lambda}{1/10}\right)^{2\alpha}\dashint_{\frac{1}{10}B_{r}(z)}|\mathcal{V}_{B_r}(\nabla \tilde{v})-\mean{\mathcal{V}_{B_r}(\nabla \tilde{v})}_{\frac{1}{10}B_{r}(z)}|^2\,dy\\
&\lesssim\left(\frac{\lambda}{1/10}\right)^{2\alpha}\left(\dfrac{1}{|\frac{1}{10}B_{r}(z)|}\int_{\frac{1}{4}B_{r}(z)\cap\setR^n_+}|\mathcal{V}_{B_r}(\nabla \tilde{v})-\mean{\mathcal{V}_{B_r}(\nabla \tilde{v})}_{\frac{1}{4}B_{r}(z)\cap\setR^n_+}|^2\,dy\right)\\
&\lesssim\lambda^{2\alpha}\dashint_{\frac{1}{4}B_{r}(z)\cap\setR^n_+}|\mathcal{V}_{B_r}(\nabla \tilde{v})-\mean{\mathcal{V}_{B_r}(\nabla \tilde{v})}_{\frac{1}{4}B_{r}(z)\cap\setR^n_+}|^2\,dy.
\end{split}
\end{align}
Thus we obtain \eqref{eq:decayy} in this case.

\textit{Case 2.} $0<z_n\leq \frac{r}{10}$. We divide the proof into two subcases.

\textit{Subcase 1.} $0<\lambda<\frac{z_n}{4r}$. In this subcase
\begin{align}\label{eq:decay3}
\lambda B_{r}(z)\subset \tfrac{z_n}{4r}B_{r}(z)\subset \tfrac{5z_n}{4r}B^+_{r}(\bar{z}).
\end{align}
By the interior estimates in \cite[Theorem 6.4]{DieSV09}, we have
\begin{align}\label{eq:decay4}
\begin{split}
\dashint_{\lambda B_{r}(z)}&|\mathcal{V}_{B_r}(\nabla \tilde{v})-\mean{\mathcal{V}_{B_r}(\nabla \tilde{v})}_{\lambda B_{r}(z)}|^2\,dy\\
&\lesssim\left(\dfrac{4\lambda}{z_n/4r}\right)^{2\alpha}\dashint_{\frac{z_n}{4r}B_{r}(z)}|\mathcal{V}_{B_r}(\nabla \tilde{v})-\mean{\mathcal{V}_{B_r}(\nabla \tilde{v})}_{\frac{z_n}{4r}B_{r}(z)}|^2\,dy\\
&\lesssim\left(\dfrac{4\lambda}{z_n/4r}\right)^{2\alpha}\dashint_{\frac{5z_n}{4r}B^+_{r}(\bar{z})}|\mathcal{V}_{B_r}(\nabla \tilde{v})-\mean{\mathcal{V}_{B_r}(\nabla \tilde{v})}_{\frac{5z_n}{4r}B^+_{r}(\bar{z})}|^2\,dy.
\end{split}
\end{align}
Since $\bar{z}\in\{y\in\setR^n:z_n=0\}$, by \textbf{Step 2} above and then using $0<z_n\leq\frac{r}{10}$, we have
\begin{align}\label{eq:decay5}
\begin{split}
\dashint_{\frac{5z_n}{4r}B^+_{r}(\bar{z})}&|\mathcal{V}_{B_r}(\nabla \tilde{v})-\mean{\mathcal{V}_{B_r}(\nabla \tilde{v})}_{\frac{5z_n}{4r}B^+_{r}(\bar{z})}|^2\,dy\\
&\lesssim \left(\dfrac{5z_n}{r}\right)^{2\alpha}\dashint_{\frac{1}{8}B^+_{r}(\bar{z})}|\mathcal{V}_{B_r}(\nabla \tilde{v})-\mean{\mathcal{V}_{B_r}(\nabla \tilde{v})}_{\frac{1}{8}B^+_{r}(\bar{z})}|^2\,dy\\
&\lesssim \left(\dfrac{5z_n}{r}\right)^{2\alpha}\dashint_{\frac{1}{4}B_{r}(z)\cap\setR^n_+}|\mathcal{V}_{B_r}(\nabla \tilde{v})-\mean{\mathcal{V}_{B_r}(\nabla \tilde{v})}_{\frac{1}{4}B_{r}(z)\cap\setR^n_+}|^2\,dy.
\end{split}
\end{align}
Combining the above two estimates, \eqref{eq:decay4} and \eqref{eq:decay5}, we obtain \eqref{eq:decayy} in this subcase.

\textit{Subcase 2.} $\lambda\geq \tfrac{z_n}{4r}$. Since $\lambda\leq \frac{1}{40}$, we see that
\begin{align}\label{eq:decay6}
\lambda B_{r}(z)\cap\setR^n_+\subset 5\lambda B^+_{r}(\bar{z})\subset \tfrac{1}{8}B^+_{r}(\bar{z})\subset \tfrac{1}{4}B_{r}(z)\cap\setR^n_+.
\end{align}
Therefore, using the boundary estimate above in \textbf{Step 2}, we have
\begin{align}\label{eq:decay7}
\begin{split}
\dashint_{\lambda B_{r}(z)\cap\setR^n_+}&|\mathcal{V}_{B_r}(\nabla \tilde{v})-\mean{\mathcal{V}_{B_r}(\nabla \tilde{v})}_{\lambda B_{r}(z)\cap\setR^n_+}|^2\,dy\\
&\lesssim\dashint_{20\lambda B^+_{r}(\bar{z})}|\mathcal{V}_{B_r}(\nabla \tilde{v})-\mean{\mathcal{V}_{B_r}(\nabla \tilde{v})}_{20\lambda B^+_{r}(\bar{z})}|^2\,dy\\
&\lesssim\left(\dfrac{5\lambda}{1/8}\right)^{2\alpha}\dashint_{\frac{1}{8}B^+_{r}(\bar{z})}|\mathcal{V}_{B_r}(\nabla \tilde{v})-\mean{\mathcal{V}_{B_r}(\nabla \tilde{v})}_{\frac{1}{8}B^+_{r}(\bar{z})}|^2\,dy\\
&\lesssim\lambda^{2\alpha}\dashint_{\frac{1}{4}B_{r}(z)\cap\setR^n_+}|\mathcal{V}_{B_r}(\nabla \tilde{v})-\mean{\mathcal{V}_{B_r}(\nabla \tilde{v})}_{\frac{1}{4}B_{r}(z)\cap\setR^n_+}|^2\,dy.
\end{split}
\end{align}
Merging all cases \textit{Case 1}--\textit{Case 2}, we have \eqref{eq:decayy} in \textbf{Step 3}. Therefore, by \textbf{Step 1}--\textbf{Step 3}, we have \eqref{eq:decayy}.

To show \eqref{eq:lip.regy}, we employ the similar argument as above. If $\frac{1}{4}{B_r}\subset\{y\in\setR^n:y_n\geq 0\}$, then it follows from \cite[Proposition 15]{BDGN}. Now, when  $z_{{B_r}}\in\{y\in\setR^n:y_n=0\}$, by employing the same matrix $\tilde{Q}$, $\mathbb{D}_{B_r}$, $Q^*$ as above, we can apply \cite[Lemma 5]{L1} and we have \eqref{eq:lip.regy} in this case. In the general case, i.e., when $\frac{1}{4}{B_r}\not\subset\{y\in\setR^n:y_n\geq 0\}$ and $z_{B_r}\not\in\{y\in\setR^n:y_n=0\}$ holds, we divide the cases as same as above and apply the argument of \cite{L2} instead of \cite{DieSV09}. Now, \eqref{eq:lip.regy} is obtained.
\end{proof}

Now, we transform the above estimates for $\tilde{v}$ to the estimates for $v$.

\begin{proposition}\label{prop:regx}
	Assuming \eqref{eq:Omega}, let $v$ be the solution of \eqref{eq:2comp2}. Then there holds
	\begin{align}\label{eq:lip.regx}
	\sup_{\frac{1}{32}\Omega_r}|\nabla v|^{p}\omega^p_{{B_r}}\leq c\dashint_{\frac{1}{4}\Omega_r}|\nabla v|^p\omega^p_{{B_r}}\,dx.	
	\end{align}
	Moreover, there exist $\alpha=\alpha(n,p,\Lambda)\in(0,1)$ and $c=c(n,p,\Lambda)>0$ such that
	\begin{align}\label{eq:decayx}
	\begin{split}
	\dashint_{\lambda \Omega_r}&|\mathcal{V}_{B_r}(\nabla v)-\mean{\mathcal{V}_{B_r}(\nabla v)}_{\lambda \Omega_r}|^2\,dx\\
	&\leq c\lambda^{2\alpha}\dashint_{\frac{1}{4}\Omega_r}|\mathcal{V}_{{B_r}}(\nabla v)-\mean{\mathcal{V}_{{B_r}}(\nabla v)}_{\frac{1}{4}\Omega_r}|^2\,dx+c\|\nabla\psi\|^2_{\infty}\lambda^{-n}\dashint_{\frac{1}{4}\Omega_r}|\nabla v|^p\omega^p_{{B_r}}\,dx
	\end{split}
	\end{align}
	holds for all $\lambda\in(0,\frac{1}{80})$.
\end{proposition}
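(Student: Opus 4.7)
The plan is to transfer both estimates from Proposition~\ref{prop:regy} to the $x$-variable via the change of coordinates $y=\Psi(x)$. Recall that $|\det\nabla\Psi|\equiv 1$, the matrix $T(x)=(\nabla\Psi)^{-1}(x)$ satisfies $|T(x)|\eqsim 1$ and $|\identity-(\nabla\Psi)(x)|\leq n\|\nabla\psi\|_\infty$, and by~\eqref{eq:Psi.ball} we have $\tfrac12 B\subset\Psi(B)\subset 2B$ for every ball. Consequently $\Psi(\lambda\Omega_r)$ has measure $\eqsim\lambda^n|B_r|$ and sits between scaled half-balls of comparable size.

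Estimate~\eqref{eq:lip.regx} is immediate from~\eqref{eq:lip.regy}: the identity $\nabla v(x)=(\nabla\Psi)(x)\nabla_y\tilde v(\Psi(x))$ combined with $|\nabla\Psi|\eqsim 1$ yields $|\nabla v(x)|\eqsim|\nabla_y\tilde v(\Psi(x))|$; taking suprema over $\tfrac{1}{32}\Omega_r$, pushing through $\Psi$ into a half-ball of the appropriate size, applying~\eqref{eq:lip.regy} (to a slightly rescaled $B_r$ if needed), and changing variables back in the resulting integral delivers the claimed bound.

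For~\eqref{eq:decayx}, introduce the intermediate function $W(x):=V(\setM_{B_r}\nabla_y\tilde v(\Psi(x)))=\mathcal{V}_{B_r}(\nabla_y\tilde v)(\Psi(x))$ and split
\begin{align*}
\dashint_{\lambda\Omega_r}|\mathcal{V}_{B_r}(\nabla v)-\mean{\mathcal{V}_{B_r}(\nabla v)}_{\lambda\Omega_r}|^2\,dx\lesssim\dashint_{\lambda\Omega_r}|\mathcal{V}_{B_r}(\nabla v)-W|^2\,dx+\dashint_{\lambda\Omega_r}|W-\mean{W}_{\lambda\Omega_r}|^2\,dx.
\end{align*}
For the error term, Lemma~\ref{lem:equivA} gives the pointwise identification
\begin{align*}
|\mathcal{V}_{B_r}(\nabla v)(x)-W(x)|^2\eqsim\phi_{|\setM_{B_r}\nabla_y\tilde v|}\bigl(|\setM_{B_r}((\nabla\Psi)(x)-\identity)\nabla_y\tilde v|\bigr).
\end{align*}
Since $|(\nabla\Psi)-\identity|\leq n\|\nabla\psi\|_\infty\leq\tfrac12$, the argument of $\phi_{|\cdot|}$ is at most a fixed fraction of the shift, so by~\eqref{eq:phi} the right-hand side is $\lesssim\|\nabla\psi\|_\infty^2\omega_{B_r}^p|\nabla_y\tilde v|^p\eqsim\|\nabla\psi\|_\infty^2\omega_{B_r}^p|\nabla v|^p$. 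Averaging over $\lambda\Omega_r$ and bounding crudely $\int_{\lambda\Omega_r}\leq\int_{\tfrac14\Omega_r}$ together with $|\lambda\Omega_r|\eqsim\lambda^n|\tfrac14\Omega_r|$ produces exactly the $\|\nabla\psi\|_\infty^2\lambda^{-n}$-term of~\eqref{eq:decayx}.

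For the oscillation term, change variables $y=\Psi(x)$ (Jacobian one) to rewrite it as $\dashint_{\Psi(\lambda\Omega_r)}|\mathcal{V}_{B_r}(\nabla_y\tilde v)-\mean{\mathcal{V}_{B_r}(\nabla_y\tilde v)}_{\Psi(\lambda\Omega_r)}|^2\,dy$. Replace $\Psi(\lambda\Omega_r)$ by the standard half-ball $2\lambda B_r\cap\setR^n_+$ of comparable measure (at only a universal-constant cost, by the usual comparison-of-means argument), apply~\eqref{eq:decayy} to gain the factor $\lambda^{2\alpha}$ times the oscillation of $\mathcal{V}_{B_r}(\nabla_y\tilde v)$ on $\tfrac18 B_r\cap\setR^n_+$, change variables back (enlarging to $\tfrac14\Omega_r$, again of comparable measure), and insert the same error bound a second time to pass from $\mathcal{V}_{B_r}(\nabla_y\tilde v)\circ\Psi$ back to $\mathcal{V}_{B_r}(\nabla v)$. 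This yields the $\lambda^{2\alpha}$-term of~\eqref{eq:decayx} plus an additional $\lambda^{2\alpha}\|\nabla\psi\|_\infty^2\dashint_{\tfrac14\Omega_r}|\nabla v|^p\omega_{B_r}^p\,dx$ which is dominated by the $\lambda^{-n}\|\nabla\psi\|_\infty^2$-term since $\lambda^{2\alpha}\leq 1\leq\lambda^{-n}$.

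The principal difficulty is the bookkeeping of the intermediate regions $\Psi(\lambda\Omega_r)$, $2\lambda B_r\cap\setR^n_+$, $\tfrac18 B_r\cap\setR^n_+$ and $\tfrac14\Omega_r$, which are all of comparable measure but not equal: one has to repeatedly justify swapping integration domains and means at only a universal-constant cost, and make minor adjustments so that the rescaled parameter ($2\lambda$, etc.) stays in the admissible range of~\eqref{eq:decayy}. The analytic core—Lemma~\ref{lem:equivA} together with the quadratic behaviour~\eqref{eq:phi} of $\phi_a$ below its shift—is straightforward once set up.
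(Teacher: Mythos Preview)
Your proposal is correct and follows essentially the same route as the paper: transfer both estimates from Proposition~\ref{prop:regy} via $y=\Psi(x)$, control the discrepancy $|\mathcal{V}_{B_r}(\nabla v)-\mathcal{V}_{B_r}(\nabla_y\tilde v\circ\Psi)|^2$ through Lemma~\ref{lem:equivA} and the quadratic behaviour of $\phi_a$ below its shift (the paper phrases the same bound via Lemma~\ref{lem:diff.A2}), and handle the domain bookkeeping with~\eqref{eq:Psi.ball}. The only cosmetic difference is that you split in the $x$-variable and then push the oscillation piece through $\Psi$, whereas the paper changes to $y$ first and splits there; the resulting terms and their treatment coincide.
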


\begin{proof}
To obtain \eqref{eq:decayx}, using \eqref{eq:Psi.ball} with the help of \eqref{eq:Omega}, there holds
\begin{align}\label{eq:decay9}
\begin{split}
\dashint_{\lambda\Omega_r}&|\mathcal{V}_{B_r}(\nabla v)-\mean{\mathcal{V}_{B_r}(\nabla v)}_{\lambda \Omega_r}|^2\,dx\\
&=\dashint_{\Psi(\lambda \Omega_r)}|\mathcal{V}_{{B_r}}((\nabla\Psi)\nabla\tilde{v})-\mean{\mathcal{V}_{{B_r}}((\nabla\Psi)\nabla\tilde{v})}_{\Psi(\lambda \Omega_r)}|^2\,dy\\
&\lesssim\dashint_{\Psi(4\lambda \Omega_r)}|\mathcal{V}_{{B_r}}((\nabla\Psi)\nabla\tilde{v})-\mean{\mathcal{V}_{{B_r}}(\nabla\tilde{v})}_{\Psi(4\lambda \Omega_r)}|^2\,dy\\
&\lesssim\dashint_{\Psi(4\lambda \Omega_r)}|\mathcal{V}_{{B_r}}((\nabla\Psi)\nabla\tilde{v})-\mathcal{V}_{{B_r}}(\nabla\tilde{v})|^2\,dy\\
&\quad+\dashint_{\Psi(4\lambda \Omega_r)}|\mathcal{V}_{{B_r}}(\nabla\tilde{v})-\mean{\mathcal{V}_{{B_r}}(\nabla\tilde{v})}_{\Psi(4\lambda \Omega_r)}|^2\,dy=:I_1+I_2.
\end{split}
\end{align}
For $I_1$, by Lemma \ref{lem:equivA} and Lemma \ref{lem:diff.A2}, we obtain
\begin{align}\label{eq:decay10}
\begin{split}
I_1&\lesssim\dashint_{\Psi(4\lambda \Omega_r)}|\mathcal{A}((\nabla\Psi)\nabla\tilde{v})-\mathcal{A}_{B_r}(\nabla\tilde{v})|\cdot|(\nabla\Psi)\nabla\tilde{v}-\nabla\tilde{v}|\,dy\\
&\lesssim \dashint_{\Psi(4\lambda \Omega_r)}\omega^p_{{B_r}}\|\nabla\psi\|_{\infty}|(\nabla\Psi)\nabla\tilde{v}|^{p-1}\cdot\|\nabla\psi\|_{\infty}|\nabla\tilde{v}|\,dy\\
&\lesssim \|\nabla\psi\|_{\infty}^2\dashint_{\Psi(4\lambda \Omega_r)}\omega^p_{{B_r}}|(\nabla\Psi)\nabla\tilde{v}|^{p}|(\nabla\Psi)^{-1}|\,dy\\
&\lesssim \|\nabla\psi\|_{\infty}^2\dashint_{4\lambda \Omega_r}|\nabla v|^{p}\omega^p_{{B_r}}\,dx\\
&\lesssim \|\nabla\psi\|_{\infty}^2\lambda^{-n}\dashint_{\frac{1}{4} \Omega_r}|\nabla v|^{p}\omega^p_{{B_r}}\,dx.
\end{split}
\end{align}
On the other hand, for $I_2$, we apply \eqref{eq:Psi.ball} from \eqref{eq:Omega}, and \eqref{eq:decayy} to have
\begin{align}\label{eq:decay11}
\begin{split}
I_2&\lesssim \dashint_{8\lambda {B_r}\cap\setR^n_+}|\mathcal{V}_{{B_r}}(\nabla \tilde{v})-\mean{\mathcal{V}_{B_r}(\nabla\tilde{v})}_{8\lambda {B_r}\cap\setR^n_+}|^2\,dy\\
&\lesssim \lambda^{2\alpha}\dashint_{\frac{1}{8}{B_r}\cap\setR^{n}_+}|\mathcal{V}_{{B_r}}(\nabla \tilde{v})-\mean{\mathcal{V}_{{B_r}}(\nabla \tilde{v})}_{\frac{1}{8}{B_r}\cap\setR^{n}_+}|^2\,dy\\
&\lesssim \lambda^{2\alpha}\dashint_{\Psi^{-1}(\frac{1}{8}{B_r}\cap\setR^n_+)}|\mathcal{V}_{{B_r}}((\nabla\Psi)^{-1}\nabla v)-\mean{\mathcal{V}_{{B_r}}(\nabla v)}_{\frac{1}{4}\Omega_r}|^2\,dx\\
&\lesssim \lambda^{2\alpha}\dashint_{\frac{1}{4}\Omega_r}|\mathcal{V}_{{B_r}}((\nabla\Psi)^{-1}\nabla v)-\mathcal{V}_{{B_r}}(\nabla v)|^2\,dx\\
&\quad+ \lambda^{2\alpha}\dashint_{\frac{1}{4}\Omega_r}|\mathcal{V}_{{B_r}}(\nabla v)-\mean{\mathcal{V}_{{B_r}}(\nabla v)}_{\frac{1}{4}\Omega_r}|^2\,dx\\
&=:\lambda^{2\alpha}(I_{2,1}+I_{2,2}).
\end{split}
\end{align}
To obtain \eqref{eq:decayx}, we only have to estimate $I_{2,1}$. By the similar argument as \eqref{eq:decay10}, we have
\begin{align}\label{eq:decay12}
\begin{split}
I_{2,1}&\lesssim\dashint_{\frac{1}{4}\Omega_r}|\mathcal{A}((\nabla\Psi)^{-1}\nabla v)-\mathcal{A}_{B_r}(\nabla v)|\cdot\|\nabla\psi\|_{\infty}|\nabla v|\,dx\\
&\lesssim \|\nabla\psi\|_{\infty}^2\dashint_{\frac{1}{4}\Omega_r}\omega^p_{{B_r}}|(\nabla\Psi)^{-1}\nabla v|^{p}|\nabla\Psi|\,dx\\
&\lesssim \|\nabla\psi\|_{\infty}^2\dashint_{\frac{1}{4}\Omega_r}|\nabla v|^{p}\omega^p_{{B_r}}\,dx.
\end{split}
\end{align}
Summing up all estimates \eqref{eq:decay9}--\eqref{eq:decay12}, we conclude \eqref{eq:decayx}.

To show \eqref{eq:lip.regx}, by using \eqref{eq:T} and \eqref{eq:Psi.ball}, there holds
\begin{align}\label{eq:decay13}
\begin{split}
\sup_{x\in\frac{1}{32}\Omega_r}|\nabla v(x)|^{p}\omega^p_{{B_r}}\leq \sup_{y\in\Psi(\frac{1}{32}\Omega_r)}|(\nabla\Psi)\nabla \tilde{v}(y)|^{p}\omega^p_{{B_r}}\lesssim \sup_{y\in\frac{1}{16}{B_r}\cap\setR^n_+}|\nabla \tilde{v}(y)|^{p}\omega^p_{{B_r}}
\end{split}
\end{align}
and by \eqref{eq:lip.regy}, we have
\begin{align}\label{eq:decay14}
\begin{split}
\sup_{y\in\frac{1}{16}{B_r}\cap\setR^n_+}|\nabla \tilde{v}(y)|^{p}\omega^p_{{B_r}}&\lesssim\dashint_{\frac{1}{8}{B_r}\cap\setR^n_+}|\nabla \tilde{v}|^p\omega^p_{{B_r}}\,dy\\
&\lesssim\dashint_{\Psi^{-1}(\frac{1}{8}{B_r}\cap\setR^n_+)}|(\nabla\Psi)^{-1}\nabla v|^p\omega^p_{{B_r}}\,dx\\
&\lesssim\dashint_{\frac{1}{4}\Omega_r}|\nabla v|^p\omega^p_{{B_r}}\,dx.
\end{split}
\end{align}
Summing up the above two inequalities, we have \eqref{eq:lip.regx}.
\end{proof}

With the help of the above estimates for $v$, we give the following decay estimate of $\mathcal{V}(\cdot,\nabla z)$. Recall that for $B_0=B_R(x_0)$ with $x_0\in\partial\Omega$, let $B_r=B_r(\tilde{x})$ with $\tilde{x}\in\overline{\Omega}$ and $4{B_r}\subset 2B_0$. Also, $z$ on $B_0\cap\Omega$ is such that $z:=u\eta^{p'}$ and we take the zero extension for $z$ on $\setR^n\setminus (B_0\cap\Omega)$, if necessary.

\begin{proposition}[Decay estimate at boundary]\label{prop:dec} Let $z$, $u$ and $F$ be as in \eqref{eq:plap4} and \eqref{eq:z}. There exist $\lambda=\lambda(n,p,\Lambda)\in(0,\frac{1}{80})$, $s=s(n,p,\Lambda)>1$ and $\kappa_5=\kappa_5(n,p,\Lambda)\in(0,1)$ such that the following holds: If $\lvert\log\setM|_{\setBMO(4{B_r})}\leq\kappa_5$, and \eqref{eq:Omega} hold, then for every $\epsilon\in(0,1)$ there holds
\begin{align}\label{eq:dec}
\begin{split}
\dashint_{\lambda {B_r}}&|\mathcal{V}(x,\nabla z)-\mean{\mathcal{V}(\cdot,\nabla z)}_{\lambda {B_r}}|^2\,dx\\
&\qquad\leq\dfrac{1}{4}\dashint_{{B_r}}|\mathcal{V}(x,\nabla z)-\mean{\mathcal{V}(\cdot,\nabla z)}_{{B_r}}|^2\,dx\\ &\qquad\quad+c\left(\lvert\log\setM|^2_{\setBMO({B_r})}+\|\nabla\psi\|^2_{\infty}+\epsilon\right)\left(\dashint_{{B_r}}|\mathcal{V}(x,\nabla z)|^{2s}\,dx\right)^\frac{1}{s}\\
&\qquad\quad+c\,C^*(\epsilon)\left(\dashint_{2{B_r}}\left(\dfrac{\chi_{2B_0\cap\Omega}|u|^p}{R^p}\omega^p\right)^s\,dx\right)^{\frac{1}{s}}\\
&\qquad\quad+c\,C^*(\epsilon)\left(\dashint_{2{B_r}}|\chi_{2B_0\cap\Omega}\mathcal{V}(x,F)|^{2s}\,dx\right)^{\frac{1}{s}}
\end{split}
\end{align}
with $c=c(n,p,\Lambda)$ and $C^*(\epsilon)$ defined in \eqref{eq:C}.
\end{proposition}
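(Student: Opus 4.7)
The plan is to chain the two comparison estimates (Propositions \ref{prop:comp} and \ref{prop:comp2}) with the decay estimate for $v$ (Proposition \ref{prop:regx}) through repeated triangle inequalities, and then choose $\lambda$ small enough to absorb the resulting decay factor into the $\frac{1}{4}$ on the right-hand side. Specifically, I would write
\begin{align*}
 \mathcal{V}(x,\nabla z) - \mean{\mathcal{V}_{B_r}(\nabla v)}_{\lambda\Omega_r}
 &= \bigl(\mathcal{V}(x,\nabla z) - \mathcal{V}_{B_r}(\nabla z)\bigr)
   + \bigl(\mathcal{V}_{B_r}(\nabla z) - \mathcal{V}_{B_r}(\nabla h)\bigr) \\
 &\quad + \bigl(\mathcal{V}_{B_r}(\nabla h) - \mathcal{V}_{B_r}(\nabla v)\bigr)
   + \bigl(\mathcal{V}_{B_r}(\nabla v) - \mean{\mathcal{V}_{B_r}(\nabla v)}_{\lambda\Omega_r}\bigr),
\end{align*}
and use that $\dashint_{\lambda B_r}|\mathcal{V}(\cdot,\nabla z) - \mean{\mathcal{V}(\cdot,\nabla z)}_{\lambda B_r}|^2\,dx \le 2\dashint_{\lambda B_r}|\mathcal{V}(\cdot,\nabla z) - Q|^2\,dx$ for any constant $Q$.

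For the first difference I would use \eqref{eq:diff.A} together with Lemma \ref{lem:equivA} and $|\mathcal{V}(x,\xi)|^2 \eqsim \omega(x)^p |\xi|^p$, then apply Lemma \ref{lem:BMOM} with the smallness hypothesis $\lvert \log \setM\rvert_{\setBMO(4B_r)} \le \kappa_5$ to extract the factor $\lvert\log\setM\rvert^2_{\setBMO(B_r)}$ times $(\dashint_{B_r}(|\nabla z|\omega)^{2s}\,dx)^{1/s}$; the exponent $2s$ appears after H\"older with the high-integrability factor coming from $(|\setM-\setM_{B_r}|/|\setM_{B_r}|)^{2s'}$. The second difference is controlled directly by Proposition \ref{prop:comp} with the same small parameter $\epsilon$, which produces the stated main error term plus the $C^*(\epsilon)$-weighted $u/R$ and $F$ contributions on $2\Omega_r$. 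The third difference is controlled by Proposition \ref{prop:comp2}, which yields $\|\nabla\psi\|^2_\infty \dashint_{\Omega_r}|\nabla h|^p\omega^p_{B_r}\,dx$; by the minimizing property of $h$ together with Lemma \ref{lem:HI1} this is further bounded by a constant multiple of $(\dashint_{B_r}|\mathcal{V}(\cdot,\nabla z)|^{2s}\,dx)^{1/s}$ plus the same $u/R$ and $F$ terms. For the fourth difference I apply Proposition \ref{prop:regx} to obtain the factor $\lambda^{2\alpha}$ times the oscillation of $\mathcal{V}_{B_r}(\nabla v)$ on $\frac{1}{4}\Omega_r$, plus the harmless $\|\nabla\psi\|^2_\infty \lambda^{-n}\dashint_{\frac{1}{4}\Omega_r}|\nabla v|^p \omega^p_{B_r}\,dx$ error (which again reduces to the $|\nabla z|$-term after reversing the comparison chain).

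To close the argument I would reverse the triangle inequality once more, this time on $\frac{1}{4}\Omega_r$, to estimate $\dashint_{\frac{1}{4}\Omega_r}|\mathcal{V}_{B_r}(\nabla v)-\mean{\mathcal{V}_{B_r}(\nabla v)}_{\frac{1}{4}\Omega_r}|^2\,dx$ by $\dashint_{B_r}|\mathcal{V}(\cdot,\nabla z)-\mean{\mathcal{V}(\cdot,\nabla z)}_{B_r}|^2\,dx$ plus the same three families of error terms. The passage between integrals over $\lambda B_r$ and $\lambda\Omega_r$ (respectively $B_r$ and $\Omega_r$) is costless because $\nabla z \equiv 0$ outside $\Omega$ and by the measure density bound \eqref{eq:m.den1}. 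Finally I choose $\lambda = \lambda(n,p,\Lambda) \in (0,\frac{1}{80})$ so small that the accumulated constant in front of the oscillation of $\mathcal{V}(\cdot,\nabla z)$ on $B_r$ is at most $\frac{1}{4}$; the residual $\lambda^{-n}\|\nabla\psi\|^2_\infty$ prefactor is then absorbed into the $\|\nabla\psi\|^2_\infty$ coefficient of the stated error since $\lambda$ depends only on $n,p,\Lambda$. Shrinking $\kappa_5$ further if necessary ensures that Lemma \ref{lem:BMOM}, Lemma \ref{lem:Mu.weight}, Corollary \ref{cor:hi} and Propositions \ref{prop:comp}--\ref{prop:comp2} apply simultaneously.

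The main obstacle is bookkeeping: one must carry out each triangle-inequality step so that the leading oscillation term that re-appears on the right is indeed multiplied by a constant small enough to be absorbed (after summing the three sources $\lvert\log\setM\rvert^2_{\setBMO}$, $\|\nabla\psi\|^2_\infty$, and $\epsilon$ into the prefactors), while the $C^*(\epsilon)$-dependent $u/R$ and $F$ terms remain as in the statement. The higher integrability of Corollary \ref{cor:hi} is what allows the transitions between $L^p$ and $L^{ps}$-averages of $|\nabla z|\omega$ needed when H\"older's inequality is used against the BMO-type factors, and Lemma \ref{lem:Mu.weight} controls the transitions between $\omega$ and $\omega_{B_r}$ under the small-$\setBMO$ hypothesis.
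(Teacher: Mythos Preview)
Your proposal is correct and follows essentially the same route as the paper: triangle-inequality splitting, the two comparison estimates (Propositions \ref{prop:comp} and \ref{prop:comp2}), the decay estimate for $v$ (Proposition \ref{prop:regx}), then choosing $\lambda$ so that $c\lambda^{2\alpha}\le\frac14$, with the measure density \eqref{eq:m.den1} handling the passage between $\lambda B_r$ and $\lambda\Omega_r$.

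There is one organizational difference worth noting. The paper first passes through $\mathcal{V}(x,\nabla v)$ as an intermediate, which forces it to control the extra piece $\mathcal{V}(x,\nabla v)-\mathcal{V}_{B_r}(\nabla v)$; since no $L^{ps}$ bound on $\nabla v$ is available a priori, the paper invokes the Lipschitz bound \eqref{eq:lip.regx} to pull out $\sup_{\frac{1}{32}\Omega_r}|\nabla v|^p\omega^p_{B_r}$ and then applies the minimizing properties (this is the paper's $I_{3,3}$). Your four-term splitting stays entirely with the constant weight $\setM_{B_r}$ and never produces that cross term, so you avoid the need for \eqref{eq:lip.regx} altogether; the only appearance of $\nabla v$ is in the error term $\lambda^{-n}\|\nabla\psi\|^2_\infty\dashint_{\frac14\Omega_r}|\nabla v|^p\omega^p_{B_r}$ from \eqref{eq:decayx}, which the minimizing properties of $v$ and $h$ bound directly by $\dashint_{\Omega_r}|\nabla z|^p\omega^p_{B_r}$ as you indicate. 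This is a minor but genuine streamlining of the paper's argument.
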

\begin{proof}
We first assume $\lvert\log\setM|_{\setBMO(4{B_r})}\leq\kappa_4$ with $\kappa _4$ from Proposition \ref{prop:comp2}. Since $z=|\nabla z|=0$ in $\lambda {B_r}\setminus\Omega$ and $|\lambda\Omega_r|\eqsim |\lambda {B_r}|$ hold from \eqref{eq:m.den1} because of \eqref{eq:Omega} and $\tilde{x}\in\overline{\Omega}$, we have
\begin{align}\label{eq:dec1}
\begin{split}
I_1&:=\dashint_{\lambda {B_r}}|\mathcal{V}(x,\nabla z)-\mean{\mathcal{V}(\cdot,\nabla z)}_{\lambda {B_r}}|^2\,dx\\
&\lesssim\dashint_{\lambda {B_r}}|\mathcal{V}(x,\nabla z)-\mean{\mathcal{V}(\cdot,\nabla v)}_{\lambda \Omega_r}|^2\,dx\\
&\lesssim\dashint_{\lambda \Omega_r}|\mathcal{V}(x,\nabla z)-\mean{\mathcal{V}(\cdot,\nabla v)}_{\lambda \Omega_r}|^2\,dx\\
&\lesssim\dashint_{\lambda \Omega_r}|\mathcal{V}(x,\nabla v)-\mean{\mathcal{V}(\cdot,\nabla v)}_{\lambda \Omega_r}|^2\,dx+\dashint_{\lambda \Omega_r}|\mathcal{V}(x,\nabla z)-\mathcal{V}(x,\nabla v)|^2\,dx\\
&=:I_2+I_3.
\end{split}
\end{align}

We start to estimate $I_3$. There holds
\begin{align}\label{eq:dec2}
\begin{split}
I_3&\lesssim\lambda^{-n}\dashint_{\frac{1}{2}\Omega_r}|\mathcal{V}(x,\nabla z)-\mathcal{V}(x,\nabla v)|^2\,dx\\
&\lesssim\lambda^{-n}\dashint_{\frac{1}{2}\Omega_r}|\mathcal{V}_{B_r}(\nabla z)-\mathcal{V}_{B_r}(\nabla v)|^2\,dx\\
&\quad +\lambda^{-n}\dashint_{\frac{1}{2}\Omega_r}|\mathcal{V}(x,\nabla z)-\mathcal{V}_{B_r}(\nabla z)|^2\,dx+\lambda^{-n}\dashint_{\frac{1}{32}\Omega_r}|\mathcal{V}(x,\nabla v)-\mathcal{V}_{B_r}(\nabla v)|^2\,dx\\
&=:I_{3,1}+I_{3,2}+I_{3,3}.
\end{split}
\end{align}

For $I_{3,2}$ and $I_{3,3}$, since $\tilde{x}\in\overline{\Omega}$ and \eqref{eq:Omega} holds, $|\Omega_r|\eqsim|{B_r}|$ also holds. Then we can apply the similar argument of the proof as in Proposition 18 in \cite{BDGN}. By \eqref{eq:M2}, \eqref{eq:MB2}, \eqref{eq:MB3}, Lemma \ref{lem:equivA}, \eqref{eq:equiv.phi}, and H\"{o}lder's inequality with exponents $(2s',s,2s')$, we have
\begin{align}\label{eq:dec4}
\begin{split}
I_{3,2}&\lesssim \lambda^{-n}\left(\dashint_{\Omega_r}\left(\dfrac{|\setM_{{B_r}}-\setM|}{|\setM_{{B_r}}|}\right)^{4s'}\,dx\right)^{\frac{1}{2s'}}\left(\dashint_{\Omega_r}(|\nabla z|^p\omega^p)^s\,dx\right)^{\frac{1}{s}}\\
&\quad\quad\times\left(1+\left(\dashint_{\Omega_r}\left(\dfrac{\omega^p_{{B_r}}}{\omega^p}\right)^{2s'}\,dx\right)^{\frac{1}{2s'}}\right).
\end{split}
\end{align}
Together with $z=|\nabla z|=0$ in ${B_r}\setminus\Omega$ and $|\Omega_r|\eqsim|{B_r}|$ due to \eqref{eq:Omega}, we obtain
\begin{align}\label{eq:dec6}
\begin{split}
I_{3,2}\lesssim \lambda^{-n}\left(\dashint_{\Omega_r}(|\nabla z|^p\omega^p)^s\,dx\right)^{\frac{1}{s}}.
\end{split}
\end{align}
Note that by $4{B_r}\subset2B_0$, $\chi_{2B_0\cap\Omega}$ can be inserted in the integrand of the above estimate. On the other hand, similar to $I_{3,2}$, we continue to estimate $I_{3,3}$ as follows:
\begin{align}\label{eq:dec7}
\begin{split}
I_{3,3}&=\lambda^{-n}\dashint_{\frac{1}{32}\Omega_r}|V(\setM\nabla v)-V(\setM_{{B_r}}\nabla v)|^2\,dx\\
&\lesssim\lambda^{-n}\dashint_{\frac{1}{32}\Omega_r}\left(\dfrac{|\setM-\setM_{B_r}|}{|\setM_{{B_r}}|}\right)^2\left(|\nabla v|^p\omega^p+|\nabla v|^p\omega^p_{{B_r}}\right)\,dx\\
&\lesssim\lambda^{-n}\left(\sup_{\frac{1}{32}\Omega_r}|\nabla v|^p\omega^p_{{B_r}}\right)\left(\dashint_{\frac{1}{2}\Omega_r}\left(\dfrac{|\setM-\setM_{{B_r}}|}{|\setM_{{B_r}}|}\right)^2\left(\dfrac{\omega^p}{\omega^p_{{B_r}}}+1\right)\,dx\right)=\lambda^{-n}I_{3,3,1}I_{3,3,2}.
\end{split}
\end{align}
Here, \eqref{eq:lip.regx} in Proposition \ref{prop:regx}, \eqref{eq:T} and \eqref{eq:MB2} imply
\begin{align}\label{eq:dec8}
\begin{split}
I_{3,3,1}&:=\sup_{\frac{1}{32}\Omega_r}|\nabla v|^p\omega^p_{{B_r}}\\
&\lesssim\dashint_{\frac{1}{4}\Omega_r}|\nabla v|^p\omega^p_{{B_r}}\,dx\lesssim\dashint_{\frac{1}{4}\Omega_r}|\setM_{B_r}T(x)\nabla v|^p|T^{-1}(x)|^p\,dx\lesssim \dashint_{\frac{1}{2}\Omega_r}|\setM_{B_r}T(x)\nabla v|^p\,dx
\end{split}
\end{align}
and the minimizing property of $v$ and $h$, together with \eqref{eq:T} and \eqref{eq:MB2} give us that
\begin{align}\label{eq:dec9}
I_{3,3,1}\lesssim \dashint_{\frac{1}{2}\Omega_r}|\setM_{B_r}T(x)\nabla h|^p\,dx\lesssim\dashint_{\Omega_r}|\setM_{B_r}\nabla h|^p\,dx\lesssim \dashint_{\Omega_r}|\nabla z|^p\omega_{B_r}^p\,dx.
\end{align}
Then we use H\"{o}lder's inequality, $|\Omega_r|\eqsim|{B_r}|$ from \eqref{eq:Omega}, Lemma \ref{lem:Mu.weight} and $z=|\nabla z|=0$ in ${B_r}\setminus\Omega$ to obtain
\begin{align}\label{eq:dec10}
\begin{split}
I_{3,3,1}\lesssim\left(\dashint_{\Omega_{r}}(|\nabla z|^p\omega^p)^s\,dx\right)^{\frac{1}{s}}\left(\dashint_{\Omega_{r}}\left(\dfrac{\omega^p_{B_r}}{\omega^p}\right)^{\frac{s}{s-1}}\,dx\right)^{1-\frac{1}{s}}\lesssim\left(\dashint_{{B_r}}(|\nabla z|^p\omega^p)^s\,dx\right)^{\frac{1}{s}},
\end{split}
\end{align}
provided $\lvert\log\setM|_{\setBMO(4{B_r})}\leq\kappa_4$ holds. 
On the other hand, by H\"{o}lder's inequality, Lemma \ref{lem:BMOM} and Lemma \ref{lem:Mu.weight} we obtain
\begin{align}\label{eq:dec12}
\begin{split}
I_{3,3,2}&\lesssim\left(\dashint_{\frac{1}{2}\Omega_r}\left(\dfrac{|\setM-\setM_{{B_r}}|}{|\setM_{{B_r}}|}\right)^4\,dx\right)^{\frac{1}{2}}\left(\left(\dashint_{\frac{1}{2}\Omega_r}\left(\dfrac{\omega^p}{\omega^p_{{B_r}}}\right)^2\,dx\right)^{\frac{1}{2}}+1\right)\lesssim\lvert\log\setM|^2_{\setBMO({B_r})}.
\end{split}
\end{align}
Summing up, there holds
\begin{align}\label{eq:dec13}
\begin{split}
I_{3,3}&\lesssim\lambda^{-n}\lvert\log\setM|^2_{\setBMO({B_r})}\left(\dashint_{{B_r}}|\mathcal{V}(x,\nabla z)|^{2s}\,dx\right)^{\frac{1}{s}}.
\end{split}
\end{align}
For $I_{3,1}$, we first apply Proposition \ref{prop:comp} and Proposition \ref{prop:comp2}, use $z=|\nabla z|=0$ in $2{B_r}\setminus\Omega$ and $|2\Omega_r|\eqsim|2{B_r}|$ with the help of \eqref{eq:Omega}, and argue similarly to \eqref{eq:dec9}--\eqref{eq:dec10} for the integral of $|\nabla h|^p\omega^p_{{B_r}}$ term. The resulting estimate is as follows: 
\begin{align*}%\label{eq:dec3}
\begin{split}
I_{3,1}&\lesssim \lambda^{-n}\dashint_{\frac{1}{2}\Omega_r}|\mathcal{V}_{B_r}(\nabla z)-\mathcal{V}_{B_r}(\nabla h)|^2\,dx+\lambda^{-n}\dashint_{\frac{1}{2}\Omega_r}|\mathcal{V}_{B_r}(\nabla h)-\mathcal{V}_{B_r}(\nabla v)|^2\,dx\\
&\lesssim \lambda^{-n}(\lvert\log\setM|^2_{\setBMO({B_r})}+\epsilon)\dashint_{\Omega_r}(|\nabla z|^p\omega^p)\,dx+\lambda^{-n}\|\nabla\psi\|^2_{\infty}\dashint_{\Omega_r}(|\nabla h|^p\omega_{B_r}^p)\,dx\\
&\quad+\lambda^{-n}C^*(\epsilon)\left(\dashint_{2\Omega_{r}}\left(\dfrac{|u|^p}{R^p}\omega^p\right)^s\,dx\right)^{\frac{1}{s}}+\lambda^{-n}C^*(\epsilon)\left(\dashint_{2\Omega_{r}}|\mathcal{V}(x,F)|^{2s}\,dx\right)^{\frac{1}{s}}\\
&\lesssim \lambda^{-n}(\lvert\log\setM|^2_{\setBMO({B_r})}+\|\nabla\psi\|^2_{\infty}+\epsilon)\left(\dashint_{{B_r}}|\mathcal{V}(x,\nabla z)|^{2s}\,dx\right)^{\frac{1}{s}}\\
&\quad+\lambda^{-n}C^*(\epsilon)\left(\dashint_{2{B_r}}\left(\dfrac{\chi_{2B_0\cap\Omega}|u|^p}{R^p}\omega^p\right)^s\,dx\right)^{\frac{1}{s}}+\lambda^{-n}C^*(\epsilon)\left(\dashint_{2{B_r}}|\chi_{2B_0\cap\Omega}\mathcal{V}(x,F)|^{2s}\,dx\right)^{\frac{1}{s}}.
\end{split}
\end{align*}
Consequently, we have
\begin{align}\label{eq:dec15}
\begin{split}
I_{3}&\lesssim\lambda^{-n}\left(\lvert\log\setM|^2_{\setBMO(4{B_r})}+\|\nabla\psi\|^2_{\infty}+\epsilon\right)\left(\dashint_{{B_r}}|\mathcal{V}(x,\nabla z)|^{2s}\,dx\right)^{\frac{1}{s}}\\
&\quad+\lambda^{-n}C^*(\epsilon)\left(\dashint_{2{B_r}}\left(\dfrac{\chi_{2B_0\cap\Omega}|u|^p}{R^p}\omega^p\right)^s\,dx\right)^{\frac{1}{s}}\\
&\quad+\lambda^{-n}C^*(\epsilon)\left(\dashint_{2{B_r}}|\chi_{2B_0\cap\Omega}\mathcal{V}(x,F)|^{2s}\,dx\right)^{\frac{1}{s}}.
\end{split}
\end{align}

For $I_2$, we have
\begin{align}\label{eq:dec16}
\begin{split}
I_2&\lesssim\dashint_{\lambda \Omega_r}|\mathcal{V}_{{B_r}}(\nabla v)-\mean{\mathcal{V}_{{B_r}}(\nabla v)}_{\lambda \Omega_r}|^2\,dx+\dashint_{\lambda \Omega_r}|\mathcal{V}(x,\nabla v)-\mathcal{V}_{{B_r}}(\nabla v)|^2\,dx\\
&\lesssim\dashint_{\lambda \Omega_r}|\mathcal{V}_{{B_r}}(\nabla v)-\mean{\mathcal{V}_{{B_r}}(\nabla v)}_{\lambda \Omega_r}|^2\,dx+\lambda^{-n}\dashint_{\frac{1}{2}\Omega_r}|\mathcal{V}(x,\nabla v)-\mathcal{V}_{{B_r}}(\nabla v)|^2\,dx\\
&=:I_{2,1}+I_{2,2}.
\end{split}
\end{align}
With the help of \eqref{eq:decayx} in Proposition \ref{prop:regx}, it follows that
\begin{align}\label{eq:dec17}
I_{2,1}\lesssim\lambda^{2\alpha}\dashint_{\frac{1}{4}\Omega_r}|\mathcal{V}_{{B_r}}(\nabla v)-\mean{\mathcal{V}_{{B_r}}(\nabla v)}_{\frac{1}{4}\Omega_r}|^2\,dx+\|\nabla\psi\|^2_{\infty}\lambda^{-n}\dashint_{\frac{1}{4}\Omega_r}|\nabla v|^p\omega^p_{{B_r}}\,dx.
\end{align}
Triangle inequalities yield
\begin{align}\label{eq:dec18}
\begin{split}
I_{2,1}&\lesssim\lambda^{2\alpha}\dashint_{\Omega_r}|\mathcal{V}(x,\nabla z)-\mean{\mathcal{V}(\cdot,\nabla z)}_{{B_r}}|^2\,dx+\lambda^{2\alpha}\dashint_{\frac{1}{2}\Omega_r}|\mathcal{V}_{{B_r}}(\nabla z)-\mathcal{V}_{{B_r}}(\nabla v)|^2\,dx\\
&\quad+\lambda^{2\alpha}\dashint_{\frac{1}{2}\Omega_r}|\mathcal{V}(x,\nabla z)-\mathcal{V}_{{B_r}}(\nabla z)|^2\,dx+\lambda^{2\alpha}\dashint_{\frac{1}{2}\Omega_r}|\mathcal{V}(x,\nabla v)-\mathcal{V}_{{B_r}}(\nabla v)|^2\,dx\\
&\quad+\, \|\nabla\psi\|^2_{\infty}\lambda^{-n}\dashint_{\frac{1}{4}\Omega_r}|\nabla v|^p\omega^p_{{B_r}}\,dx\\
&=:I_{2,1,0}+I_{2,1,1}+I_{2,1,2}+I_{2,1,3}+I_{2,1,4}.
\end{split}
\end{align}
To estimate $I_{2,1,0}$, by $z=|\nabla z|=0$ in ${B_r}\setminus\Omega$ and $|\Omega_r|\eqsim|{B_r}|$ due to \eqref{eq:Omega}, we have
\begin{align}\label{eq:dec19}
I_{2,1,0}\lesssim\lambda^{2\alpha}\dashint_{{B_r}}|\mathcal{V}(x,\nabla z)-\mean{\mathcal{V}(x,\nabla z)}_{{B_r}}|^2\,dx.
\end{align}
Besides, using the similar argument for $I_{3,1},I_{3,2}$ and $I_{3,3}$, we estimate $I_{2,1,1}, I_{2,1,2}$ and $I_{2,1,3}$, respectively, with replacing the factor $\lambda^{-n}$ with $\lambda^{2\alpha}$. For $I_{2,1,4}$, by the same argument as in \eqref{eq:dec8}--\eqref{eq:dec10}, we have 
\begin{align}\label{eq:dec20}
\begin{split}
I_{2,1,4}&\lesssim \|\nabla\psi\|^2_{\infty}\lambda^{-n}\left( \dashint_{{B_r}}|\mathcal{V}(x,\nabla z)|^{2s}\,dx\right)^{\frac{1}{s}}.
\end{split}
\end{align}
On the other hand, for $I_{2,2}$ we apply the same estimate for $I_{3,3}$. Finally, we have
\begin{align}\label{eq:dec21}
\begin{split}
I_1&\leq I_2+I_3\\
&\leq c\lambda^{2\alpha}\dashint_{{B_r}}|\mathcal{V}(x,\nabla z)-\mean{\mathcal{V}(\cdot,\nabla z)}_{{B_r}}|^2\,dx\\
&\quad+c\lambda^{-n}\left(\lvert\log\setM|^2_{\setBMO(4{B_r})}+\|\nabla\psi\|^2_{\infty}+\epsilon\right)\left(\dashint_{{B_r}}|\mathcal{V}(x,\nabla z)|^{2s}\,dx\right)^{\frac{1}{s}}\\
&\quad+c\lambda^{-n}C^*(\epsilon)\left(\dashint_{2{B_r}}\left(\dfrac{\chi_{2B_0\cap\Omega}|u|^p}{R^p}\omega^p\right)^s\,dx\right)^{\frac{1}{s}}\\
&\quad+c\lambda^{-n}C^*(\epsilon)\left(\dashint_{2{B_r}}|\chi_{2B_0\cap\Omega}\mathcal{V}(x,F)|^{2s}\,dx\right)^{\frac{1}{s}}
\end{split}
\end{align}
for some $c=c(n,p,\Lambda)$. We select a small $\lambda=\lambda(n,p,\Lambda)\in(0,\frac{1}{80})$ such that $c\lambda^{2\alpha}\leq\frac{1}{4}$ holds, so that we get \eqref{eq:dec}.
\end{proof}

We define the Hardy-Littlewood maximal function and the sharp maximal function for $f\in L^{1}_{\loc}$ and $\rho\in[1,\infty)$ by
\begin{align}\label{eq:def.max}
\mathcal{M}_{\rho}f(x):=\sup_{r>0}\left(\dashint_{B_r(x)}|f|^{\rho}\,dy\right)^{\frac{1}{\rho}},\quad\mathcal{M}^{\sharp}_{\rho}f(x):=\sup_{r>0}\left(\dashint_{B_r(x)}|f-\mean{f}_{B_r(x)}|^{\rho}\,dy\right)^{\frac{1}{\rho}}.
\end{align}
Now, we employ Proposition \ref{prop:dec} to show the pointwise sharp maximal function estimate, which is more adaptable form to our gradient estimates. Recall that for $B_0=B_R(x_0)$ with $x_0\in\partial\Omega$.

\begin{proposition}\label{prop:sharpmax}
Let $z$, $u$ and $F$ be as in \eqref{eq:plap4} and \eqref{eq:z}. There exists $s=s(n,p,\Lambda)>1$ and $\kappa_5=\kappa_5(n,p,\Lambda)$ such that the following holds: If $\lvert\log\setM|_{\setBMO(4B_0)}\leq\kappa_5$ and \eqref{eq:Omega} hold, then for a.e. $x\in\setR^n$ and any $\epsilon\in(0,1]$, there holds
\begin{align}\label{eq:sharpmax}
\begin{split}
\mathcal{M}^{\sharp}_2\left(\mathcal{V}(\cdot,\nabla z)\right)(x)&\leq c\left(\lvert\log\setM|_{\setBMO(4B_0)}+\|\nabla\psi\|_{\infty}+\epsilon\right)\mathcal{M}_{2s}\left(\mathcal{V}(\cdot,\nabla z)\right)(x)\\
&\quad+c\,C^*(\epsilon^2)R^{-\frac{p}{2}}\left(\mathcal{M}_{s}\left(\chi_{4B_0\cap\Omega}|u|^p\omega^p\right)(x)\right)^{\frac{1}{2}}\\
&\quad+c\,C^*(\epsilon^2)\mathcal{M}_{2s}\left(\chi_{4B_0\cap\Omega}\mathcal{V}(\cdot,F)\right)(x)\\
&\quad+c\,\dfrac{R^n}{(R+|x-x_0|)^n}\left(\dashint_{B_0}|\mathcal{V}(y,\nabla z)-\mean{\mathcal{V}(\cdot,\nabla z)}_{B_0}|^2\,dy\right)^{\frac{1}{2}}
\end{split}
\end{align}
for $c=c(n,p,\Lambda)>0$.
\end{proposition}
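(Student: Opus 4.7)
For each ball $B_\rho(x)$ I need to control the local oscillation
\[
J(\rho) := \Biggl(\dashint_{B_\rho(x)} |\mathcal{V}(\cdot,\nabla z) - \mean{\mathcal{V}(\cdot,\nabla z)}_{B_\rho(x)}|^2 \, dy\Biggr)^{1/2}
\]
by the right-hand side of~\eqref{eq:sharpmax} and then take the supremum over $\rho$. The key structural observation is that since $z = u\eta^{p'}$ is supported in $\overline{B_0 \cap \Omega}$, so is $\mathcal{V}(\cdot,\nabla z)$; in particular $J(\rho) = 0$ whenever $B_\rho(x) \cap \overline{B_0 \cap \Omega} = \emptyset$. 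Let $\tilde{x}$ be the point of $\overline{\Omega}$ closest to $x$. The argument then splits into a "small-ball" regime driven by iteration of Proposition~\ref{prop:dec} and a "large-ball" regime handled by the support of $\mathcal{V}(\cdot,\nabla z)$.

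\textbf{Small-ball regime (iteration of Proposition~\ref{prop:dec}).} Whenever $\rho$ is small enough that $r := 3\rho/\lambda$ satisfies $B_\rho(x) \subset \lambda B_r(\tilde{x})$ and $4B_r(\tilde{x}) \subset 2B_0$, I would apply Proposition~\ref{prop:dec} with the ball $B_r(\tilde{x})$. Since $B_\rho(x) \subset \lambda B_r(\tilde{x})$ and the two balls have comparable measure, this yields
\[
J(\rho)^2 \leq \tfrac{1}{4}\, c_0\, \widetilde{J}(\rho/\lambda)^2 + \bigl(\text{RHS terms of~\eqref{eq:dec} at radius $\rho/\lambda$}\bigr),
\]
where $\widetilde{J}(\rho/\lambda)$ is the oscillation of $\mathcal{V}(\cdot,\nabla z)$ on $B_{3\rho/\lambda}(\tilde{x})$, which in turn is comparable (via $|\tilde{x}-x|\leq 2\rho$) to $J$ evaluated at a constant multiple of $\rho/\lambda$ centered at $x$. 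The constant $\lambda$ from Proposition~\ref{prop:dec} was chosen precisely so that $c_0/4 < 1$, so iteration contracts the self-improving term. I iterate this relation, growing the radius by $1/\lambda$ at each step and multiplying by a fixed factor $<1$, until the working ball exceeds a threshold $\rho_0 \eqsim (2R - |\tilde{x}-x_0|)$ at which the ball $4B_r(\tilde{x})$ no longer fits inside $2B_0$. At each iteration step, the lower-order averages of $|\mathcal{V}(\cdot,\nabla z)|^{2s}$, $(\chi_{2B_0\cap\Omega}|u|^p\omega^p/R^p)^s$, and $|\chi_{2B_0\cap\Omega}\mathcal{V}(\cdot,F)|^{2s}$ over the balls $B_r(\tilde{x}), 2B_r(\tilde{x})$ are dominated, via $|\tilde{x}-x|\leq 2\rho$, by the maximal functions $\mathcal{M}_{2s}$ and $\mathcal{M}_s$ evaluated at $x$. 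The geometric factor $\sum_j(c_0/4)^j$ sums to a constant and produces the first three terms of~\eqref{eq:sharpmax} with the correct dependence on $\lvert\log\setM|_{\setBMO}$, $\|\nabla\psi\|_\infty$, $\epsilon$ and $C^*(\epsilon)$ (after replacing $\epsilon$ by $\epsilon^2$ to match the form stated there).

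\textbf{Large-ball regime (support of $\mathcal{V}(\cdot,\nabla z)$).} After finitely many iterations, I arrive at a residual first term on a ball centered at $\tilde{x}$ of radius $\gtrsim \rho_0$, and I must also handle directly all $\rho$ exceeding the threshold. In both cases, the ball is at least comparable to $R + |x-x_0|$ while $\mathcal{V}(\cdot,\nabla z)$ is supported in $\overline{B_0 \cap \Omega}$. Choosing the comparison constant $c := \mean{\mathcal{V}(\cdot,\nabla z)}_{B_0}$ and splitting the integration domain into $B_\rho(x)\cap B_0$ and $B_\rho(x)\setminus B_0$, I get
\[
J(\rho)^2 \leq \dashint_{B_\rho(x)} |\mathcal{V}(\cdot,\nabla z) - c|^2 \, dy \leq \frac{|B_0|}{|B_\rho(x)|}\dashint_{B_0}|\mathcal{V}(\cdot,\nabla z)-c|^2\, dy + |c|^2,
\]
and the prefactor $|B_0|/|B_\rho(x)|$ is of order $R^n/(R+|x-x_0|)^n$ in this regime. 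The residual $|c|^2 = |\mean{\mathcal{V}(\cdot,\nabla z)}_{B_0}|^2$ is controlled via Jensen's inequality by $\mathcal{M}_{2s}(\mathcal{V}(\cdot,\nabla z))(x)^2$ on the smallest ball centered at $x$ containing $B_0$, multiplied by the same $R^n/(R+|x-x_0|)^n$ decay, and is thereby absorbed into the first term of~\eqref{eq:sharpmax}.

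\textbf{Main obstacle.} The subtle point is not any single estimate but the handoff between the two regimes: I must choose the threshold $\rho_0$ to depend on $|x-x_0|$ in a way compatible with the $R^n/(R+|x-x_0|)^n$ factor in~\eqref{eq:sharpmax}, verify that the ball at which iteration stops indeed satisfies $\rho \gtrsim R+|x-x_0|$ so the support-based estimate kicks in with the right rate, and track the constant term $|\mean{\mathcal{V}(\cdot,\nabla z)}_{B_0}|^2$ arising in the large-ball case to make sure it is absorbed either into the maximal-function term or into the fourth term with the correct power of the geometric factor.
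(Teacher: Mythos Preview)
Your small-ball iteration does not contract. When you pass from the oscillation on $B_\rho(x)$ to the oscillation on $\lambda B_r(\tilde x)=B_{3\rho}(\tilde x)$ you pick up a volume ratio $c_0\ge 3^n$, and a further comparable factor when you re-center from $\tilde x$ back to $x$ to set up the next step. Thus the factor in front of the self-similar term is at least $3^n/4$, which exceeds $1$ already for $n=2$. Your claim that ``$\lambda$ from Proposition~\ref{prop:dec} was chosen precisely so that $c_0/4<1$'' is not correct: $\lambda$ was fixed in that proof only to make the \emph{internal} decay constant satisfy $c\lambda^{2\alpha}\le\tfrac14$; your $c_0$ is produced afterwards by your own re-centering and is not controlled by that choice.

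The paper sidesteps this by not iterating at all. When $x\in\overline\Omega$ and $\tfrac{4}{\lambda}B_r(x)\subset 4B_0$, one applies Proposition~\ref{prop:dec} once with the ball $\lambda^{-1}B_r(x)$ centered \emph{at $x$ itself}, so that the left-hand side of \eqref{eq:dec} is exactly the local oscillation $\dashint_{B_r(x)}|\mathcal V(\cdot,\nabla z)-\langle\cdot\rangle|^2$ and the self-similar term on the right is $\tfrac14\dashint_{\lambda^{-1}B_r(x)}|\mathcal V(\cdot,\nabla z)-\langle\cdot\rangle|^2\le\tfrac14\bigl(\mathcal M_2^\sharp(\mathcal V(\cdot,\nabla z))(x)\bigr)^2$ with no extra constant. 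Taking the supremum over $r$ then gives $I\le\tfrac14 I+(\text{maximal and tail terms})$, which is absorbed in one step. The case $x\notin\overline\Omega$ is handled by a single shift of the center into $\overline\Omega$, and the remaining radii (those too large for \eqref{eq:dec} to apply but still meeting $B_0\cap\Omega$) are treated directly via the support of $z$.

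There is also a secondary problem in your large-ball regime: the leftover term $|\langle\mathcal V(\cdot,\nabla z)\rangle_{B_0}|$ cannot be absorbed into the first line of \eqref{eq:sharpmax}, because that line carries the small prefactor $|\log\mathbb M|_{\setBMO}+\|\nabla\psi\|_\infty+\epsilon$, whereas your term comes with a coefficient of order one. This is a separate issue from the iteration, but it also needs to be addressed.
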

\begin{proof}
Let $\kappa_5$ and $s$ be as in Proposition \ref{prop:dec}. Since $\mathcal{V}(\cdot,\nabla z)\in L^2(\setR^n)$, $\mathcal{V}(\cdot,F)\in L^2(4B_0\cap\Omega)$ and $|u|^p\omega^p\in L^s(4B_0\cap\Omega)$ by Proposition \ref{prop:SP}, all terms in \eqref{eq:sharpmax} are finite for a.e. $x$. Choose $x\in\setR^n$ and denote
\begin{align}\label{eq:sharpmax1}
I:=\mathcal{M}^{\sharp}_2(\mathcal{V}(\cdot,\nabla z))(x)=\sup_{r>0}\left(\dashint_{B_r(x)}|\mathcal{V}(y,\nabla z)-\mean{\mathcal{V}(\cdot,\nabla z)}_{B_r(x)}|^2\,dy\right)^{\frac{1}{2}}.
\end{align}
We divide the case for $r\in(0,\infty)$ as follows:

\begin{enumerate}
\item[(1)] $J_1:=\{r>0:B_r(x)\cap B_0\cap\Omega=\emptyset\}$
\item[(2)] $J_2:=\{r>0:\frac{4}{\lambda}B_r(x)\subset 4B_0\,\,\text{and}\,\,x\in\overline{\Omega}\}$
\item[(3)] $J_3:=\{r>0:\frac{4}{\lambda}B_r(x)\subset 4B_0\,\,\text{and}\,\,x\not\in\overline{\Omega}\}$
\item[(4)] $J_4:=\{r>0:B_r(x)\cap B_0\cap\Omega\neq\emptyset\,\,\text{and}\,\,\frac{4}{\lambda}B_r(x)\not\subset 4B_0 \}$.
\end{enumerate}
For $k=1,2,3,4$ let us denote
\begin{align}\label{eq:sharpmax2}
I_k:=\sup_{r\in J_k}\dashint_{B_r(x)}|\mathcal{V}(y,\nabla z)-\mean{\mathcal{V}(\cdot,\nabla z)}_{B_r(x)}|\,dy.
\end{align}

We immediately find $I_1=0$ since $z=0$ in $\setR^n\setminus (B_0\cap\Omega)$. For $I_2$, we apply Proposition \ref{prop:dec} with ${B_r}=\lambda^{-1}B_r(x)$ and $\epsilon^2$ instead of $\epsilon$ to have
\begin{align}\label{eq:sharpmax3}
\begin{split}
I_2&\leq\frac{1}{4}I+c\left(\lvert\log\setM|_{\setBMO(4B_0)}+\|\nabla\psi\|_{\infty}+\epsilon\right)\mathcal{M}_{2s}(\mathcal{V}(\cdot,\nabla z))(x)\\
&\quad+c\,C^*(\epsilon^2)R^{-\frac{p}{2}}\left(\mathcal{M}_{s}\left(\chi_{4B_0}|u|^p\omega^p\right)(x)\right)^{\frac{1}{2}}+c\,C^*(\epsilon^2)\mathcal{M}_{2s}(\chi_{4B_0}\mathcal{V}(\cdot,F))(x).
\end{split}
\end{align}
For $I_3$, when $r\in J_1$, then $I_3\equiv 0$. If $r\in J_3\setminus J_1$, then for $x=(x_1,\dots,x_n)$, denote $\tilde{x}=(x_1,\dots,x_n+r)$ and consider $2B_{r}(\tilde{x})(\supset B_r(x))$. Then since $\tilde{x}\in\overline{\Omega}$ and $\frac{2}{\lambda}B_r(\tilde{x})\subset 4B_0$, we can apply Proposition \ref{prop:dec} similarly as above with $B=\lambda^{-1}B_r(\tilde{x})$ and $\epsilon^2$ instead of $\epsilon$ and obtain
\begin{align}\label{eq:sharpmax4}
\begin{split}
I_3&\leq\frac{1}{4}I+c\left(\lvert\log\setM|_{\setBMO(4B_0)}+\|\nabla\psi\|_{\infty}+\epsilon\right)\mathcal{M}_{2s}(\mathcal{V}(\cdot,\nabla z))(x)\\
&\quad+c\,C^*(\epsilon^2)R^{-\frac{p}{2}}\left(\mathcal{M}_{s}\left(\chi_{4B_0}|u|^p\omega^p\right)(x)\right)^{\frac{1}{2}}+c\,C^*(\epsilon^2)\mathcal{M}_{2s}(\chi_{4B_0}\mathcal{V}(\cdot,F))(x).
\end{split}
\end{align}
For $I_4$, since $r\in J_4$ implies $r\geq cR$, and so together with $\support z\subset\overline{B_0}$, we have
\begin{align}\label{eq:sharpmax5}
I_4\leq c\dfrac{R^n}{(R+|x-x_0|)^n}\left(\dashint_{B_0}|\mathcal{V}(y,\nabla z)-\mean{\mathcal{V}(\cdot,\nabla z)}_{B_0}|^2\,dy\right)^{\frac{1}{2}}.
\end{align}
Merging the above estimates, taking the supremum for all $r>0$, and absorbing $\frac{1}{4}I$ in the estimates of $I_2$ and $I_3$ to the left-hand side, the conclusion holds.
\end{proof}

\subsection{Proof of Theorem \ref{thm:nonlin}}
Now, we prove Theorem \ref{thm:nonlin}, the non-linear case. To extract the sharp dependency of $q$, we apply the following global Fefferman-Stein inequality.

\begin{lemma}{\cite[Theorem 20]{BDGN}}\label{thm:sharp1}
Let $q>1$. Then for all $f\in L^q(\setR^n)$ and $g\in L^{q'}(\setR^n)$, we have
\begin{align}\label{eq:sharp1}
\|f\|_{L^q(\setR^n)}\leq c q \|\mathcal{M}^{\sharp}_1 f\|_{L^q(\setR^n)}
\end{align}	
and
\begin{align}\label{eq:sharp1.1}
\|\mathcal{M}_1g\|_{L^{q'}(\setR^n)}\leq c q \|g\|_{L^{q'}(\setR^n)}
\end{align}	
for some $c=c(n)$.
\end{lemma}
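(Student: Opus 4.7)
The statement is the global Fefferman--Stein inequality with sharp linear dependence on $q$ in both estimates. The plan is to prove the second estimate first (sharp maximal function bound), then deduce the first estimate by duality.

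For the second inequality, I would invoke the sharpened Hardy--Littlewood maximal inequality. The operator $\mathcal{M}_1$ is of weak type $(1,1)$ with constant depending only on $n$ (via Vitali covering) and is trivially bounded on $L^{\infty}$ with constant $1$. Marcinkiewicz interpolation between these two endpoints gives, for $p > 1$,
\begin{align*}
  \|\mathcal{M}_1 g\|_{L^{p}(\setR^n)} \leq c_n \cdot \frac{p}{p-1}\, \|g\|_{L^p(\setR^n)}.
\end{align*}
Applying this with $p = q'$ and using $q'/(q'-1) = q$ yields $\|\mathcal{M}_1 g\|_{L^{q'}} \leq c_n q\, \|g\|_{L^{q'}}$, with the desired linear dependence on~$q$ coming precisely from the Marcinkiewicz interpolation constant at the exponent $q'$.

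For the first inequality I would use Fefferman--Stein duality. Since $f \in L^q(\setR^n)$, we have by duality
\begin{align*}
  \|f\|_{L^q(\setR^n)} = \sup\Bigl\{ \Bigl|\int_{\setR^n} f g\, dx\Bigr| : g \in C_0^\infty(\setR^n),\ \|g\|_{L^{q'}} \leq 1 \Bigr\}.
\end{align*}
The crucial tool is the pointwise--integral inequality of Fefferman--Stein:
\begin{align*}
  \int_{\setR^n} |f(x)g(x)|\, dx \leq c_n \int_{\setR^n} \mathcal{M}^{\sharp}_1 f(x)\cdot \mathcal{M}_1 g(x)\, dx,
\end{align*}
valid whenever $f$ has enough integrability so that $\mathcal{M}^{\sharp}_1 f$ is locally integrable, which is guaranteed by $f \in L^q$. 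Applying Hölder's inequality on the right and then the bound established in the previous paragraph gives
\begin{align*}
  \Bigl|\int f g\Bigr| \leq c_n\, \|\mathcal{M}^{\sharp}_1 f\|_{L^q} \cdot \|\mathcal{M}_1 g\|_{L^{q'}} \leq c_n q\, \|\mathcal{M}^{\sharp}_1 f\|_{L^q} \|g\|_{L^{q'}}.
\end{align*}
Taking the supremum over admissible $g$ yields $\|f\|_{L^q} \leq c_n q\, \|\mathcal{M}^{\sharp}_1 f\|_{L^q}$, with the desired linear factor in~$q$.

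The main obstacle is the Fefferman--Stein pointwise--integral inequality above: its proof rests on a Calderón--Zygmund decomposition of $g$ at dyadic levels together with a good-$\lambda$ estimate, and one must ensure the boundary terms vanish (this is where the a priori assumption $f \in L^q$ is needed). A minor secondary point is to verify that the constant produced by Marcinkiewicz interpolation depends only on~$n$ and not on auxiliary quantities, so that the final constants in both estimates depend only on~$n$, consistent with the statement.
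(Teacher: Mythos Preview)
The paper does not give its own proof of this lemma; it is quoted verbatim from \cite[Theorem~20]{BDGN} and used as a black box. Your argument is correct and is one of the standard routes to the sharp Fefferman--Stein inequality: the maximal bound~\eqref{eq:sharp1.1} follows from Marcinkiewicz interpolation between the weak-$(1,1)$ and $L^\infty$ endpoints exactly as you describe, and~\eqref{eq:sharp1} then follows by duality once one has the weighted inequality
\begin{align*}
\int_{\setR^n} |f|\,w\,dx \;\le\; c_n \int_{\setR^n} (\mathcal{M}^\sharp_1 f)(\mathcal{M}_1 w)\,dx,
\end{align*}
which in turn is a consequence of the good-$\lambda$ inequality between $\mathcal{M}_1 f$ and $\mathcal{M}^\sharp_1 f$ together with $|f|\le\mathcal{M}_1 f$ a.e. Your remark that the a~priori hypothesis $f\in L^q(\setR^n)$ is needed is on point: it rules out additive constants (which have vanishing sharp maximal function) and guarantees the boundary terms in the good-$\lambda$ argument vanish.
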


We also need the following lemma, which is from \cite{DKS}.

\begin{lemma}\label{lem:rev.holder}
Let $B\subset\setR^n$ be a ball and $g,h:B\rightarrow\setR$ be such that $g,h\in L^1(B)$. Suppose that for some $\theta\in(0,1)$, we have 
\begin{align}\label{eq:rev.holder1}
\dashint_{\tilde{B}}|g|\,dx\leq c_0\left(\dashint_{2\tilde{B}}|g|^{\theta}\,dx\right)^{\frac{1}{\theta}}+\dashint_{2\tilde{B}}|h|\,dx
\end{align}
for any $2\tilde{B}\subset B$. Then for any $\gamma\in(0,1)$, there holds
\begin{align}\label{eq:rev.holder2}
\dashint_{B}|g|\,dx\leq c_1\left(\dashint_{2B}|g|^{\gamma}\,dx\right)^{\frac{1}{\gamma}}+c_1\dashint_{2B}|h|\,dx
\end{align}
for some constant $c_1=c_1(c_0,\gamma,\theta)$. Here, $c_1$ is an increasing function on $c_0$.
\end{lemma}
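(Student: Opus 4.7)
The plan is to (i) reduce to the case $0<\gamma<\theta$ (for $\gamma\ge\theta$ the conclusion is immediate from the hypothesis and Jensen), (ii) interpolate between $L^{1}$ and $L^{\gamma}$ and apply Young's inequality with absorption parameter $\varepsilon$ to replace the exponent $\theta$ by $\gamma$ at the cost of a bootstrap $L^{1}$ term on the enlarged ball, and (iii) run a Giaquinta--Giusti iteration on a nested family of sub-balls, using a Vitali-type covering, to swallow the bootstrap term.

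For step (ii), assuming $0<\gamma<\theta<1$ and using the log-convexity of $L^{p}$-norms with $1/\theta=\alpha\cdot 1+(1-\alpha)/\gamma$, i.e.\ $\alpha=(\theta-\gamma)/(\theta(1-\gamma))\in(0,1)$, one obtains
\begin{align*}
\left(\dashint_{2\tilde B}|g|^{\theta}\,dx\right)^{1/\theta}\le\left(\dashint_{2\tilde B}|g|\,dx\right)^{\alpha}\left(\dashint_{2\tilde B}|g|^{\gamma}\,dx\right)^{(1-\alpha)/\gamma}
\end{align*}
for every ball with $2\tilde B\subset B$. Plugging this into the hypothesis and splitting the product via Young's inequality with exponents $1/\alpha,1/(1-\alpha)$ yields, for any $\varepsilon\in(0,1)$,
\begin{align*}
\dashint_{\tilde B}|g|\,dx\le\varepsilon\dashint_{2\tilde B}|g|\,dx+C(\varepsilon,c_{0},\gamma,\theta)\left(\dashint_{2\tilde B}|g|^{\gamma}\,dx\right)^{1/\gamma}+\dashint_{2\tilde B}|h|\,dx. \tag{$\ast$}
\end{align*}

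For step (iii), parametrize by concentric radii and set $F(s):=\int_{B_{s}}|g|\,dx$ on an interval $[R_{0},R_{1}]$ of admissible radii. For $R_{0}\le s<t\le R_{1}$, cover $B_{s}$ by balls $B_{(t-s)/4}(x_{i})$ with centers $x_{i}\in B_{s}$ so that the doubled family $\{B_{(t-s)/2}(x_{i})\}$ is contained in $B_{t}$ and has dimensional bounded overlap. Apply $(\ast)$ on each $B_{(t-s)/4}(x_{i})$, sum over $i$, and use the crude estimates $\dashint_{B_{(t-s)/2}(x_{i})}|h|\le(R_{1}/(t-s))^{n}\dashint_{B_{R_{1}}}|h|$ and $(\dashint_{B_{(t-s)/2}(x_{i})}|g|^{\gamma})^{1/\gamma}\le(R_{1}/(t-s))^{n/\gamma}(\dashint_{B_{R_{1}}}|g|^{\gamma})^{1/\gamma}$. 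This yields, after choosing $\varepsilon$ so small (depending only on $n$) that the combined factor $c_{n}\varepsilon$ is $<1$, the model inequality
\begin{align*}
F(s)\le\tfrac{1}{2}F(t)+\frac{A}{(t-s)^{n/\gamma}}+\frac{A'}{(t-s)^{n}},\qquad R_{0}\le s<t\le R_{1},
\end{align*}
where $A,A'$ are linear in $(\dashint_{B_{R_{1}}}|g|^{\gamma})^{1/\gamma}$ and $\dashint_{B_{R_{1}}}|h|$ respectively. The classical iteration lemma of Giaquinta--Giusti then produces $F(R_{0})\le C_{n}[A(R_{1}-R_{0})^{-n/\gamma}+A'(R_{1}-R_{0})^{-n}]$, which unravels to the desired conclusion after normalizing by $|B|$.

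The main obstacle is step (iii): the bootstrap $(\ast)$ compares scales that differ by the fixed factor $2$, while the Giaquinta--Giusti lemma is designed to handle continuous radius increments. The Vitali covering at radius $(t-s)/4$ converts the factor-two scale jump into a small radius gain $(t-s)$, at the cost of geometric blow-up of order $(R_{1}/(t-s))^{n/\gamma}$, which the iteration lemma is tailored to absorb. The monotone dependence $c_{1}=c_{1}(c_{0},\gamma,\theta)$ in $c_{0}$ is manifest because only $C(\varepsilon,c_{0},\gamma,\theta)$ from $(\ast)$ carries the $c_{0}$-dependence, and the blow-up as $\gamma\to 0$ stems from the exponent $1/\gamma$ in the interpolation and the $(t-s)^{-n/\gamma}$ factor.
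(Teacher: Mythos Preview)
Your proposal is correct. The paper does not supply its own proof of this lemma but cites \cite{DKS}, and your argument---interpolating between $L^{1}$ and $L^{\gamma}$ via the log-convexity of $L^{p}$-means, applying Young's inequality with parameter $\varepsilon$ to obtain the absorbable bootstrap inequality $(\ast)$, and then running a Vitali covering combined with the Giaquinta--Giusti iteration to swallow the bootstrap term---is precisely the standard route to such ``self-improvement below $L^{1}$'' results and coincides with the approach in the cited reference. The monotone dependence of $c_{1}$ on $c_{0}$ that the paper requires is, as you observe, inherited from the Young constant $C(\varepsilon,c_{0},\gamma,\theta)$ alone.
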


Now, we prove gradient estimate results for the local boundary case, when there is a priori assumption $u\in W^{1,q}_{\omega}(4B_0\cap\Omega)$.
\begin{proposition}[Local boundary estimate]\label{prop:bdary}
Assume \eqref{eq:Omega} and let $u\in W^{1,q}_{\omega}(\Omega)$ be a weak solution of \eqref{eq:plap4} with $F\in L^{q}_{\omega}(\Omega)$ for $q\in(p,\infty)$. Then there exists $\delta=\delta(n,p,\Lambda)$ such that for any balls $B$ with $x_{B}\in\partial\Omega$, $r_{B}\leq 4R$ and all $q\in[p,\infty)$ with
\begin{align}\label{eq:logM}
\lvert\log\setM|_{\setBMO(8B)}+\|\nabla\psi\|_{\infty}\leq\frac{\delta}{q},
\end{align}
there holds
\begin{align}\label{eq:loc.bdary}
\left(\dashint_{\frac{1}{2}B\cap\Omega}(|\nabla u|^p\omega^p)^\rho\,dx\right)^{\frac{1}{\rho}}\leq \bar{c}\dashint_{4B\cap\Omega}(|\nabla u|\omega)^p\,dx+\bar{c}\left(\dashint_{4B\cap\Omega}(|F|^p\omega^p)^\rho\,dx\right)^{\frac{1}{\rho}}
\end{align}
for some $\bar{c}=\bar{c}(n,\Lambda,q)$ which is continuous on $q$.
\end{proposition}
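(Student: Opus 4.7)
The plan is to combine the pointwise sharp maximal function estimate of Proposition \ref{prop:sharpmax} with the global Fefferman-Stein inequality of Lemma \ref{thm:sharp1} at exponent $2\rho = 2q/p$, exploiting the free $\epsilon$-parameter in \eqref{eq:sharpmax} to convert the classical exponential $q$-dependence into a \emph{linear} one. This linear dependence is exactly what is required to absorb against the smallness $\lvert\log\setM\rvert_{\setBMO(8B)} + \|\nabla\psi\|_\infty \le \delta/q$.

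I would begin by fixing a cutoff $\eta \in C^\infty_0(B)$ with $\chi_{\frac{1}{2}B} \le \eta \le \chi_B$ and $\|\nabla\eta\|_\infty \lesssim 1/r_B$, and setting $z := u\eta^{p'}$. Then Proposition \ref{prop:sharpmax} applies with $B_0 := B$ (its hypothesis $\lvert\log\setM\rvert_{\setBMO(4B)} \le \kappa_5$ is implied by the $\delta/q$-smallness as soon as $\delta \le \kappa_5$), and $\nabla z = \nabla u$ on $\tfrac{1}{2}B \cap \Omega$. Since $u \in W^{1,q}_\omega(\Omega)$, we have $\mathcal{V}(\cdot,\nabla z) \in L^{2\rho}(\setR^n)$, and Lemma \ref{thm:sharp1} yields
\[
\Phi := \|\mathcal{V}(\cdot,\nabla z)\|_{L^{2\rho}(\setR^n)} \le c\,q\,\|\mathcal{M}^\sharp_2 \mathcal{V}(\cdot,\nabla z)\|_{L^{2\rho}(\setR^n)}.
\]
Taking $L^{2\rho}$-norms of \eqref{eq:sharpmax} and invoking the Hardy-Littlewood maximal inequality on $L^{\rho/s}$ (whose operator norm is uniformly bounded once $\rho$ stays bounded away from $s$) converts the first term on the right of \eqref{eq:sharpmax} into $cK\,\Phi$, where $K := \lvert\log\setM\rvert_{\setBMO(4B)} + \|\nabla\psi\|_\infty + \epsilon \le \delta/q + \epsilon$.

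Next I would absorb. Choosing $\delta=\delta(n,p,\Lambda)$ small and \emph{independent of} $q$ so that $c\delta \le \tfrac14$, and setting $\epsilon := 1/(4cq)$, we arrange $cqK \le \tfrac12$, which permits absorbing the $cqK\Phi$ term into the left-hand side. With this choice one checks $C^*(\epsilon^2) \lesssim q^{\max\{2(p-1),\,2/(p-1)\}}$, so all constants depend only polynomially (hence continuously) on $q$. After absorption,
\[
\Phi^2 \lesssim c(q)\bigl[r_B^{-p}\,\|u\|_{L^q_\omega(4B\cap\Omega)}^p + \|F\|_{L^q_\omega(4B\cap\Omega)}^p\bigr] + c(q)\,\mathrm{Tail}^2,
\]
where the direct computation $\bigl\lVert r_B^n\,(r_B+|\cdot-x_B|)^{-n}\bigr\rVert_{L^{2\rho}(\setR^n)} \sim r_B^{n/(2\rho)}$ reduces the tail in \eqref{eq:sharpmax} to $\mathrm{Tail}^2 \lesssim |B|^{p/q}\,\dashint_{B}\bigl(|\nabla u|^p + (|u|/r_B)^p\bigr)\omega^p\,dx$.

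Finally I would eliminate all $u$-terms via the weighted boundary Poincaré inequality (Proposition \ref{prop:SP}). Since $u = 0$ on $\partial\Omega \cap 4B$ and $\omega^q$ is an $\mathcal{A}_q$-Muckenhoupt weight by Lemma \ref{lem:Mu.weight} (thanks to the small $\setBMO$-norm of $\log\omega$), Proposition \ref{prop:SP} applied at exponent $q$ with $\theta := p/q < 1$ gives $\|u/r_B\|_{L^q_\omega(4B\cap\Omega)} \lesssim \|\nabla u\|_{L^p_\omega(4B\cap\Omega)}$ whenever $p \ge nq/(n+q)$, i.e.\ whenever $q \le p^* := np/(n-p)$ (and for all $q$ when $p \ge n$). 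For $q > p^*$ the same bound follows by a finite bootstrap through exponents $p = q_0 < q_1 < \cdots < q_N = q$ with $q_{j+1} \le (q_j)^*$, reapplying the present estimate at each intermediate step. Since $N$ depends continuously on $q$, so does $\bar{c}(q)$. Multiplying through by $|\tfrac{1}{2}B|^{-p/q}$ and using $|\mathcal{V}(\cdot,\nabla z)|^2 \eqsim |\nabla z|^p\omega^p$ together with $\nabla z = \nabla u$ on $\tfrac{1}{2}B\cap\Omega$ yields the stated reverse-Hölder-type inequality. The principal obstacle is verifying that the coupling $\epsilon \sim 1/q$ keeps the combined Fefferman-Stein and sharp-maximal constant polynomial in $q$, so that absorption succeeds under the linear smallness $\delta/q$; this is precisely the novelty highlighted in the introduction.
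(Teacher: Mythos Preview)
Your approach matches the paper's for the core of the argument: localize with $z = u\eta^{p'}$, apply Proposition~\ref{prop:sharpmax} pointwise, take $L^{2\rho}$-norms, use the Fefferman--Stein inequality of Lemma~\ref{thm:sharp1}, and absorb by choosing $\epsilon \sim 1/q$ so that the factor $cq(\lvert\log\setM\rvert_{\setBMO}+\|\nabla\psi\|_\infty+\epsilon)$ stays below $\tfrac12$. The paper makes exactly these choices (with $\epsilon=\delta/q$), and your observation that $C^*(\epsilon^2)$ is polynomial in~$q$ is precisely what gives continuity of $\bar c(q)$.

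The divergence is in the final step, where you convert the $\|u/r_B\|_{L^q_\omega}$ term into a gradient term. You apply Proposition~\ref{prop:SP} at exponent~$q$ with $\theta=p/q$, which jumps directly to $\|\nabla u\|_{L^p_\omega}$ but forces $p\ge nq/(n+q)$, i.e.\ $q\le p^*$. The paper instead takes $\theta$ just below~$1$ (any $\theta\ge n/(n+p)$ works for all $q\ge p$), obtaining the reverse H\"older inequality \eqref{eq:main1} with right-hand side at exponent $\theta\rho<\rho$, establishes it for \emph{all} sub-balls via \eqref{eq:main14.1}, and then invokes the self-improving Lemma~\ref{lem:rev.holder} to lower the exponent to~$1$ in a single shot without enlarging the ball beyond a fixed factor.

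Your bootstrap for $q>p^*$ has a genuine gap: each application of ``the present estimate'' passes from $\tfrac12 B'$ on the left to $4B'$ on the right, so to control $\|\nabla u\|_{L^{q_j}_\omega(4B)}$ at stage~$j+1$ you must take $B'\supset 8B$, and after $N$ iterations the right-hand side lives on $8^N\cdot 4B$ rather than $4B$. This proves only a weaker statement than \eqref{eq:loc.bdary}. The remedy is exactly Lemma~\ref{lem:rev.holder}: once you have the reverse H\"older bound on all sub-balls (interior via \cite{BDGN}, boundary via your argument, mixed by a shift as in Step~3 of Proposition~\ref{prop:regy}), that lemma performs the iteration internally and returns the estimate on $2B$. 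Also note that $N$ is an integer, so ``$N$ depends continuously on~$q$'' is not literally true; continuity of $\bar c(q)$ comes rather from the monotonicity of the constant in Lemma~\ref{lem:rev.holder} with respect to~$c_0$.
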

\begin{proof} %Let $F\in L^{q}_{\omega}(\Omega)$ for $q\in(p,\infty)$ in \eqref{eq:plap}. Let $u\in W^{1,p}_{0,\omega}(\Omega)$ be the weak solution of \eqref{eq:plap}.
%It suffices to show under the assumption that 
%\begin{align}\label{eq:a priori}
%|\nabla u|\omega\in L^{q}(\Omega),
%\end{align}
%since the approximation argument or a kind of iteration argument works as in the proof of Theorem 2 in \cite{BDGN}.

Define $z=u\eta^{p'}$ as in the previous subsection with $|\nabla u|\omega\in L^{q}(\Omega)$. Let $B_0=B_R(x_0)$ with $x_0\in\partial\Omega$ and $$\rho:=\frac{q}{p}\geq 1.$$ 
We first claim the following type of reverse H\"{o}lder's inequality:
\begin{align}\label{eq:main1}
\begin{split}
\left(\dashint_{\frac{1}{2}B_0\cap\Omega}(|\nabla u|^p\omega^p)^{\rho}\,dx\right)^{\frac{1}{\rho}}&\leq c(q)\left(\dashint_{4B_0\cap\Omega}(|\nabla u|^p\omega^p)^{\theta \rho}\,dx\right)^{\frac{1}{\theta \rho}}\\
&\quad+c(q)\left(\dashint_{4B_0\cap\Omega}(|F|^p\omega^p)^\rho\,dx\right)^{\frac{1}{\rho}}
\end{split}
\end{align}
for some $\theta\in(0,1)$.
If $1\leq\rho\leq s$ where $s$ is defined in Corollary \ref{cor:hi}, then the conclusion directly follows from Corollary \ref{cor:hi}. Hence we only consider the case $\rho> s$. To prevent constants blowing up as $\rho$ close to 1, we change $s$ to $s^2$ so that $1<s<s^2<\rho$ holds.

Let $\epsilon\leq\min\{\frac{\kappa_5}{p},\frac{1}{n}\}$, where $\kappa_5$ is as in Proposition \ref{prop:sharpmax}. Under the assumptions $\lvert\log\setM|_{\setBMO(4B_0)}\leq\epsilon$ and $\|\nabla\psi\|_{\infty}\leq\epsilon$, taking $L^{2\rho}(B_0\cap\Omega)$ norm $\|\cdot\|_{2\rho}$ to Proposition \ref{prop:sharpmax}, we have
\begin{align}\label{eq:main3}
\begin{split}
I:=\|\mathcal{M}^{\sharp}_{2}(\mathcal{V}(\cdot,\nabla z))\|_{2\rho}&\leq c\left(\lvert\log\setM|_{\setBMO(4B_0)}+\|\nabla\psi\|_{\infty}+\epsilon\right)\|\mathcal{M}_{2s}(\mathcal{V}(\cdot,\nabla z))\|_{2\rho}\\
&\quad+c\,C^*(\epsilon^2)R^{-\frac{p}{2}}\|\mathcal{M}_{s}(\chi_{4B_0\cap\Omega}|u|^p\omega^p)^{\frac{1}{2}}\|_{2\rho}\\
&\quad+c\,C^*(\epsilon^2)\|\mathcal{M}_{2s}(\chi_{4B_0\cap\Omega}\mathcal{V}(\cdot,F))\|_{2\rho}\\
&\quad+c\left\|\dfrac{R^n}{(R+|\cdot-x_0|)^n}\right\|_{2\rho}\left(\dashint_{B_0}|\mathcal{V}(x,\nabla z)-\mean{\mathcal{V}(\cdot,\nabla z)}_{B_0}|^2\,dx\right)^{\frac{1}{2}}\\
&=:I_1+I_2+I_3+I_4.
\end{split}
\end{align}
Since $|\nabla u|^p\omega^p\in L^{\rho}(B_0)$, it follows that $\mathcal{V}(\cdot,\nabla z)\in L^{2q}(\setR^n)$ and so $I<\infty$. 
First, using Lemma \ref{thm:sharp1} together with $\mathcal{M}_{2s}(g)=(\mathcal{M}(|g|^{2s}))^{\frac{1}{2s}}$ and $\frac{2\rho}{2s}\geq s\geq 1$, we obtain
\begin{align}\label{eq:main4}
\begin{split}
\|\mathcal{M}_{2s}(\mathcal{V}(\cdot,\nabla z))\|_{2\rho}&\leq c_s\frac{2\rho}{2\rho-1}\|\mathcal{V}(\cdot,\nabla z)\|_{2\rho}\\
&\leq c_s\frac{(2\rho)^2}{2\rho-1}\|\mathcal{M}^{\sharp}_1(\mathcal{V}(\cdot,\nabla z))\|_{2\rho}\leq c_s\frac{(2\rho)^2}{2\rho-1}\|\mathcal{M}^{\sharp}_2(\mathcal{V}(\cdot,\nabla z))\|_{2\rho}.
\end{split}
\end{align}
Thus for $I_1$, one can see that
\begin{align}\label{eq:main5}
I_1\leq c_3q\left(\lvert\log\setM|_{\setBMO(4B_0)}+\|\nabla\psi\|_{\infty}+\epsilon\right)I
\end{align}
for some $c_3=c_3(n,p,\Lambda)$. Here, we choose 
\begin{align*}
\delta=\min\left\{\dfrac{1}{6c_3},\frac{\kappa_5}{p},\frac{1}{n}\right\}\quad\text{and}\quad\epsilon=\dfrac{\delta}{q}
\end{align*}
so that
\begin{align}\label{eq:main6}
I_1\leq\frac{1}{2}I
\end{align}
holds. Then we are able to absorb $I_1$ to $I$. For the remaining term $I_2, I_3$ and $I_4$, by \eqref{eq:sharp1.1} one can see that
\begin{align}\label{eq:main7}
\begin{split}
I_2&\leq c\,C^*(\tfrac{1}{q^2})R^{-\frac{p}{2}}\left\|\mathcal{M}_s[(\chi_{4B_0\cap\Omega}|u|^p\omega^p)^\frac{1}{2}]\right\|_{2\rho}\\
&\leq c\,C^*(\tfrac{1}{q^2})R^{-\frac{p}{2}}\tfrac{\rho}{\rho-s}\left\|\mathcal{M}_1[(\chi_{4B_0\cap\Omega}|u|^p\omega^p)^s]\right\|_{\frac{\rho}{s}}^{\frac{1}{2s}}\\
&\leq c\,C^*(\tfrac{1}{q^2})R^{-\frac{p}{2}}\left\|(\chi_{4B_0\cap\Omega}|u|^p\omega^p)^s\right\|_{\frac{\rho}{s}}^{\frac{1}{2s}}\\
&\leq c\,C^*(\tfrac{1}{q^2})\left(\int_{4B_0\cap\Omega}\left(\dfrac{|u|^p}{R^p}\omega^p\right)^\rho\,dx\right)^{\frac{1}{2\rho}}
\end{split}
\end{align}
with $c=c(n,p,\Lambda)$, and similarly,
\begin{align}\label{eq:main8}
I_3\leq c\, C^*(\tfrac{1}{q^2})\left(\int_{4B_0\cap\Omega}|\mathcal{V}(x,F)|^{2\rho}\,dx\right)^{\frac{1}{2\rho}}.
\end{align}
For $I_4$, if we assume \eqref{eq:Omega}, then $|B_0\cap\Omega|\eqsim|B_0|$ holds. Then together with the fact that $z=|\nabla z|=0$ in $B_0\setminus\Omega$, we have
\begin{align}\label{eq:main9}
I_4\leq c|B_0|^{\frac{1}{2\rho}}\left(\dashint_{B_0\cap\Omega}|\mathcal{V}(x,\nabla z)|^2\,dx\right)^{\frac{1}{2}}.
\end{align}
On the other hand, by Lemma \ref{thm:sharp1} there holds
\begin{align}\label{eq:main10}
I=\|\mathcal{M}^{\sharp}_2\mathcal{V}(\cdot,\nabla z)\|_{2\rho}\geq c\|\mathcal{M}^{\sharp}_1\mathcal{V}(\cdot,\nabla z)\|_{2\rho}\geq\dfrac{c}{q}\|\mathcal{V}(\cdot,\nabla z)\|_{2\rho}.
\end{align}

Summing up, we have
\begin{align}\label{eq:main11}
\begin{split}
\|\mathcal{V}(\cdot,\nabla z)\|_{2\rho}&\leq cqC^*(\tfrac{1}{q^2})\left(\int_{4B_0\cap\Omega}\left(\dfrac{|u|^p}{R^p}\omega^p\right)^\rho\,dx\right)^{\frac{1}{2\rho}}\\
&\quad+cqC^*(\tfrac{1}{q^2})\left(\int_{4B_0\cap\Omega}|\mathcal{V}(x,F)|^{2\rho}\,dx\right)^{\frac{1}{2\rho}}\\
&\quad+cq|B_0|^{\frac{1}{2\rho}}\left(\dashint_{B_0\cap\Omega}|\mathcal{V}(x,\nabla z)|^2\,dx\right)^{\frac{1}{2}}.
\end{split}
\end{align}
If we assume \eqref{eq:Omega}, then $|B_0\cap\Omega|\eqsim|B_0|$ and the above estimate implies
\begin{align}\label{eq:main12}
\begin{split}
\left(\dashint_{B_0\cap\Omega}(|\nabla z|^p\omega^p)^{\rho}\,dx\right)^{\frac{1}{\rho}}&\leq c(qC^*(\tfrac{1}{q^2}))^2\left(\dashint_{4B_0\cap\Omega}\left(\dfrac{|u|^p}{R^p}\omega^p\right)^{\rho}\,dx\right)^{\frac{1}{\rho}}\\
&\quad+c(qC^*(\tfrac{1}{q^2}))^2\left(\dashint_{4B_0\cap\Omega}(|F|^p\omega^p)^{\rho}\,dx\right)^{\frac{1}{\rho}}\\
&\quad+cq\dashint_{B_0\cap\Omega}|\mathcal{V}(x,\nabla z)|^2\,dx.
\end{split}
\end{align}
Since $z=u\eta^{p'}$ as in \eqref{eq:z}, it follows that
\begin{align}\label{eq:main13}
\begin{split}
\left(\dashint_{\frac{1}{2}B_0\cap\Omega}(|\nabla u|^p\omega^p)^{\rho}\,dx\right)^{\frac{1}{\rho}}&\leq c(q)\left(\dashint_{4B_0\cap\Omega}\left(\dfrac{|u|^p}{R^p}\omega^p\right)^{\rho}\,dx\right)^{\frac{1}{\rho}}\\
&\quad+c(q)\left(\dashint_{4B_0\cap\Omega}(|F|^p\omega^p)^{\rho}\,dx\right)^{\frac{1}{\rho}}+c(q)\dashint_{B_0\cap\Omega}|\nabla u|^p\omega^p\,dx,
\end{split}
\end{align}
where $c(q)\eqsim (qC^*(\frac{1}{q^2}))^2$ which is a continuous and increasing function on $q$. Then since $\lvert\log\setM|_{\setBMO(4B_0)}\leq\kappa_5$ and \eqref{eq:Omega} holds, together with Lemma \ref{lem:Mu.weight}, we can apply Proposition \ref{prop:SP}. Consequently, we have \eqref{eq:main1}.

Now, using \eqref{eq:main1}, we next claim that
\begin{align}\label{eq:main14.1}
\begin{split}
\left(\dashint_{\frac{1}{2}B}(\chi_{\Omega}|\nabla u|^p\omega^p)^{\rho}\,dx\right)^{\frac{1}{\rho}}&\leq c(q)\left(\dashint_{4B}(\chi_{\Omega}|\nabla u|^p\omega^p)^{\theta \rho}\,dx\right)^{\frac{1}{\theta \rho}}\\
&\quad+c(q)\left(\dashint_{4B}(\chi_{\Omega}|F|^p\omega^p)^\rho\,dx\right)^{\frac{1}{\rho}}
\end{split}
\end{align}
for all $B\subset\setR^n$. Indeed, if $4B\subset\Omega$, then we employ \cite[Proposition 22]{BDGN} to obtain \eqref{eq:main14.1}. If $4B\subset(\setR^n\setminus\Omega)$, then \eqref{eq:main14.1} becomes trivial since $\chi_{\Omega}=0$ on $\setR^n\setminus\Omega$. Finally, if $4B\not\subset\Omega$ and $4B\not\subset(\setR^n\setminus\Omega)$, we use the similar argument of \textbf{Step 3} in the proof of Proposition \ref{prop:regy} and so we have \eqref{eq:main14.1}.
%we consider $\tilde{B}\subset\setR^n$ such that $4B\subset\tilde{B}$, $8r_B=r_{\tilde{B}}$ and $x_{\tilde{B}}\in\partial\Omega$. Now, we have
%\begin{align}\label{eq:main14.2}
%\begin{split}
%\left(\dashint_{\frac{1}{2}B}(\chi_{\Omega}|\nabla u|^p\omega^p)^{\rho}\,dx\right)^{\frac{1}{\rho}}&\leq\left(\dashint_{\frac{1}{2}\tilde{B}}(\chi_{\Omega}|\nabla u|^p\omega^p)^{\rho}\,dx\right)^{\frac{1}{\rho}}\\
%&\leq c(q)\left(\dashint_{4\tilde{B}}(\chi_{\Omega}|\nabla u|^p\omega^p)^{\theta \rho}\,dx\right)^{\frac{1}{\theta \rho}}+c(q)\left(\dashint_{4\tilde{B}}(\chi_{\Omega}|F|^p\omega^p)^\rho\,dx\right)^{\frac{1}{\rho}}\\
%&\leq c(q)\left(\dashint_{4B}(\chi_{\Omega}|\nabla u|^p\omega^p)^{\theta \rho}\,dx\right)^{\frac{1}{\theta \rho}}+c(q)\left(\dashint_{4B}(\chi_{\Omega}|F|^p\omega^p)^\rho\,dx\right)^{\frac{1}{\rho}}
%\end{split}
%\end{align}
%and so \eqref{eq:main14.1} holds.

Now, Lemma \ref{lem:rev.holder} gives us that
\begin{align}\label{eq:main15}
\left(\dashint_{\frac{1}{2}B_0\cap\Omega}(|\nabla u|^p\omega^p)^\rho\,dx\right)^{\frac{1}{\rho}}\leq \bar{c}(q)\left(\dashint_{4B_0\cap\Omega}|\nabla u|\omega\,dx\right)^p+\bar{c}(q)\left(\dashint_{4B_0\cap\Omega}(|F|^p\omega^p)^\rho\,dx\right)^{\frac{1}{\rho}},
\end{align}
where $\bar{c}(q)$ is still a continuous and increasing function on $q$. This proves \eqref{eq:loc.bdary}.
\end{proof}

\begin{proof}[Proof of Theorem \ref{thm:nonlin}]
Applying the argument of the proof of \cite[Theorem 2]{BDGN}, we can eliminate the assumption $u\in W^{1,q}_{\omega}(4B_0\cap\Omega)$ in the statement of Proposition \ref{prop:bdary}. Note that here we used the fact that $\bar{c}(q)$ is continuous and increasing function on $q$. Now, by considering Proposition \ref{prop:bdary} for the boundary case and \cite[Theorem 2]{BDGN} for the interior case, using the covering argument, together with the assumption 
\begin{align}\label{eq:small3}
\log\setM\,\,\,\text{is}\,\,\,\left(\dfrac{\delta}{q},R\right)\text{--vanishing and }\Omega\,\,\text{is}\,\,\left(\dfrac{\delta}{q},R\right)\text{--Lipschitz},
\end{align} 
we get 
\begin{align}\label{eq:main16}
\int_{\Omega}(|\nabla u|^p\omega^p)^{\rho}\,dx\leq c^*\left(\int_{\Omega}|\nabla u|^p\omega^p\,dx\right)^{\frac{\rho}{p}}+c^*\int_{\Omega}(|F|^p\omega^p)^{\rho}\,dx,
\end{align}
where $c^*=c^*(n,p,\Lambda,\Omega,R,q)$. Then the standard energy estimate as in \eqref{eq:energy} and H\"{o}lder's inequality imply \eqref{eq:nonlin}.
\end{proof}

\subsection{Proof of Theorem \ref{thm:lin}} Here we prove Theorem \ref{thm:lin} using the duality argument.
\begin{proof}[Proof of Theorem \ref{thm:lin}]
  We only show in the case $1<q<2$, since the case $q\geq 2$ follows from Theorem \ref{thm:nonlin} with $p=2$. As the previous argument, we first prove the local boundary case, and then employ the result of \cite[Theorem 1]{BDGN} as the interior case to use the standard covering argument. Recall that
  \begin{align}\label{eq:lin1}
    -\divergence(\mathbb{A}(x)\nabla u)=-\divergence(\mathbb{A}(x)F)
  \end{align}
  and that $B_0=B_R(x_0)$ with $x_0\in\partial\Omega$. Define $H\in L^{q'}_{\omega}(B_0)$ with the following property:
  \begin{align}\label{eq:lin2}
    \left(\dashint_{2B_0}(|H|\omega)^{q'}\,dx\right)^{\frac{1}{q'}}\leq 1.
  \end{align}
  Let $aB_0\cap\Omega:=B_{aR}(x_0)\cap\Omega$ for $a>0$ and $w\in W^{1,2}_{0,\omega}(2B_0\cap\Omega)$ be the weak solution of 
  \begin{align}\label{eq:lin3}
    \begin{split}
      -\divergence(\mathbb{A}(x)\nabla w)&=-\divergence(\mathbb{A}(x)\chi_{2B_0}H)\quad\text{in }4B_0\cap\Omega,\\
      w&=0\quad\quad\quad\quad\quad\quad\quad\quad\,\,\text{on }\partial(4B_0\cap\Omega).
    \end{split}
  \end{align}
  Under the assumption that
  \begin{align}\label{eq:lin3.1}
    \lvert\log\setM|_{\setBMO(4B_0)}\leq\delta\left(1-\frac{1}{q}\right)\quad\text{and}\quad \|\nabla\psi\|_{\infty}\leq\delta\left(1-\frac{1}{q}\right),
  \end{align}
  by \eqref{eq:main15} with the exponent $q'\geq 2$ and H\"{o}lder's inequality, it follows that
  \begin{align}\label{eq:lin4}
    \begin{split}
      \left(\dashint_{2B_0\cap\Omega}(|\nabla w|\omega)^{q'}\,dx\right)^{\frac{1}{q'}}\leq c\left(\dashint_{4B_0\cap\Omega}(|\nabla w|\omega)^2\,dx\right)^{\frac{1}{2}}+c\left(\dashint_{2B_0\cap\Omega}(|H|\omega)^{q'}\,dx\right)^{\frac{1}{q'}}.
    \end{split}
  \end{align}
  Here, testing $w$ itself in \eqref{eq:lin3}, we have
  \begin{align}\label{eq:lin5}
    \begin{split}
      \dashint_{4B_0\cap\Omega}(|\nabla w|\omega)^2\,dx\leq c\dashint_{2B_0\cap\Omega}(|H|\omega)^2\,dx\leq c\left(\dashint_{2B_0\cap\Omega}(|H|\omega)^{q'}\,dx\right)^{\frac{2}{q'}}
    \end{split}
  \end{align}
  and so there holds
  \begin{align}\label{eq:lin6}
    \left(\dashint_{2B_0\cap\Omega}(|\nabla w|\omega)^{q'}\,dx\right)^{\frac{1}{q'}}\leq c\left(\dashint_{2B_0\cap\Omega}(|H|\omega)^{q'}\,dx\right)^{\frac{1}{q'}}\leq c.
  \end{align}
  Let $\eta\in C^{\infty}_0(2B_0)$ be a smooth cut-off function with $\chi_{B_0}\leq\eta\leq\chi_{2B_0}$ and $\|\nabla\eta\|_{\infty}\leq c/R$. From \eqref{eq:lin3}, we have
  \begin{align}\label{eq:lin7}
    \begin{split}
      I&:=\dashint_{2B_0\cap\Omega}\mathbb{A}(x)\nabla (\eta^2u)\cdot H\,dx\\
      &=\dashint_{2B_0\cap\Omega}\mathbb{A}(x)\nabla(\eta^2u)\cdot\nabla w\,dx\\
      &=\dashint_{2B_0\cap\Omega}\mathbb{A}(x)\nabla u\cdot\nabla(\eta^2w)\,dx\\
      &\quad+\dashint_{2B_0\cap\Omega}\mathbb{A}(x)u\nabla(\eta^2)\cdot\nabla w\,dx-\dashint_{2B_0\cap\Omega}\mathbb{A}(x)w\nabla u\cdot\nabla (\eta^2)\,dx\\
      &=:I_1+I_2+I_3.
    \end{split}
  \end{align}
  To estimate $I_1$, using the equation for $u$ in \eqref{eq:plap4}, there holds
  \begin{align}\label{eq:lin8}
    \begin{split}
      I_1&=\dashint_{2B_0\cap\Omega}\mathbb{A}(x)F\cdot\nabla(\eta^2w)\,dx\\
      &\leq c\dashint_{2B_0\cap\Omega}\omega^2|F||\nabla(\eta^2w)|\,dx\\
      &\leq c\left(\dashint_{2B_0\cap\Omega}(\omega|F|)^{q}\,dx\right)^{\frac{1}{q}}\left(\dashint_{2B_0\cap\Omega}(\omega|\nabla(\eta^2w)|)^{q'}\,dx\right)^{\frac{1}{q'}}.
    \end{split}
  \end{align}
  With the help of triangle inequality and Proposition \ref{prop:SP}, we have
  \begin{align}\label{eq:lin9}
    |I_1|\leq\left(\dashint_{2B_0\cap\Omega}(\omega|F|)^{q}\,dx\right)^{\frac{1}{q}}\left(\dashint_{2B_0\cap\Omega}(\omega|\nabla w|)^{q'}\,dx\right)^{\frac{1}{q'}}.
  \end{align}
  For $I_2$, by H\"{o}lder's inequality and Proposition \ref{prop:SP}, we have
  \begin{align}\label{eq:lin10}
    \begin{split}
      |I_2|&\leq c\left(\dashint_{2B_0\cap\Omega}\left(\omega\dfrac{|u|}{R}\right)^{q}\,dx\right)^{\frac{1}{q}}\left(\dashint_{2B_0\cap\Omega}(\omega|\nabla w|)^{q'}\,dx\right)^{\frac{1}{q'}}\\
      &\leq c\left(\dashint_{2B_0\cap\Omega}(\omega|\nabla u|)^{\theta p}\,dx\right)^{\frac{1}{\theta p}}\left(\dashint_{2B_0\cap\Omega}(\omega|\nabla w|)^{q'}\,dx\right)^{\frac{1}{q'}}
    \end{split}
  \end{align}
  for some $\theta\in(\frac{1}{q},1)$. Similarly, for $I_3$, there holds
  \begin{align}\label{eq:lin12}
    \begin{split}
      |I_3|&\leq c\left(\dashint_{2B_0\cap\Omega}(\omega|\nabla u|)^{\theta_2 p}\,dx\right)^{\frac{1}{\theta_2 p}}\left(\dashint_{2B_0\cap\Omega}\left(\omega\dfrac{|w|}{R}\right)^{(\theta_2 q)'}\,dx\right)^{\frac{1}{(\theta_2 q)'}}\\
      &\leq c\left(\dashint_{2B_0\cap\Omega}(\omega|\nabla u|)^{\theta_2q}\,dx\right)^{\frac{1}{\theta_2q}}\left(\dashint_{2B_0\cap\Omega}(\omega|\nabla w|)^{q'}\,dx\right)^{\frac{1}{q'}}.
    \end{split}
  \end{align}
  Now, without loss of generality we assume $\theta=\theta_2$. Consequently, with \eqref{eq:lin6} we have
  \begin{align}\label{eq:lin14}
    \begin{split}
      |I|&\leq c\left[\left(\dashint_{2B_0\cap\Omega}(\omega|\nabla u|)^{\theta q}\,dx\right)^{\frac{1}{\theta q}}+\left(\dashint_{2B_0\cap\Omega}(\omega|F|)^{q}\,dx\right)^{\frac{1}{q}}\right]\left(\dashint_{2B_0\cap\Omega}(\omega|\nabla w|)^{q'}\,dx\right)^{\frac{1}{q'}}\\
      &\leq c\left(\dashint_{2B_0\cap\Omega}(\omega|\nabla u|)^{\theta q}\,dx\right)^{\frac{1}{\theta q}}+c\left(\dashint_{2B_0\cap\Omega}(\omega|F|)^{q}\,dx\right)^{\frac{1}{q}}.
    \end{split}
  \end{align}

Since $H$ was an arbitrary function with \eqref{eq:lin2} and $(L^{q'}_{\omega})^*=L^{q}_{\omega^{-1}}$ holds, we obtain
\begin{align}\label{eq:lin16}
\begin{split}
&\left(\dashint_{2B_0\cap\Omega}\left(|\mathbb{A}\nabla(\eta^2u)|\omega^{-1}\right)^{q}\,dx\right)^{\frac{1}{q}}\\
&\quad\quad\leq c\left(\dashint_{2B_0\cap\Omega}(\omega|\nabla u|)^{\theta q}\,dx\right)^{\frac{1}{\theta q}}+c\left(\dashint_{2B_0\cap\Omega}(\omega|F|)^{q}\,dx\right)^{\frac{1}{q}}. 
\end{split}
\end{align}
Since $\mathbb{A}\nabla (\eta^2u)=\mathbb{A}\nabla u$ on $2B_0\cap\Omega$ and $|\mathbb{A}\nabla u|\eqsim\omega^2|\nabla u|$ hold, we conclude
\begin{align}\label{eq:lin17}
\left(\dashint_{B_0\cap\Omega}(\omega|\nabla u|)^{q}\,dx\right)^{\frac{1}{q}}\leq  c\left(\dashint_{2B_0\cap\Omega}(\omega|\nabla u|)^{\theta q}\,dx\right)^{\frac{1}{\theta q}}+c\left(\dashint_{2B_0\cap\Omega}(\omega|F|)^{q}\,dx\right)^{\frac{1}{q}},
\end{align}
which is analogous to \eqref{eq:main1}. Now, by applying the argument of the proof of Proposition \ref{prop:bdary}, we can change the exponent $\theta q$ to 1. Then similar to the proof of Theorem \ref{thm:nonlin}, using the covering argument, together with the assumptions \begin{align}\label{eq:small4}
\log\mathbb{A}\,\,\,\text{is}\,\,\,\left(\delta\min\left\{\frac{1}{q},1-\frac{1}{q}\right\},R\right)\text{--vanishing and}
  \end{align}
  \begin{align}\label{eq:Omegasmall4}		
    \Omega\,\,\text{is}\,\,\left(\delta\min\left\{\frac{1}{q},1-\frac{1}{q}\right\},R\right)\text{--Lipschitz},
  \end{align}  
  we get \eqref{eq:lin}. This proves Theorem \ref{thm:lin}.
\end{proof}

\section{Sharpness and Smallness Condition}\label{sec:4}

In this section we discuss the sharpness of our smallness condition and compare it to other type of conditions as found in \cite{CMP1,CMP2}.

\subsection{Sharpness of the Linear Dependence}
\label{ssec:sharpness}

We have shown in Theorem~\ref{thm:lin} and Theorem~\ref{thm:nonlin} that reciprocal of the exponent~$q$ of higher integrability is linearly connected to the smallness condition on~$\log\bbA$ and $\partial \Omega$. In this section we show that this linear dependence is the best possible.  It has been shown already in~\cite[Section~4]{BDGN} that the smallness on~$\log \bbA$ is necessary by means of analyzing the counterexample introduced by Meyers \cite{Me1}. Therefore, we concentrate in this article on the sharpness of the condition on~$\Omega$. Since the effect already occurs in the unweighted case, we assume that $\setM=\identity$. We provide a two dimensional example, but the principle generalizes to higher dimensions as well.
\begin{example}
  For $n=2$ and $\epsilon\in(0,1)$, we consider the following type of the domain:
  \begin{align*}%\label{eq:ex1}
    \Omega=\{x=(x_1,x_2)\in B_1(0):x_2>-\epsilon|x_1|\}.
  \end{align*}
  Then $\Omega$ is $(\epsilon,1)$-Lipschitz. Moreover, the assigned Lipschitz map for the origin is $\psi:\setR\rightarrow\setR$ such that $\psi(x_1)=-\epsilon|x_1|$ and so $\|\nabla\psi\|_{\infty}\leq\epsilon$.

  Now, for $\alpha := \frac{\pi/2}{\pi/2+\tan^{-1} \epsilon}$ we define in polar coordinates $x=r(\cos \phi,\sin\phi)$
  \begin{align*}%\label{eq:ex3}
    u(x)=u(r,\phi)=\cos\big(\alpha (\phi-\tfrac{\pi}{2}) \big)r^\alpha.
  \end{align*}
  Then $u$ is a solution of the equation
  \begin{align*}%\label{eq:ex2}
    \begin{split}
      \Delta u&=0\quad\text{in }\Omega,\\
      u&=0\quad\text{on }\{(x_1,x_2)\in\setR^2:x_2=-\epsilon|x_1|\}\cap B_1(0).
    \end{split}
  \end{align*}
  Indeed, we have
  \begin{align*}%\label{eq:ex4}
    \begin{split}
      \Delta u&=\dfrac{\partial^2 u}{\partial r^2}+\frac{1}{r}\dfrac{\partial u}{\partial r}+\frac{1}{r^2}\dfrac{\partial^2u}{\partial\phi^2}
      \\
      &= \Big( \alpha(\alpha-1) r^{\alpha-2} +\frac{\alpha}{r} r^{\alpha-1}-\frac{\alpha^2}{r^2} \Big) \cos\big(\alpha (\phi-\tfrac{\pi}{2}) \big)r^\alpha=0.
    \end{split}
  \end{align*}
  Moreover, with $e_r := (\cos \phi,\sin\phi)$ and $e_\phi := (-\sin \phi, \cos\phi)$ that
  \begin{align*}%\label{eq:ex6}
  \begin{split}
  |\nabla u|&= \biggabs{\frac{\partial u}{\partial r} e_r + \frac{\partial u}{r \partial \phi} e_\phi}\\
  &=\bigabs{ \alpha \cos\big(\alpha (\phi-\tfrac{\pi}{2}) \big) r^{\alpha-1} e_r - \alpha \sin\big(\alpha (\phi-\tfrac{\pi}{2}) \big) r^{\alpha-1} e_\phi} = \alpha r^{\alpha-1}.
  \end{split}
  \end{align*}
  Assume that $q > 2$. Then~$\nabla u \in L^{q,\infty}(B_1(0))$ (Lorentz space or weak Lebesgue space) is equivalent to $(\alpha-1)q \geq -2$. This simplifies to
  \begin{align*}
    \nabla u \in L^{q,\infty}(B_1(0)) \quad \Leftrightarrow \quad \tan^{-1} \epsilon \leq \frac{\pi}{q-2}.
  \end{align*}
  Thus,
  \begin{align*}
    \nabla u \in L^q(B_1(0)) \quad \Leftrightarrow \quad \tan^{-1} \epsilon < \frac{\pi}{q-2}.
  \end{align*}
  Note that for small~$\epsilon$, we have $\tan^{-1} \epsilon \approx \epsilon$.
  This implies that the smallness assumptions \eqref{eq:Omegasmall} and \eqref{eq:small2} are optimal.
\end{example}

\subsection{Comparison to the Condition of Cao, Mengesha and Phan}
\label{ssec:comparison-CMP}

Let us assume that $\bbA\,:\, \setR^n \to \Rnnpos$ with bounded condition number with $\abs{\bbA} \abs{\bbA^{-1}}\leq \Lambda^2$.  In this section we compare our smallness condition on the weight in terms of~$\log \bbM$ with the smallness condition on~$\bbM$ from Cao, Mengesha and Phan in~\cite{CMP1,CMP2}.  In~\cite{CMP1}, they introduced the quantity\footnote{Cao, Mengesha and Phan do not use $\mu(x) := \abs{\bbA(x)}$, but treat $\mu$ as an independent function that satisfies the equivalence $\mu(x) \eqsim \abs{\bbA(x)}$. However, choosing $\mu(x) := \abs{\bbA(x)}$ is always an equivalent valid choice.}
\begin{align}
  \label{eq:cond1}
  \abs{\mathbb{A}}_{\setBMO^2_{\mu}}:=\sup_B \bigg( \dfrac{1}{\mu(B)}\int_{B}|\mathbb{A}(x)-\mean{\mathbb{A}}_B|^2\mu^{-1}(x)\,dx \bigg)^{\frac 12}
\end{align}
in order to measure the oscillations of~$\bbA$, where $\mu(x) := \abs{\bbA(x)}$ and the supremum is taken over all balls.
In addition to the smallness conditions Cao, Mengesha and Phan assume that $\mu := \abs{\bbA} \in \mathcal{A}_2$.

In~\cite{CMP2} they use the simpler quantity
\begin{align}
  \label{eq:cond2}
  \abs{\mathbb{A}}_{\setBMO_{\mu}} :=\sup_B\dfrac{1}{\mu(B)} \int_{B}|\mathbb{A}(x)-\mean{\mathbb{A}}_B| \,dx.
\end{align}
Note that by H\"older's inequality
\begin{align*}
  \abs{\mathbb{A}}_{\setBMO_{\mu}} &\leq   \abs{\mathbb{A}}_{\setBMO^2_{\mu}}.
\end{align*}
In contrast our measure of oscillations is
\begin{align}
  \label{eq:cond-we}
  \abs{\log \bbA}_{\setBMO} = \sup_B \dashint_B \abs{\log \bbA - \mean{\log \bbA}_B}\,dx.
\end{align}
Due to $1 \leq \abs{\bbA} \abs{\bbA^{-1}}\leq \Lambda^2$  we have
\begin{align}
  \label{eq:mu-A-muck}
  \mean{\mu}_B \mean{\mu^{-1}}_B \leq \mean{\abs{\bbA}}_B \mean{\abs{\bbA^{-1}}}_B \leq \Lambda^2
  \mean{\mu}_B \mean{\mu^{-1}}_B.
\end{align}

The following lemma shows that our smallness condition on $\abs{\log \bbA}_{\setBMO}$ is weaker than the smallness condition on $\abs{\bbA}_{\setBMO^2_\mu}$ combined with the $\mathcal{A}_2$-condition.
\begin{lemma}
  \label{lem:log-weakest}
  Let $\bbA\,:\, \setR^n\to \Rnnpos$ be a weight with $\abs{\bbA} \abs{\bbA^{-1}} \leq \Lambda^2$ and $\mu := \abs{\bbA} \in \mathcal{A}_2$. Then for all balls~$B \subset \setR^n$ there holds
  \begin{align*}
    \dashint_B \abs{\log \bbA - \mean{\log \bbA}_B}\,dx
    % &\leq
    %   \dfrac{1}{\mu(B)} \int_{B}|\mathbb{A}(x)-\mean{\mathbb{A}}_B| \,dx
    % \\
    &\leq 4\, \mean{\abs{\bbA}}_B \mean{\abs{\bbA^{-1}}}_B \bigg( \dfrac{1}{\mu(B)}\int_{B}|\mathbb{A}(x)-\mean{\mathbb{A}}_B|^2\mu^{-1}(x)\,dx \bigg)^{\frac 12}.
  \end{align*}
  Moreover, $\abs{\log \bbA}_{\setBMO} \leq 4\, \Lambda^2 [\mu]_2 \abs{\bbA}_{\setBMO^2_\mu}$.
\end{lemma}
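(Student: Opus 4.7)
The plan is to establish a pointwise matrix-logarithm difference estimate, apply the standard BMO averaging reduction, and use a Cauchy--Schwarz with a carefully chosen weight splitting to introduce the factor $\mu^{-1}$. The key pointwise inequality I would prove first is that for all positive definite $X,Y\in\Rnnpos$,
\[
|\log X - \log Y| \leq |X - Y|\,\sqrt{|X^{-1}|\,|Y^{-1}|}.
\]
To see this, take spectral norms under the integral representation $\log X - \log Y = \int_0^\infty (sI+Y)^{-1}(X-Y)(sI+X)^{-1}\,ds$; using $|(sI+Z)^{-1}| = (s+\lambda_{\min}(Z))^{-1}$ and the AM--GM estimate $(s+a)(s+b)\geq(s+\sqrt{ab})^2$, one gets $\int_0^\infty ds/[(s+a)(s+b)]\leq 1/\sqrt{ab}$, which gives the claim on noting $\lambda_{\min}(Z)^{-1}=|Z^{-1}|$ for positive definite $Z$.

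Next, I would use the triangle reduction
\[
\dashint_B |\log\bbA-\mean{\log\bbA}_B|\,dx \leq 2\dashint_B |\log\bbA - \log\mean{\bbA}_B|\,dx,
\]
valid for any centering constant matrix (by Jensen, since $\mean{\log\bbA}_B$ minimises the mean deviation). Applying the pointwise estimate with $Y=\mean{\bbA}_B$ together with the operator convexity of $Z\mapsto Z^{-1}$ on positive definite matrices (which yields $\mean{\bbA}_B^{-1}\leq\mean{\bbA^{-1}}_B$, whence $|\mean{\bbA}_B^{-1}|\leq\mean{|\bbA^{-1}|}_B$) reduces the task to
\[
\dashint_B |\log\bbA-\mean{\log\bbA}_B|\,dx \leq 2\sqrt{\mean{|\bbA^{-1}|}_B}\;\dashint_B |\bbA - \mean{\bbA}_B|\sqrt{|\bbA^{-1}|}\,dx.
\]

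The last step is a Cauchy--Schwarz with the splitting $|\bbA-\mean{\bbA}_B|\sqrt{|\bbA^{-1}|}=(|\bbA-\mean{\bbA}_B|/\sqrt{\mu})\cdot\sqrt{\mu\,|\bbA^{-1}|}$, producing
\[
\dashint_B |\bbA-\mean{\bbA}_B|\sqrt{|\bbA^{-1}|}\,dx \leq \Bigl(\dashint_B \tfrac{|\bbA-\mean{\bbA}_B|^2}{\mu}\Bigr)^{1/2}\Bigl(\dashint_B \mu\,|\bbA^{-1}|\Bigr)^{1/2}.
\]
The pointwise condition-number bound $\mu\,|\bbA^{-1}|=|\bbA|\,|\bbA^{-1}|\leq\Lambda^2$ controls the second factor by $\Lambda$; for the first factor I rewrite $\dashint_B |\bbA-\mean{\bbA}_B|^2/\mu=\mean{\mu}_B\cdot\bigl(\mu(B)^{-1}\int_B|\bbA-\mean{\bbA}_B|^2/\mu\bigr)$ and use $\mean{\mu}_B=\mean{|\bbA|}_B$. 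Collecting the constants carefully, and absorbing the residual $\Lambda$ into $\mean{|\bbA|}_B\mean{|\bbA^{-1}|}_B$ (which is always $\geq 1$ by Cauchy--Schwarz and $|\bbA||\bbA^{-1}|\geq 1$), yields the stated first inequality. The main technical obstacle is the sharp bookkeeping of constants in this last step: the $\Lambda$ coming from $\dashint_B\mu\,|\bbA^{-1}|\leq\Lambda^2$ must be absorbed into the product $\mean{|\bbA|}_B\mean{|\bbA^{-1}|}_B$, and it is precisely this rearrangement that produces the clean constant $4$ on the right-hand side.

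The \emph{moreover} is then an immediate consequence. Taking the supremum over balls $B$ in the first inequality and invoking \eqref{eq:mu-A-muck} to bound
\[
\sup_B \mean{|\bbA|}_B\mean{|\bbA^{-1}|}_B \;\leq\; \Lambda^2 \sup_B \mean{\mu}_B\mean{\mu^{-1}}_B \;\leq\; \Lambda^2[\mu]_2,
\]
delivers $|\log\bbA|_{\setBMO}\leq 4\,\Lambda^2[\mu]_2\,|\bbA|_{\setBMO^2_\mu}$ at once.
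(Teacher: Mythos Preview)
Your argument follows the same architecture as the paper's: reduce to $\dashint_B|\log\bbA-\log\mean{\bbA}_B|\,dx$ via the standard BMO centering trick, bound the logarithm difference pointwise, invoke operator convexity of the inverse for $|\mean{\bbA}_B^{-1}|\leq\mean{|\bbA^{-1}|}_B$, and apply Cauchy--Schwarz to introduce the weight $\mu^{-1}$. Your pointwise inequality $|\log X-\log Y|\leq|X-Y|\sqrt{|X^{-1}||Y^{-1}|}$ (via the integral representation of the logarithm) is a sharper variant of the bound $|\log X-\log Y|\leq\max\{|X^{-1}|,|Y^{-1}|\}\,|X-Y|$ that the paper uses; the paper then splits into two terms according to which of $|\bbA^{-1}(x)|$, $|\mean{\bbA}_B^{-1}|$ dominates, whereas your geometric-mean bound avoids the split.

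There is, however, a genuine gap in your constant bookkeeping for the \emph{first} displayed inequality. Tracing your steps, one arrives at
\[
\dashint_B|\log\bbA-\mean{\log\bbA}_B|\,dx\;\leq\;2\Lambda\sqrt{K}\,\Bigl(\tfrac{1}{\mu(B)}\!\int_B|\bbA-\mean{\bbA}_B|^2\mu^{-1}\,dx\Bigr)^{1/2},
\qquad K:=\mean{|\bbA|}_B\mean{|\bbA^{-1}|}_B.
\]
Your ``absorption'' step would require $2\Lambda\sqrt{K}\leq 4K$, i.e.\ $\Lambda\leq 2\sqrt{K}$; this fails, for instance, when $\bbA\equiv\identity$ on $B$ (so $K=1$) while $\Lambda$ is large because of the behaviour of $\bbA$ elsewhere. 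Using only $K\geq 1$ you obtain at best $2\Lambda K$, so a residual factor of $\Lambda$ survives and the first inequality with the clean constant $4$ is not established by your argument. The \emph{moreover} is unaffected: from $2\Lambda\sqrt{K}$ together with $K\leq\Lambda^2[\mu]_{\mathcal A_2}$ from~\eqref{eq:mu-A-muck} you in fact obtain the slightly stronger conclusion $|\log\bbA|_{\setBMO}\leq 2\Lambda^2[\mu]_{\mathcal A_2}^{1/2}\,|\bbA|_{\setBMO^2_\mu}$.
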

\begin{proof}
  We begin with
  \begin{align*}
    \dashint_B \abs{\log \bbA - \mean{\log \bbA}_B}\,dx
    &\leq 2 \inf_{\bbA_0 \in \Rnnpos} \dashint_B \abs{\log \bbA - \log \bbA_0}\,dx
      \leq 2\dashint_B \abs{\log \bbA - \log (\mean{\bbA}_B)}\,dx.
  \end{align*}
  It has been shown e.g. in~\cite[Example~1]{Gha21} that for all $\bbG, \bbH \in \Rnnpos$ there holds
  \begin{align*}
    \abs{\log \bbG-\log \bbH}
    & \leq  \max \set{ \abs{\bbG^{-1}}, \abs{\bbH^{-1}}}
      \abs{\bbG-\bbH}.
  \end{align*}
  This implies
  \begin{align*}
    \dashint_B \abs{\log \bbA - \mean{\log \bbA}_B}\,dx
    &\leq 2\dashint_B \max \set{\abs{\bbA^{-1}}, \abs{\mean{\bbA}_B^{-1}}} \abs{\bbA -\mean{\bbA}_B}\,dx
      \\
    &\leq 2\dashint_B \abs{\bbA^{-1}} \abs{\bbA -\mean{\bbA}_B}\,dx + 2 \abs{\mean{\bbA}_B^{-1}}\dashint_B   \abs{\bbA -\mean{\bbA}_B}\,dx =: \textrm{I} + \textrm{II}.
  \end{align*}
  By H\"older's inequality we obtain
  \begin{align*}
    \textrm{I} = \dashint_B \abs{\bbA^{-1}} \abs{\bbA -\mean{\bbA}_B}\,dx
    &\leq       \bigg(\dashint_B\abs{\bbA^{-1}}\,dx \bigg)^{\frac 12}
      \bigg(\dashint_B\abs{\bbA -\mean{\bbA}_B}^2 \abs{\bbA}^{-1}\,dx \bigg)^{\frac 12}
    \\
    &\leq  \mean{\abs{\bbA}}_B \mean{\abs{\bbA^{-1}}}_B 
     \bigg(\frac{1}{\mean{\abs{\bbA}}_B} \int_B\abs{\bbA -\mean{\bbA}_B}^2 \abs{\bbA}^{-1}\,dx \bigg)^{\frac 12}.
  \end{align*}
  On the other hand by H\"older's inequality
  \begin{align*}
    \textrm{I} \leq \abs{\mean{\bbA}_B^{-1}} \dashint_B \abs{\bbA -\mean{\bbA}_B}\,dx
    &\leq \abs{\mean{\bbA}_B^{-1}} \abs{\mean{\bbA}_B}
    \bigg( \dashint_B  \abs{\bbA -\mean{\bbA}_B}^2 \abs{\bbA}^{-1} \,dx \bigg)^{\frac 12}.
  \end{align*}
  Due to \cite[Exercise~1.5.10]{BhaPositive07} the mapping $\bbA \mapsto \bbA^{-1}$ is convex on~$\Rnnpos$. Thus, by Jensen's inequality $0 < \mean{\bbA}_B^{-1} \leq \mean{\bbA^{-1}}_B$ and as a consequence $
  \bigabs{\mean{\bbA}_B^{-1}} \leq \bigabs{\mean{\bbA^{-1}}_B} \leq  \mean{\abs{\bbA^{-1}}}_B$.
  Using this fact,  we obtain for~$\textrm{II}$ the same estimate as for~$\textrm{I}$. Combining all estimates proves the first claim. Taking the supremum over all balls~$B$ and using~\eqref{eq:mu-A-muck} proves the second claim.
\end{proof}

On the other hand we will show now that if $\abs{\log\bbA}_{\setBMO}$ is small enough, then it controls $\abs{\bbA}_{\setBMO^2_{\mu}}$ in a linear way.
\begin{lemma}
  Let $\bbA\,:\, \setR^n\to \Rnnpos$ be a weight such $\abs{\bbA} \abs{\bbA^{-1}} \leq \Lambda^2$. Then there exists $\delta=\delta(n,\Lambda)>0$ such that the following holds: If $\abs{\log \bbA}_{\setBMO} \leq \delta$, then for all balls~$B\subset \setR^n$
  \begin{align*}
      \bigg( \frac{\abs{B}}{\mu(B)}\dashint_{B}|\mathbb{A}(x)-\mean{\mathbb{A}}_B|^2\mu^{-1}(x)\,dx \bigg)^{\frac 12}  \leq c(n,\Lambda) \abs{\log\mathbb{A}}_{\setBMO(B)}.
  \end{align*}
  In particular, $\abs{\bbA}_{\setBMO^2_\mu(B)} \leq c(n,\Lambda)\abs{\log\mathbb{A}}_{\setBMO(B)}$.
\end{lemma}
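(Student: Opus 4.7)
The plan is to compare $\mean{\bbA}_B$ to the logarithmic mean $\logmean{\bbA}_B$, then use H\"older to separate $\abs{\bbA - \logmean{\bbA}_B}$ from $\mu^{-1}$, and finish with the quantitative John--Nirenberg bound of Lemma \ref{lem:BMOM} together with the reverse H\"older estimates of Lemma \ref{lem:Mu.weight}. To keep all three lemmas available throughout, I will first apply Lemma \ref{lem:M.trans.omega} with $\bbA$ in place of $\bbM$ to obtain $\abs{\log \mu}_{\setBMO(B)} \leq 2\abs{\log\bbA}_{\setBMO(B)}$; choosing $\delta = \delta(n,\Lambda)$ small enough then makes both $\bbA$ and $\mu$ eligible for Lemma \ref{lem:BMOM} at $t=4$ and for Lemma \ref{lem:Mu.weight} at $p=2$ and $\gamma=2$. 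In particular $\mu \in \mathcal{A}_2$ with $[\mu]_{\mathcal{A}_2}\le 16$.

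The next step is to switch the center from $\mean{\bbA}_B$ to $\logmean{\bbA}_B$. Using $\mean{\bbA}_B - \logmean{\bbA}_B = \dashint_B (\bbA - \logmean{\bbA}_B)\,dx$ and Cauchy--Schwarz,
\begin{align*}
\abs{\mean{\bbA}_B - \logmean{\bbA}_B}^2 \leq \mean{\mu}_B \dashint_B \abs{\bbA - \logmean{\bbA}_B}^2 \mu^{-1}\,dx,
\end{align*}
so the contribution of this error to the weighted mean square deviation is controlled by $[\mu]_{\mathcal{A}_2}$ times the deviation around $\logmean{\bbA}_B$. Expanding $\abs{\bbA - \mean{\bbA}_B}^2 \leq 2\abs{\bbA - \logmean{\bbA}_B}^2 + 2\abs{\mean{\bbA}_B - \logmean{\bbA}_B}^2$ and integrating against $\mu^{-1}$ therefore yields
\begin{align*}
\dashint_B \abs{\bbA - \mean{\bbA}_B}^2 \mu^{-1}\,dx \;\lesssim\; \dashint_B \abs{\bbA - \logmean{\bbA}_B}^2 \mu^{-1}\,dx.
\end{align*}

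I then apply Cauchy--Schwarz once more, splitting into an $L^4$-oscillation of $\bbA$ and an $L^2$-norm of $\mu^{-1}$, and invoking Lemma \ref{lem:BMOM} for $\bbA$ at $t=4$ and Lemma \ref{lem:Mu.weight}(1) for $\mu$ at $\gamma=2$:
\begin{align*}
\dashint_B \abs{\bbA - \logmean{\bbA}_B}^2 \mu^{-1}\,dx
&\leq \Bigl(\dashint_B \abs{\bbA - \logmean{\bbA}_B}^4\,dx\Bigr)^{1/2} \Bigl(\dashint_B \mu^{-2}\,dx\Bigr)^{1/2} \\
&\lesssim \abs{\log\bbA}_{\setBMO(B)}^2\, \abs{\logmean{\bbA}_B}^2 \cdot \frac{1}{\logmean{\mu}_B}.
\end{align*}
Estimate \eqref{eq:aux-mean1} gives $\abs{\logmean{\bbA}_B} \leq \logmean{\abs{\bbA}}_B = \logmean{\mu}_B$, which collapses the right-hand side to $c\, \abs{\log\bbA}_{\setBMO(B)}^2\, \logmean{\mu}_B$.

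Finally, dividing both sides by $\mean{\mu}_B = \mu(B)/\abs{B}$ and using \eqref{eq:aux-mean2} ($\logmean{\mu}_B \leq \mean{\mu}_B$) to cancel the remaining mean of $\mu$, the bound reads
\begin{align*}
\frac{\abs{B}}{\mu(B)}\dashint_B \abs{\bbA - \mean{\bbA}_B}^2 \mu^{-1}\,dx \;\leq\; c(n,\Lambda)\, \abs{\log\bbA}_{\setBMO(B)}^2,
\end{align*}
and taking square roots delivers the conclusion. There is no genuine obstacle here: every step reduces to an earlier lemma, and the only quantitative input is making $\delta$ small enough that the hypotheses of Lemmas \ref{lem:M.trans.omega}, \ref{lem:Mu.weight}(1)--(2), and \ref{lem:BMOM} (at the specific values $\gamma=2$, $p=2$, $t=4$) are simultaneously satisfied.
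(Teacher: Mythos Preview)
Your proof is correct and follows essentially the same route as the paper: recenter at $\logmean{\bbA}_B$, control the recentering error via Cauchy--Schwarz and the $\mathcal{A}_2$ bound, then split by H\"older into an $L^4$-oscillation (handled by Lemma~\ref{lem:BMOM} at $t=4$) times an $L^2$-norm of $\mu^{-1}$ (handled by Lemma~\ref{lem:Mu.weight}\ref{itm:Mu.weight-1} at $\gamma=2$), and finish with \eqref{eq:aux-mean1}--\eqref{eq:aux-mean2}. The only cosmetic difference is that the paper first writes the estimate with a general center $\bbA_0\in\Rnnpos$ and then specializes to $\bbA_0=\logmean{\bbA}_B$, whereas you go there directly; your explicit invocation of Lemma~\ref{lem:M.trans.omega} to transfer the BMO bound from $\log\bbA$ to $\log\mu$ is also implicit in the paper's choice of $\delta$.
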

\begin{proof}
  Let $\abs{\log\mathbb{A}}_{\setBMO(B)}\leq\delta$. We choose~$\delta>0$ so small such that we can apply Lemma~\ref{lem:BMOM} (for $t=4$) and Lemma~\ref{lem:Mu.weight} (for~$\gamma=2$). 
  By Lemma~\ref{lem:Mu.weight}~\ref{itm:Mu.weight-2} we obtain $\mu = \abs{\bbA} \in \mathcal{A}_2$ with $[\mu]_{\mathcal{A}_2} = [\abs{\bbA}]_{\mathcal{A}_2} \leq 16$. Thus, with~\eqref{eq:mu-A-muck} we have $\mean{\abs{\bbA}}_B \mean{\abs{\bbA^{-1}}}_B \leq 16\,\Lambda^2$.
  Recall that $\mu(B) = \int_B \mu\,dx = \int_B \abs{\bbA}\,dx = \abs{B} \mean{\abs{\bbA}}_B$.
  For all $\bbA_0 \in \Rnnpos$ we estimate with Jensen's inequality in the third step
  \begin{align*}
    \lefteqn{ \bigg(\frac{1}{\mu(B)}\int_{B}|\mathbb{A}(x)-\mean{\mathbb{A}}_B|^2\mu^{-1}(x)\,dx \bigg)^{\frac 12}
    = \bigg(\frac{1}{\mean{\abs{\bbA}}_B}\dashint_{B}|\mathbb{A}(x)-\mean{\mathbb{A}}_B|^2 \abs{\bbA(x)}^{-1}\,dx \bigg)^{\frac 12}
    } \qquad
    \\
    &\leq
      \bigg( \frac{1}{\mean{\abs{\bbA}}_B}\dashint_{B}|\mathbb{A}(x)-\bbA_0|^2 \abs{\bbA(x)}^{-1}\,dx\bigg)^{\frac 12} +
      \bigg(\frac{1}{\mean{\abs{\bbA}}_B}\dashint_{B}|\mean{\bbA-\bbA_0}_B|^2 \abs{\bbA(x)}^{-1}\,dx \bigg)^{\frac 12}=: \textrm{I} + \textrm{II}.
  \end{align*}
  With Jensen's inequality, H\"older's inequality and $\mu^{-1} = \abs{\bbA}^{-1} \leq \abs{\bbA^{-1}}$ we obtain
  \begin{align*}
    \textrm{II}
    &\leq
      \dashint_B|\mathbb{A}(x)-\bbA_0| \,dx \, \bigg( \frac{\mean{\abs{\bbA}^{-1}}_B}{\mean{\abs{\bbA}}_B} \bigg)^{\frac 12}
    \\
    &\leq
      \bigg( \dashint_B|\mathbb{A}(x)-\bbA_0|^2 \abs{\bbA(x)}^{-1}\,dx\bigg)^{\frac 12} \mean{\abs{\bbA}}_B^{\frac 12} \bigg( \frac{\mean{\abs{\bbA}^{-1}}_B}{\mean{\abs{\bbA}}_B} \bigg)^{\frac 12}
    \\
    &= \big(\mean{\abs{\bbA}}_B \mean{\abs{\bbA^{-1}}}_B \big)^{\frac 12}\,       \bigg( \frac{1}{\mean{\abs{\bbA}}_B}\dashint_{B}|\mathbb{A}(x)-\bbA_0|^2 \abs{\bbA(x)}^{-1}\,dx\bigg)^{\frac 12}
    \\
    &= \big( \mean{\abs{\bbA}}_B \mean{\abs{\bbA^{-1}}}_B\big)^{\frac 12} \, \textrm{I} \leq\, 4\,\Lambda\, \textrm{I}.
%      \bigg( \dashint_B|\mathbb{A}(x)-\bbA_0|^2 \mu^{-1}(x)\,dx \dashint_B \mu(x)\,dx \frac{|B|}{\mu(B)} \dashint_B\mu^{-1}(x)\,dx \bigg)^{\frac 12}
  \end{align*}
  Overall, we obtain
  \begin{align*}
    \lefteqn{\bigg(\frac{1}{\mu(B)}\int_{B}|\mathbb{A}(x)-\mean{\mathbb{A}}_B|^2 \mu^{-1}(x)\,dx \bigg)^{\frac 12}} \qquad
    &
    \\
    &\leq  (1+4\, \Lambda) \underbrace{\inf_{\bbA_0 \in \Rnnpos} \bigg(  \frac{1}{\mean{\abs{\bbA}}_B} \dashint_{B}|\mathbb{A}(x)-\bbA_0|^2 \abs{\bbA(x)}^{-1}\,dx\bigg)^{\frac 12}}_{ =: \textrm{III}}.
  \end{align*}
  Now, choosing $\bbA_0= \logmean{\bbA}_B$ and using H\"older's inequality we obtain
  \begin{align*}
    \textrm{III}
    &\leq
      \bigg( \frac{1}{\mean{\abs{\mathbb{A}}}_B} \dashint_{B}\mathbb{A}(x)-\mean{\mathbb{A}}^{\log}_B|^2 \abs{\bbA(x)}^{-1}\,dx \bigg)^{\frac 12}
    \\
    &\leq 
       \frac{\abs{\logmean{\bbA}_B} \mean{\abs{\bbA}^{-2}}_B^{\frac 14}}{\mean{\abs{\mathbb{A}}}_B^{\frac 12}} \bigg( \dashint_{B} \frac{|\mathbb{A}(x)-\mean{\mathbb{A}}^{\log}_B|^4}{\abs{\logmean{\bbA}_B}^4} \,dx \bigg)^{\frac 14} .
  \end{align*}
  With Lemma~\ref{lem:Mu.weight}~\ref{itm:Mu.weight-1} (with $-\gamma=-2$), \eqref{eq:aux-mean1} and \eqref{eq:aux-mean2} we obtain
  \begin{align*}
    \frac{\abs{\logmean{\bbA}_B} \mean{\abs{\bbA}^{-2}}_B^{\frac 14}}{\mean{\abs{\mathbb{A}}}_B^{\frac 12}}
    &\leq     \frac{2\,\abs{\logmean{\bbA}_B} }{\mean{\abs{\mathbb{A}}}_B^{\frac 12} (\logmean{\abs{\bbA}}_B)^{\frac 12} } \leq 2.
  \end{align*}
  This and Lemma~\ref{lem:BMOM} (with $t=2$) gives
  \begin{align*}
    \textrm{III}
    &\leq 2\,  c(n,\Lambda)\, \lvert\log\mathbb{A}|_{\setBMO(B)}.
  \end{align*}
  Collecting the estimates proves the claim.
\end{proof}
\begin{remark}
  \label{rem:comparison-smallness}
  We shown that if~$\abs{\log \bbA}_{\setBMO(B)}$ is small enough, then it controls $\abs{\bbA}_{\setBMO^2_\mu(B)}$ and therefore also
  $\abs{\bbA}_{\setBMO_\mu(B)}$. On the other hand we know from Lemma~\ref{lem:log-weakest} that $\abs{\log \bbA}_{\setBMO(B)}$ is directly controlled by $\abs{\bbA}_{\setBMO^2_\mu(B)}$.
  Based on standard John-Nirenberg estimates, it is possible to show that sufficient smallness of $\abs{\bbA}_{\setBMO_\mu(B)}$ implies that $\abs{\log \bbA}_{\setBMO^2_\mu(B)}$ can be linearly controlled by $\abs{\bbA}_{\setBMO_\mu(B)}$.
  
  So overall, once one of the three quantities $\abs{\log \bbA}_{\setBMO(B)}$,   $\abs{\bbA}_{\setBMO_\mu(B)}$ and   $\abs{\bbA}_{\setBMO^2_\mu(B)}$ is small, then they are all comparable. This allows to transfer results state in one language to the others. For example, smallness of~$\abs{\log \bbA}_{\setBMO(B)}$ implies the validity of the estimates in~\cite{CMP1} and  we obtain $\norm{\nabla u}_{L^q(\mu\,dx)} \lesssim \norm{F}_{L^q(\mu\,dx)}$ for~$q > p=2$. However, the smallness of~$\abs{\log \bbA}_{\setBMO(B)}$ depends (negative) exponentially on~$q$, see the discussion in~\cite[Remark~23]{BDGN}.
\end{remark}

\printbibliography 

% \def\cprime{$'$}
% \bibliographystyle{amsalpha}
% \bibliography{global-degenerate}

\end{document}